\newcommand{\rest}{\upharpoonright}
\newcommand{\mathcolon}{\colon\,} %% Hovey uses for maps, like f: A -> B
\newcommand{\rmod}[1]{\mbox{\rm{Mod}--}{#1}}
\DeclareMathOperator{\lse}{G}
\DeclareMathOperator{\lsm}{S}
\newtheorem{example}{Example}
\title{Periodic modules and acyclic complexes}
\author[S. Bazzoni]{Silvana Bazzoni}
\address[Silvana Bazzoni]{%
Dipartimento di Matematica \\
Universit\`a di Padova \\
    Via Trieste 63, 35121 Padova (Italy)}
\thanks{The first named autor is partially supported by grants BIRD163492 and DOR1690814 of
  Padova University}
\email{bazzoni@math.unipd.it}
\author[M. Cort\'es Izurdiaga ]{Manuel Cort\'es Izurdiaga}
\address[Manuel Cort\'es Izurdiaga]{%
Departamento de Did\'actica de las Matem\'aticas, Did\'actica de las
Ciencias Sociales y de las Ciencias Experimentales \\
Universidad de M\'alaga \\
 29071 M\'alaga (Spain)}
\thanks{The second named author is partially supported by grants
  MTM2014-54439 and MTM2016-77445-P of Ministerio de Econom\'{\i}a,
  Industria y Competitividad}
\email{mizurdiaga@uma.es}
\author[S. Estrada ]{Sergio Estrada.}
\address[Sergio Estrada]{%
Departamento de Matem\'aticas \\
Universidad de Murcia \\
 Campus de Espinardo, 30100 Murcia (Spain)}
\thanks{The third named author is partially supported by grants
  18934/JLI/13 of Fundaci\'on S\'eneca-Agencia de Ciencia y
  Tecnolog\'{\i}a de la Regi\'on de Murcia in the framework of III
  PCTRM 2011-2014, MTM2016-77445-P of Ministerio
  de Econom\'{\i}a, Industria y Competitividad and FEDER funds}
\email{sestrada@um.es}
\newcommand{\dd}{\colon}
\newcommand{\bbZ}{\mathbb{Z}}
\newcommand{\Hom}{\operatorname{Hom}}
\newcommand{\Ext}{\operatorname{Ext}}
\newcommand{\Ker}{\operatorname{Ker}}
\newcommand{\Img}{\operatorname{Im}}
\newcommand{\A}{\mathcal{A}}
\newcommand{\B}{\mathcal{B}}
\newcommand{\C}{\mathcal{C}}
\newcommand{\D}{\mathcal{D}}
\newcommand{\E}{\mathcal{E}}
\newcommand{\G}{\mathcal{G}}
 \newcommand{\Ch}{\mathrm{Ch}}
\newcommand{\tilclass}[1]{\widetilde{#1}}
\newcommand{\Modr}[1]{\mathrm{Mod}\textrm{-}{#1}}
\newcommand{\Flat}{\mathrm{Flat}}
\newcommand{\Cot}{\mathrm{Cot}}
\newcommand{\Inj}{\mathrm{Inj}}
\newcommand{\PInj}{\mathrm{PInj}}
\newcommand{\Proj}{\mathrm{Proj}}
\newcommand{\PProj}{\mathrm{PProj}}
\newcommand{\ModR}{\mathrm{Mod}\textrm{-}R}
\newcommand{\add}{\mathrm{add}}
\newcommand{\Add}{\mathrm{Add}}
\DeclareMathOperator{\ex}{ex}
\DeclareMathOperator{\dw}{dw}
\DeclareMathOperator{\dg}{dg}
\DeclareMathOperator{\Z}{Z}
\DeclareMathOperator{\Abs}{Abs}
\DeclareMathOperator{\FPD}{FPD}
\DeclareMathOperator{\Min}{Min}
\DeclareMathOperator{\FID}{FID}
\theoremstyle{plain}
\newtheorem{thm}{Theorem}[section]
\newtheorem{lem}[thm]{Lemma}
\newtheorem{prop}[thm]{Proposition}
\newtheorem{cor}[thm]{Corollary}
\newtheorem{ques}[thm]{Question}
\theoremstyle{definition}
\newtheorem{defn}[thm]{Definition}
\theoremstyle{remark}
\newtheorem{rem}[thm]{Remark}
\begin{document}

\keywords{periodic $\C$-module, pure $\C$-periodic module, locally
  split short exact sequence, hereditary cotorsion pair,
  acyclic complex}

\subjclass[2010]{16D90; 16E05: 16D40; 16D50; 18G25; 18G35}
\begin{abstract}
  We study the behaviour of modules $M$ that fit into a short exact
  sequence $0\to M\to C\to M\to 0$, where $C$ belongs to a class of
  modules $\C$, the so-called $\C$-periodic modules. We find a rather
  general framework to improve and generalize some well-known results
  of Benson and Goodearl and Simson. In the second part we will
  combine techniques of hereditary cotorsion pairs and
  presentation of direct limits, to conclude, among other
  applications, that if $M$ is any module and $C$ is cotorsion, then
  $M$ will be also cotorsion. This will lead to some meaningful
  consequences in the category $\Ch(R)$ of unbounded chain complexes
  and in Gorenstein homological algebra. For example we show that
  every acyclic complex of cotorsion modules has cotorsion cycles, and
  more generally, every map $F\to C$ where $C$ is a complex of
  cotorsion modules and $F$ is an acyclic complex of flat cycles, is
  null-homotopic. In other words, every complex of cotorsion modules
  is dg-cotorsion.
\end{abstract}

\maketitle

% -----------------------------------------------------------------------------
% The table of contents
\setcounter{tocdepth}{1}
% \tableofcontents

% ------------------------------------------------------------------------------
% The document body

\section*{Introduction}

Throughout this paper $R$ is an associative ring with identity and all
modules will be right $R$-modules.
 
The goal of this work is the study of periodic and pure periodic
modules with respect to an arbitrary class of modules $\C$. More
precisely, one of the main objectives we pursue is to know when
$\C$-periodic modules (resp. pure $\C$-periodic modules) are trivial,
where an $R$-module $M$ is called \emph{$\C$-periodic}
(resp. \emph{pure $\C$-periodic}) if it fits into an exact sequence
(resp. into a pure exact sequence) of the form
$0\to M\to C\to M\to 0$, with $C\in \C$, and it is called
\emph{trivial} if it belongs to $\C$.  The origin of this problem
comes from the celebrated result by Benson and Goodearl \cite[Theorem
2.5]{BG} in which they show that each flat $\Proj$-periodic module is
trivial (here $\Proj$ denotes the class of all projective modules). It
is then easy to observe that Benson and Goodearl statement can be
reformulated to saying that each pure $\Proj$-periodic module is
trivial. This is because $M$ is always flat in each pure short exact
sequence of the form $0\to M\to P\to M\to 0$, with $P$ projective.

This module-theoretic property has a remarkable consequence at the
level of chain complexes of modules: every acyclic complex of
projective modules with flat cycles is contractible. This connection
between flat $\Proj$-periodic modules and acyclic complexes of
projective modules was firstly observed by Christensen and Holm
\cite{CH} and entitled them to find the module-theoretic proof
conjectured by Neeman in \cite[Remark 2.15]{Nee08} to get the
aforementioned result on acyclic complexes of projective modules with
flat cycles (Neeman already gives a proof of this fact by using
homotopy techniques). In 2002 Simson \cite{Sim02} realized that Benson
and Goodearl's theorem can be established in the pure setting of a
finitely accesible Grothendieck category, by showing that if
$M$ fits into a pure exact sequence $0\to M\to T\to M\to 0$, with $T$
pure projective (i.e. a direct summand of a direct sum of finitely
presented objects), then $M$ itself is pure projective. In other
words, every pure $\PProj$-periodic module is trivial ($\PProj$ is the
class of all pure projective modules).

We devote the first part of this paper to show that both Benson and
Goodearl and Simson results are encoded under the following rather
more general statement (see Theorem \ref{t:SplittingSequence},
Corollary \ref{c:SplittingSequence}(1) and (2)):

\begin{thm}\label{Intr.Thm 1}
  Any short exact sequence
  $0\to M\hookrightarrow G\stackrel{g}{\to} M\to 0$ in which $g$ is
  locally split and $G$ is a direct sum of countably generated modules
  is split.
\end{thm}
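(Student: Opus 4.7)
The plan is to construct a global section of $g$ by a two-level transfinite induction. Writing $G = \bigoplus_{i \in I} G_i$ with each $G_i$ countably generated and well-ordering $I$, I would build a continuous chain $0 = H_0 \subseteq H_1 \subseteq \cdots \subseteq H_\lambda = G$ of direct summands of $G$, exhausting $G$, together with sections $s_\alpha \colon g(H_\alpha) \to H_\alpha$ of $g|_{H_\alpha}$ compatible with the inclusions. The terminal section $s_\lambda \colon M \to G$ is then the required splitting of the original sequence.

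The essential ingredient is a \emph{countable extension lemma}: given a direct summand $H$ of $G$ equipped with a section $s$ of $g|_H$, and any countable subset $X \subseteq G$, there exists a countably generated submodule $N \subseteq G$ such that $H \oplus N$ is a direct summand of $G$ containing $X$ and admits a section of $g|_{H \oplus N}$ extending $s$. I would prove this by an inner back-and-forth of length $\omega$, alternating two moves: (a) use the locally split hypothesis to lift finitely many new elements of the current image $g(H \oplus -)$ back to preimages in $G$, thereby extending the section on larger and larger finite subsets of the image; and (b) apply Kaplansky's theorem to the decomposition $G = \bigoplus G_i$ to absorb the currently accumulated countable submodule into a countably generated submodule $N$ for which $H \oplus N$ remains a direct summand of $G$. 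After $\omega$ rounds the union stabilises to a countably generated direct summand carrying a compatible section.

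At limit stages of the outer induction, I would put $H_\alpha = \bigcup_{\beta < \alpha} H_\beta$ and $s_\alpha = \bigcup_{\beta < \alpha} s_\beta$; unions of continuous chains of direct summands of a direct sum of countably generated modules remain direct summands (again by Kaplansky), and the sections glue by compatibility. The main obstacle is precisely the countable extension lemma, where one has to juggle three simultaneous constraints — capturing the prescribed new elements of $X$, remaining a direct summand of the whole $G$, and extending the already chosen section $s$ — and verify that the $\omega$-step back-and-forth actually closes without forcing one to restart. Once this lemma is in place, the outer transfinite induction runs routinely: at successor stage $\alpha+1$, apply the lemma to $H_\alpha$, $s_\alpha$, and a countable subset of $G$ containing the next $G_i$ not yet covered, producing the extended $H_{\alpha+1}$ and $s_{\alpha+1}$.
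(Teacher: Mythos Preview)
Your architecture matches the paper's: an outer transfinite induction building a continuous chain of subsums $H_\alpha=\bigoplus_{i\in I_\alpha}G_i$ together with compatible splittings, and an inner countable back-and-forth at each successor step. You also correctly identify the countable extension lemma as the crux. But the most delicate point---how to \emph{extend} the given section $s\colon g(H)\to H$ to a section $s'\colon g(H\oplus N)\to H\oplus N$---is not explained, and your suggested mechanism does not work as stated. ``Extending the section on larger and larger finite subsets of the image'' via local splittings does not assemble into a module homomorphism: the local maps $h\colon M\to G$ furnished by local splitness for different finite sets are mutually incompatible and need not agree with $s$ on $g(H)$; moreover $g(H\oplus N)$ is not countably generated in general (only the quotient $g(H\oplus N)/g(H)$ is), so the standard ``locally split onto a countably generated target implies split'' argument does not apply directly. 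The paper's solution is to pass to quotients: one checks that the induced sequence $0\to M/M_\mu\to G/G_\mu\to M/M_\mu\to 0$ is again of the same shape with a locally split epimorphism, runs the countable back-and-forth \emph{there} to obtain a fresh splitting in the quotient, and then uses a short diagram-chasing lemma to lift that splitting to one on $G_{\mu+1}$ compatible with the splitting already chosen on $G_\mu$. This quotient passage is the missing idea in your sketch.

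A secondary but genuine error: the assertion that ``unions of continuous chains of direct summands of a direct sum of countably generated modules remain direct summands (again by Kaplansky)'' is false and is not what Kaplansky's theorem says. For instance, in $G=\mathbb Z^{(\omega)}$ with basis $(e_k)$, the submodules $H_n=\langle e_1+2e_2,\dots,e_n+2e_{n+1}\rangle$ form an increasing chain of direct summands whose union has quotient $\mathbb Z[1/2]$, hence is not a summand. The paper avoids this trap by constructing each $H_\alpha$ explicitly as a subsum $\bigoplus_{i\in I_\alpha}G_i$ (which is what your absorption step would also yield), and by proving a separate lemma giving sufficient conditions for a union of a continuous chain of summands to be a summand. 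You should make the subsum form of $H_\alpha$ explicit rather than appealing to Kaplansky.
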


Aside from the preceding particular instances of this Theorem, we also
get (Corollary \ref{c:SplittingSequence}) that each pure
$\Add(\mathcal P)$-periodic module is trivial (here $\mathcal P$
denotes a class of finitely presented modules and $\Add(\mathcal P)$
is the class of direct summands of direct sums of modules in
$\mathcal P$).

Now, if we think on a flat module $F$ as such that every short exact
sequence $0\to M\to N\to F\to 0$ is pure, we immediately realize that
the dual notion of flat module is that of absolutely pure
(=FP-injective) module (i.e. a module $E$ such that each exact
sequence of modules $0\to E\to L\to T\to 0$ is pure). Thus it seems
natural to wonder whether or not the dual version of Benson and
Goodearl's theorem holds. Namely, is every
$\Inj$-periodic absolutely pure module injective? ($\Inj$ is the class of all
injective modules). Or even more generally, is every pure
$\PInj$-periodic module trivial? (here $\PInj$ stands for the category
of pure injective modules, i.e. modules $L$ such that each pure short
exact sequence $0\to L\to A\to B\to 0$ splits).  Recently, the answer
to this question has been positively settled by
{\v{S}}{\v{t}}ov{\'{\i}}{\v{c}}ek \cite[Corollary 5.5]{Stopurity} by
using complete cotorsion pairs in the category of unbounded chain
complexes.

Thus, at this point, we know that every pure $\PProj$-periodic module
is trivial and that every pure $\PInj$-periodic module is also
trivial. For the global case we know that flat $\Proj$-periodic
modules are trivial and that $\Inj$-periodic absolutely pure modules are also
trivial. But there are rings for which there exist non trivial
$\mathcal C$-periodic modules for the classes $\Proj$, $\Inj$ and
$\Flat$. These non trivial $\mathcal C$-periodic modules can be
constructed when $\mathcal C$ is a proper generating and cogenerating
class, see Corollary \ref{p:ConstructionPeriodic}.

However when considering the class $\Cot$ of cotorsion modules
(i.e. the right Ext-orthogonal with respect to the flat modules) one
confronts the major problem that this class is almost never generating
(except in the trivial case in which the ring is right perfect, when
all modules are cotorsion). This
is due to a result of Guil Asensio and Herzog \cite[Corollary 10]{GH}
for which we include here a short proof (Theorem
\ref{t:sigmacot}). This suggests that $\Cot$-periodic modules might be
trivial, and indeed one of the main applications of the second part of
the paper is to show that this is the case.

Hence we devote the second part of this paper to address, among
others, this question. We will work directly with hereditary
cotorsion pairs in the module category, rather than in the category of
complexes. So we get a slightly more direct proof of the dual of
Benson and Goodearl Theorem. Also our methods seem to be more far
reaching as they allow to prove, as we announced before, that each
$\Cot$-periodic module is trivial and also allow to get significant
consequences in Gorenstein homological algebra. We state below the
main applications based on the general theorem (Theorem
\ref{T:direct-limits}). The reader can find the proof of these
applications in Proposition \ref{P:Fp-inj} and Corollaries
\ref{c:GorInj} and \ref{c:pInjperiodic.trivial}.

\begin{thm}\label{Intr.Thm 2}
  The following hold true:
  \begin{enumerate}
  \item Every $\Inj$-periodic absolutely pure module is trivial.
  \item Let $(\A, \B)$ be a hereditary cotorsion pair in
    $\Modr R$. Assume that $\A$ is closed under pure epimorphic
    images. Then every $\B$-periodic module is cotorsion. In
    particular, every $\Cot$-periodic module is trivial (and so every
    $\Inj$-periodic module is cotorsion).
  \item Assume that each finitely generated right ideal has finite
    flat dimension. Then each Gorenstein injective module is
    injective. In particular if a ring has finite weak global
    dimension, every Gorenstein injective module is injective.
  \item Assume that each finitely generated right ideal has finite
    injective dimension. Then each Gorenstein injective module is
    injective.
  \item Every pure $\PInj$-periodic module is trivial.
  \end{enumerate}
\end{thm}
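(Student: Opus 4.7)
The proof of all five items is driven by Theorem \ref{T:direct-limits}, applied to various hereditary cotorsion pairs, together with standard facts about the flat-cotorsion pair $(\Flat,\Cot)$ (hereditary; $\Flat$ closed under pure epimorphic images; $\Inj\subseteq\Cot$) and Baer's criterion for injectivity.

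Part (2) is a direct specialisation of Theorem \ref{T:direct-limits}; the two ``in particular'' statements follow by taking $(\A,\B)=(\Flat,\Cot)$, whose left class is closed under pure quotients, and by observing $\Inj\subseteq\Cot$. Part (4) is the next easiest application: a Gorenstein injective module $M$ satisfies $\Ext^{i}(N,M)=0$ for every $N$ of finite injective dimension and every $i\ge 1$, which one proves by induction on the injective dimension of $N$ from the defining vanishing of $\Ext^{\ge 1}(J,M)$ for injective $J$. The hypothesis forces $R/J$ to have finite injective dimension for every finitely generated right ideal $J$, so $\Ext^{1}(R/J,M)=0$ and Baer's criterion yields $M$ injective.

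For part (3) the injective-dimension shortcut is unavailable, so the plan is first to deduce that $M$ is cotorsion by transferring the $\Cot$-property along the short exact sequences $0\to M\to I\to M'\to 0$ extracted from the bi-infinite Gorenstein injective complex through $M$ (using that $(\Flat,\Cot)$ is hereditary and $I$ is injective), and then to exploit the finite flat dimension of $R/J$. Concretely, a finite flat resolution $0\to F_{n}\to\cdots\to F_{0}\to R/J\to 0$ combined with the hereditary vanishing $\Ext^{\ge 1}(F,M)=0$ for $F$ flat and $M$ cotorsion gives $\Ext^{i}(R/J,M)=0$ for $i>n$ by dimension shifting; bridging down to $\Ext^{1}$ then uses the doubly infinite Gorenstein injective structure through $M$ to re-shuffle higher Ext back to $\Ext^{1}$, after which Baer's criterion finishes.

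For part (1), the sequence $0\to M\to I\to M\to 0$ is pure as soon as $M$ is absolutely pure, and part (2) yields $M$ cotorsion. The plan is to combine these two properties with the periodic structure (iterating the pure sequence to realise $M$ as a suitable limit of pure injectives) to force $M$ itself pure injective; the pure embedding $M\hookrightarrow I$ then splits, exhibiting $M$ as a direct summand of the injective $I$. Part (5) follows the same pattern as (2) but inside the pure exact category: apply the pure analogue of Theorem \ref{T:direct-limits} to the pure cotorsion pair whose right class is $\PInj$. The principal obstacles are in (3), where finite flat dimension alone yields only higher-Ext vanishing and must be combined with the bi-infinite Gorenstein injective structure to descend to $\Ext^{1}$, and in (1), where absolute purity together with cotorsion does not in general imply injectivity, so the pure periodic structure must be invoked essentially to upgrade $M$ to pure injective.
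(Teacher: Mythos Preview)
Your part~(2) matches the paper, but the other parts contain real gaps. In (3) and (4), Baer's criterion requires $\Ext^1_R(R/I,M)=0$ for \emph{every} right ideal $I$, yet you only treat finitely generated $J$; you never pass to arbitrary ideals. The paper writes $I=\bigcup_\gamma I_\gamma$ as a directed union of finitely generated ideals and uses Theorem~\ref{T:direct-limits} to obtain $I\in{}^\perp M$. That theorem, however, needs $M$ to be $\B$-periodic, and a Gorenstein injective module is not in general $\Inj$-periodic. The paper therefore first reduces (via Bennis--Mahdou: every Gorenstein injective module is a direct summand of a \emph{strongly} Gorenstein injective one) to the $\Inj$-periodic case; you omit this reduction entirely. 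In (4) your assertion that $R/J$ has finite injective dimension whenever $J$ does is unjustified---it would require control of the injective dimension of $R$. What is actually used is $J\in{}^\perp M$ (Gorenstein injectivity kills modules of finite injective dimension) together with Lemma~\ref{L:Ext-1}, which needs $\Inj$-periodicity, to identify $\Ext^1_R(R/I,M)$ with $\Ext^2_R(R/I,M)\cong\Ext^1_R(I,M)$. In (3) your plan to ``transfer cotorsion along $0\to M\to I\to M'\to 0$'' fails because cotorsion modules are not closed under subobjects; the paper instead uses part~(2) for the $\Inj$-periodic $M$ obtained after the Bennis--Mahdou reduction.

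For (1) the paper's proof is a one-liner you miss: $M$ absolutely pure means $\Ext^1_R(F,M)=0$ for every finitely presented $F$, and Theorem~\ref{T:direct-limits} (for the cotorsion pair $(\Modr R,\Inj)$) then yields $\Ext^1_R(X,M)=0$ for every $X$, so $M$ is injective. Your detour through pure injectivity is both unnecessary and incomplete---``realise $M$ as a suitable limit of pure injectives'' is not an argument. For (5) the paper does not build a ``pure analogue'' of Theorem~\ref{T:direct-limits}; it invokes the Representation Theorem to identify the pure-exact category with $\Flat(A)$ over the functor ring $A$, so that a pure $\PInj$-periodic object becomes a $\Cot$-periodic flat $A$-module, which is cotorsion by part~(2).
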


The third and fourth statements are applications of Theorem
\ref{T:direct-limits} to the realm of Gorenstein homological
algebra. The first statement shows the dual version of Benson and
Goodearl Theorem. The second statement has very interesting, and
perhaps surprising, consequences for chain complexes of modules as we
will indicate now. The last part of the paper (Section
\ref{S:acyclic}) is therefore devoted to infer these and other
applications of $\C$-periodic modules for chain complexes.

We have already mentioned the relationship observed by Holm and
Christensen between flat $\Proj$-periodic modules and acyclic chain
complexes of projective modules with flat cycles. In a recent paper
Estrada, Fu and Iacob \cite{EFI} show that Christensen's and Holm's
argument can be easily extended to provide a nice correspondence
between $\C$-periodic modules in a class $\D$ of modules, and acyclic
complexes of modules in $\C$ with cycles in $\D$ (see Proposition
\ref{P:cycles} for a precise formulation of the statement). This
bridge between periodic modules and acyclic complexes, is the key to
find applications of our results in the category $\Ch(R)$ of unbounded
chain complexes.  The first application is a consequence of Theorem
\ref{Intr.Thm 2}(2). We recall that a chain complex $C$ is called
\emph{dg-cotorsion} if each $C_n$ is a cotorsion module and each map
$f\mathcolon F\to C$ from an acyclic complex $F$ of flat modules with
flat cycles, is null-homotopic.
\begin{thm}
  Every acyclic complex of cotorsion modules has cotorsion cycles. As
  a consequence, every complex of cotorsion modules is $dg$-cotorsion.
\end{thm}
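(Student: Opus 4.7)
The plan is to treat the two claims separately: the first follows directly from Theorem~\ref{Intr.Thm 2}(2) applied to a $\Cot$-periodic module built out of the complex, and the second then follows by feeding the first into Gillespie's complete cotorsion pair $(\tilclass{\Flat},\dgclass{\Cot})$ on $\Ch(R)$.

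For the first claim, let $C$ be an acyclic complex of cotorsion modules with cycles $Z_n=\Z_n(C)$. Acyclicity yields short exact sequences $0\to Z_n\to C_n\to Z_{n-1}\to 0$, and taking products over $n\in\bbZ$ produces the short exact sequence
\[
0\to \prod_{n\in\bbZ} Z_n\to \prod_{n\in\bbZ} C_n\to \prod_{n\in\bbZ} Z_n\to 0.
\]
The class $\Cot$ is closed under products (since $\Ext^1$ commutes with products in the second variable), so $\prod_n C_n$ is cotorsion and $\prod_n Z_n$ is $\Cot$-periodic. Theorem~\ref{Intr.Thm 2}(2) then gives $\prod_n Z_n\in\Cot$, and since each $Z_k$ is a direct summand of this product and $\Cot$ is closed under direct summands, every $Z_k$ is cotorsion.

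For the consequence, let $X$ be any complex of cotorsion modules. Using completeness of the cotorsion pair $(\tilclass{\Flat},\dgclass{\Cot})$ in $\Ch(R)$, I choose a special $\dgclass{\Cot}$-preenvelope $0\to X\to Y\to A\to 0$ with $Y\in\dgclass{\Cot}$ and $A\in\tilclass{\Flat}$. My goal is to show that this sequence splits in $\Ch(R)$: then $X$ is a direct summand of $Y\in\dgclass{\Cot}$ and is itself dg-cotorsion, since $\dgclass{\Cot}$ is closed under direct summands (being defined by $\Ext^1$-orthogonality against $\tilclass{\Flat}$).

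To establish the splitting I use the first claim. Each $A_n$ is flat and each $X_n$ is cotorsion, so the sequence splits in every degree, and consequently each $A_n$ is a direct summand of the cotorsion module $Y_n$, hence itself flat and cotorsion. Thus $A$ is an acyclic complex of cotorsion modules, and the first claim produces cotorsion cycles $\Z_n(A)$ (which remain flat, as $A\in\tilclass{\Flat}$). The short exact sequences $0\to\Z_n(A)\to A_n\to\Z_{n-1}(A)\to 0$ then split, so $A$ decomposes as a direct sum of disk complexes $D^n(\Z_{n-1}(A))$; in particular $A$ is contractible. Using the adjunction $\Hom_{\Ch(R)}(D^n(M),-)\cong\Hom_R(M,(-)_n)$ and the resulting identification $\Ext^1_{\Ch(R)}(D^n(M),X)\cong\Ext^1_R(M,X_n)$, one obtains
\[
\Ext^1_{\Ch(R)}(A,X)\cong\prod_n \Ext^1_R(\Z_{n-1}(A),X_n)=0,
\]
so the chosen preenvelope sequence splits. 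The main obstacle is the passage from the acyclic case to arbitrary complexes of cotorsion modules; the key point is that the ``error term'' $A$ produced by the preenvelope is always acyclic, so the first claim applies to $A$ and ultimately forces it to be contractible.
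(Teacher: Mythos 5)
Your proof is correct and follows essentially the same route as the paper: the first claim is obtained by forming the $\Cot$-periodic module $\prod_n Z_n$ and applying the statement that $\Cot$-periodic modules are cotorsion (which is precisely what the paper packages via Propositions~\ref{P:cycles} and~\ref{P:Fp-inj}), and the second claim uses a special $\dg\Cot$-preenvelope from the complete cotorsion pair $(\tilclass{\Flat},\dg\Cot)$ and the first claim to show the cokernel $A\in\tilclass{\Flat}\cap\dw\Cot$ is contractible. The only cosmetic difference is the final step: the paper concludes that $X$ and $Y$ are homotopy equivalent and that $\dg\Cot$ is closed under homotopy equivalence, whereas you decompose $A$ into disks and compute $\Ext^1_{\Ch(R)}(A,X)=0$ to obtain an actual splitting of the preenvelope sequence; both are sound and lead to the same conclusion.
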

The proof of this Theorem can be found in theorems \ref{T:cot-Fp} and
\ref{T:dgcotdwcot}.  As a particular instance, we get that every
acyclic complex of injectives has cotorsion cycles. This was already
proved by {\v{S}}{\v{t}}ov{\'{\i}}{\v{c}}ek \cite[Corollary
5.9]{Stopurity}.

The second application (Corollary \ref{C:Stov.Cot.Pair}) also recovers
a result of {\v{S}}{\v{t}}ov{\'{\i}}{\v{c}}ek (\cite[Theorem
5.5]{Stopurity}):
\begin{cor}
  Let $\G$ be a finitely accessible additive category, and let
  $\widetilde{\mathrm{PurAc}}$ be the class of pure acyclic complexes
  in $\Ch(\G)$ (i.e. concatenation of pure short exact sequences in
  $\mathcal G$) and $dw\PInj$ be the class of chain complexes with
  pure injective components. The pair of classes
  $(\widetilde{\mathrm{PurAc}},dw\PInj)$ is a complete hereditary
  cotorsion pair in $\Ch(\G)_{dw\textrm{-}pur}$ (this is the category
  $\Ch(\G)$ of unbounded chain complexes with the degreewise pure
  exact structure induced from $\G$).
\end{cor}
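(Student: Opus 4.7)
The plan is to derive the corollary from the module-theoretic Theorem~\ref{Intr.Thm 2}(5) via the cycle-complex bridge of Proposition~\ref{P:cycles}. The key auxiliary input is that every pure acyclic complex $E$ of pure injective objects is contractible: each short exact sequence $0 \to Z_{n+1}(E) \to E_n \to Z_n(E) \to 0$ is pure by assumption, so Proposition~\ref{P:cycles} exhibits $Z_n(E)$ as a pure $\PInj$-periodic object, and Theorem~\ref{Intr.Thm 2}(5) then forces each cycle to be pure injective; the displayed pure sequences therefore split, and splicing the splittings yields a contracting homotopy.

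With this in hand I would establish the orthogonality $\widetilde{\mathrm{PurAc}}^{\perp_{dw\textrm{-}pur}} = dw\PInj$. Identifying $\Ext^1$-classes in the dw-pure exact structure with homotopy classes of chain maps, the inclusion $\supseteq$ reduces to proving that any chain map $f\colon F \to C$ from a pure acyclic $F$ to a degreewise pure injective $C$ is null-homotopic. The homotopy $h_n\colon F_n \to C_{n+1}$ is built inductively, the standard computation showing that at each step the obstruction is a map into $Z_n(C)$ whose class in $C_n / B_n(C)$ must be made to vanish; pure injectivity of the $C_n$ combined with the auxiliary contractibility above (applied to a pure acyclic complex of pure injective envelopes of the cycles of $F$, built to contain $F$ as a dw-pure subcomplex) is what allows one to kill these obstructions coherently. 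The reverse inclusion $\subseteq$ follows by testing $C$ against pure acyclic complexes built from iterated pure injective envelopes of any object in $\G$, which forces each $C_n$ to be pure injective; the dual test with disks $D^n(M)$ yields ${}^{\perp_{dw\textrm{-}pur}}(dw\PInj) = \widetilde{\mathrm{PurAc}}$.

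Heredity is then immediate: in any dw-pure short exact sequence $0 \to A \to B \to C \to 0$ with $A,B \in dw\PInj$, each degreewise row splits because $A_n$ is pure injective, exhibiting $C_n$ as a summand of a pure injective and hence pure injective. For completeness, the finite accessibility of $\G$ lets $\widetilde{\mathrm{PurAc}}$ be generated, via dw-pure extensions, transfinite compositions and summands, by a set of pure acyclic complexes of bounded cardinality assembled from finitely presented objects; the Eklof--Trlifaj theorem together with the small object argument then produces the required special $\widetilde{\mathrm{PurAc}}$-precovers and $dw\PInj$-preenvelopes. The main obstacle is the inductive null-homotopy construction in the middle paragraph: one must organise the pure injective envelopes of the cycles of $F$ into an auxiliary dw pure injective pure acyclic complex in such a way that the contractibility of the first paragraph can be channelled, one degree at a time, into a vanishing of the obstruction classes for $f$.
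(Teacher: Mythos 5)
Your proposal takes a genuinely different route from the paper, but it contains a real gap at the heart of the argument. The paper does not attempt to construct the cotorsion pair intrinsically in $\Ch(\G)_{dw\textrm{-}pur}$. Instead it transports the problem via the Representation Theorem to $\Ch(\Flat(A))$ over the functor ring $A$, imports the already-known complete hereditary cotorsion pair $(\tilclass\Flat, \dg\Cot)$ from $\Ch(A)$ (Gillespie's result \cite[Corollary 4.10]{G1}), restricts it to $\Ch(\Flat)$ using \cite[Corollary 7.5]{G7}, and then replaces $\dg\Cot$ by $\dw\Cot$ using Theorem~\ref{T:dgcotdwcot}; translating back gives the statement. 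Crucially, the completeness and the orthogonality are inherited, not re-derived.

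Your first paragraph is fine: Theorem~\ref{Intr.Thm 2}(5) together with the (pure) version of Proposition~\ref{P:cycles} does show that every pure acyclic complex with pure injective components splits into disks, hence is contractible. The gap is in the middle paragraph, and you already sense it. To conclude that any chain map $f\colon F \to C$ with $F$ pure acyclic and $C\in\dw\PInj$ is null-homotopic, you propose embedding $F$ as a dw-pure subcomplex of a pure acyclic, $\dw\PInj$, hence contractible complex $E$ and ``channeling'' the contraction. But a dw-pure monomorphism $\iota\colon F \hookrightarrow E$ with $C\in\dw\PInj$ only yields degreewise surjectivity of $\iota^*\colon\Hom(E_n,C_n)\to\Hom(F_n,C_n)$; it does \emph{not} let you extend $f$ to a \emph{chain} map $\tilde f\colon E\to C$, which is what you would need in order to transport the null-homotopy of $\tilde f$ (automatic by contractibility of $E$) back to one of $f$. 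Equivalently, the short exact sequence of Hom-complexes $0\to\HOM(E/F,C)\to\HOM(E,C)\to\HOM(F,C)\to 0$ is degreewise exact, but the vanishing of $H_0\HOM(F,C)$ is equivalent (via the connecting map, since $\HOM(E,C)$ is acyclic) to the vanishing of $H_{-1}\HOM(E/F,C)$, and nothing in the setup controls the quotient complex $E/F$. The inductive obstruction you describe — a class of $g_n = f_n - h_{n-1}d^F_n$ landing in $Z_n(C)$ that must be pushed into $B_n(C)$ and then lifted along $C_{n+1}\twoheadrightarrow B_n(C)$ — does not vanish for free when $C$ is not acyclic and $F$ is not degreewise pure-projective; overcoming it is exactly what requires the completeness of a cotorsion pair, so the argument is circular as stated. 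The reverse inclusion is also shakier than claimed: testing against disk complexes $D^n(E)$ with $E$ pure injective gives no information, since $\Ext^1_{\Ch(\G)_{dw\textrm{-}pur}}(Y, D^n(E))\cong\Ext^1_{(\G,\E)}(Y_{n-1},E)$ vanishes automatically; one has to test against genuinely unbounded complexes of pure injectives. Finally, the completeness paragraph (Eklof--Trlifaj plus small object argument in an efficient exact category) is plausible but would require a careful filtration/cardinality argument in $\Ch(\G)_{dw\textrm{-}pur}$ that the proposal does not supply. In short, the paper sidesteps every one of these difficulties by transporting a pre-existing complete cotorsion pair through the functor ring and then improving $\dg\Cot$ to $\dw\Cot$; your plan attempts to rebuild that completeness from scratch and stalls precisely at the step where the existing machinery is actually used.
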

% In \cite[Theorem 8.6]{Nee08} Neeman proves that the pair
% $(dw\Proj,\tilclass{\Flat})$ is a complete cotorsion pair in
% $\Ch(\Flat)$. Our last application gives the
% dual %version of this theorem for coherent rings:
% \begin{cor}
%   Let $R$ be a right coherent ring. The pair of classes
%   $(\tilclass{\FpInj},dw\Inj)$ is a complete cotorsion pair in
%   $\Ch(\FpInj)$.
% \end{cor}

\section{Preliminaries}

Given an ordinal $\kappa$, a family of sets
$\{A_\alpha\mid \alpha < \kappa\}$ is called \emph{continuous} if
$\bigcup_{\alpha < \lambda}A_\alpha = A_\lambda$ for each limit
ordinal $\lambda$ smaller than $\kappa$. Given a map
$f\mathcolon A \rightarrow B$ and $C$ a subset of $A$, we shall denote
by $f \rest C$ the restriction of $f$ to $C$. The cardinality of a set
$A$ will be denoted by $|A|$; $\omega$ will be the first infinite
ordinal number.

Let $\mathcal X$ be a class of modules containing all projective
modules and $n$ a natural number. We shall denote by $\mathcal X_n$ the class of all modules
with $\mathcal X$-projective dimension less than or equal to $n$ (a
module $M$ is said to have $\mathcal X$-projective dimension less than
or equal to $n$ if there exists a projective resolution of $M$ such
that its $(n-1)st$ syzygy belongs to $\mathcal X$). The right
finitistic $\mathcal X$-projective dimension of $R$ is
\begin{displaymath}
  \FPD_\mathcal{X}(R)= \Min\{n < \omega: \mathcal X_n = \mathcal X_{n+1}\}
\end{displaymath}
in case the set in the right is not empty, or $\infty$
otherwise. The right finitistic
$\mathcal Y$-injective dimension, $\FID_{\mathcal Y}(R)$, for a class
$\mathcal Y$ containing all injective modules is defined analogously. We will denote by
$\Proj$, $\Inj$, $\Flat$, $\Abs$, $\Cot$, $\PProj$ and $\PInj$ the
classes $\Modr{R}$ consisting of all projective, injective, flat,
absolutely pure, cotorsion, pure projective and pure injective modules
respetively.

\subsection{Locally split short exact sequences}

An epimorphism (resp. monomorphism) $f:M \rightarrow N$ in $\ModR$ is  \emph{locally split} if for each $x \in N$ (resp. $x \in M$)
there exists $g \colon N\rightarrow M$ such that $fg(x) = x$
(resp. $gf(x)=x$).

By \cite[Corollary 2]{Azumaya}, the morphism $f$ is a locally split
epimorphism if
and only if for each finite subset $F$ of $N$, there
exists $g\colon N \rightarrow M$ such that $fg(x)=x$ for each $x \in
F$. Moreover, by \cite[Proposition 11]{ZH}, if $f$ is a locally
split epimorphism and $N$ is countably generated, then $f$ is actually
split. We shall use the following property of
locally split epimorphisms:

\begin{lem}\label{l:LocallySplitEpi}
  Consider the following commutative diagram in $\ModR$
  \begin{displaymath}
    \begin{tikzcd}
      M \arrow{r}{f} \arrow{d}{h} & N \arrow{d}{g}\\
      K \arrow{r}{i} & L
    \end{tikzcd}
  \end{displaymath}
  in which $f$ and $g$ are locally split epimorphisms. Then so is $i$.
\end{lem}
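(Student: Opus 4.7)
The plan is a straightforward diagram chase. First I would note that locally split epimorphisms are in particular surjective, so $f$ and $g$ are both surjective, whence $gf$ is surjective; by commutativity $ih = gf$, so $i$ is surjective as well. This reduces matters to verifying the local splitting property of $i$ elementwise.

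For a fixed $y \in L$, the idea is to produce a section of $i$ at $y$ by pulling $y$ back through $g$, then through $f$, and finally pushing down via $h$. Concretely, I would first apply the local splitting of $g$ to obtain $\sigma \colon L \to N$ with $g\sigma(y) = y$, and set $n = \sigma(y) \in N$. Then I would apply the local splitting of $f$ at $n$ to obtain $\tau \colon N \to M$ with $f\tau(n) = n$, and set $m = \tau(n) \in M$. By construction $gf(m) = g(n) = y$, and the commutativity of the square gives $ih(m) = y$ as well.

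Finally, I would define the candidate section by $t = h \circ \tau \circ \sigma \colon L \to K$. Then $t(y) = h(\tau(\sigma(y))) = h(\tau(n)) = h(m)$, hence $i t(y) = ih(m) = y$. Since $y \in L$ was arbitrary, $i$ admits a local splitting at every element, which is exactly the definition (and, via the \cite{Azumaya} characterization recalled in the paper, immediately upgrades to the finite-subset version if needed). I do not expect any real obstacle here: the nontrivial ingredient is simply recognizing that $h \circ \tau \circ \sigma$ is forced by the shape of the diagram.
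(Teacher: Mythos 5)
Your construction $t = h\circ\tau\circ\sigma$ is precisely the morphism $\overline{i} = h\,\overline{f}\,\overline{g}$ used in the paper's proof, and the verification via $ih = gf$ is the same diagram chase. The proposal is correct and takes essentially the same approach as the paper.
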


\begin{proof}
  Let $x \in L$ and $\overline g\mathcolon L \rightarrow N$ be such
  that $g\overline g(x) = x$. Moreover, let
  $\overline f\mathcolon N \rightarrow M$ be such that
  $f\overline f(\overline g(x)) = \overline g(x)$. Then the morphism
  $\overline i:=h \overline f \overline g$ from $L$ to $K$ satisfy
  $i \overline i(x) = x$. Since $x$ is arbitrary, this means that $i$
  is locally split.
\end{proof}

Given a submodule $K$ of a
module $M$, if the inclusion $K \rightarrow M$ (resp. the projection
$M \rightarrow \frac{M}{K}$) is locally split, we shall say that $K$
(resp. $\frac{M}{K}$) is a \emph{locally split submodule}
(resp. \emph{quotient}) of $M$.  Given $\mathcal C$ a class of modules we shall denote by:
\begin{itemize}
\item $\Add(\mathcal C)$ the class of all modules that are isomorphic
  to a direct summand of a direct sum of modules in $\mathcal C$;

\item $\add(\mathcal C)$ the class of all modules that are isomorphic
  to a direct summand of a finite direct sum of modules in
  $\mathcal C$;

\item $\lse(\mathcal C)$ the class of all modules $M$ for which there
  exists a locally split epimorphism $A \rightarrow M$ with
  $A \in \Add(\mathcal C)$;

\item and $\lsm(\mathcal C)$ the class of all modules $M$ for which
  there exists a locally split monomorphism $M \rightarrow A$ with
  $A \in \Add(\mathcal C)$.
\end{itemize}

The following lemma recalls properties of the classes
$\lse(\mathcal C)$ and $\lsm(\mathcal C)$. The properties (1), (2) and
(4) are well known. The property (3) is an extension of a classical result for free
modules due to Villamayor (see \cite[Proposition 2.2]{Chase}):

\begin{lem}\label{l:PropertiesOfG}
  Let $\mathcal C$ be a class of modules.
  \begin{enumerate}
  \item The class $\lse(\mathcal C)$ is closed under locally split
    quotients and locally split submodules. Consequently,
    $\lsm(\mathcal C) \subseteq \lse(\mathcal C)$.

  \item Countably generated modules belonging to $\lse(\mathcal C)$ or
    $\lsm(\mathcal C)$ actually belong to $\Add(\mathcal C)$.

  \item If $\mathcal C$ consists of finitely generated modules, then
    $\lsm(\mathcal C)$ is closed under pure submodules.

  \item If $\mathcal C$ consists of finitely presented modules, then
    $\lse(\mathcal C)$ is closed under pure submodules.
  \end{enumerate}
\end{lem}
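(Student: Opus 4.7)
The plan is to handle each of the four parts by invoking Azumaya's criterion that local splitness is equivalent to the existence of a local section (resp.\ retraction) on every finite subset of the target.

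For (1), I first verify that the composition of two locally split epimorphisms is locally split: given $f\mathcolon A\to M$ and $g\mathcolon M\to N$ locally split, for $x\in N$ successively choose $u\mathcolon N\to M$ with $gu(x)=x$ and then $v\mathcolon M\to A$ with $fv(u(x))=u(x)$, so that $gf\circ(vu)(x)=x$. Closure of $\lse(\mathcal C)$ under locally split quotients follows at once. For closure under locally split submodules, suppose $N\hookrightarrow M$ is locally split and $f\mathcolon A\to M$ witnesses $M\in\lse(\mathcal C)$. For each finite $F\subseteq N$, combine a local retraction $\rho_F\mathcolon M\to N$ at $F$ with a local section $\sigma_F\mathcolon M\to A$ of $f$ at $F$ to produce $h_F:=\rho_F f\mathcolon A\to N$, which admits $\sigma_F|_N$ as a section at $F$. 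Assembling these into a single map $A^{(J)}\to N$, where $J$ is the set of finite subsets of $N$ and the $F$-coordinate is $h_F$, yields the desired locally split epimorphism from an object of $\Add(\mathcal C)$. The inclusion $\lsm(\mathcal C)\subseteq\lse(\mathcal C)$ is then obtained by viewing $M\in\lsm(\mathcal C)$ as a locally split submodule of its ambient $A\in\Add(\mathcal C)\subseteq\lse(\mathcal C)$.

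Part (2) is a direct consequence of the Zimmermann--Huisgen result quoted immediately before the lemma: a locally split epimorphism onto a countably generated module is split, so any countably generated $M\in\lse(\mathcal C)$ is a direct summand of some $A\in\Add(\mathcal C)$; the $\lsm$ case reduces to this via (1). Part (4) is proved by a purity-lifting argument that hinges on the finite presentability of $\mathcal C$. Given $K\subseteq M$ pure and $f\mathcolon A=\bigoplus_i C_i\to M$ with each $C_i$ finitely presented, for each finite $F\subseteq K$ pick a local section $g$ of $f$ at $F$ and a finite sub-sum $A_0$ of $A$ containing $g(F)$; then $A_0$ is finitely presented, and with $L:=\langle g(F)\rangle\subseteq A_0$ the quotient $A_0/L$ remains finitely presented. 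The composition $\pi f|_{A_0}\mathcolon A_0\to M/K$ vanishes on $L$ and hence factors through $A_0/L$; by purity of $K$ in $M$ this factorization lifts to a map $A_0/L\to M$, whose pullback $\tilde h\mathcolon A_0\to M$ satisfies $\tilde h|_L=0$ and $\pi\tilde h=\pi f|_{A_0}$. Then $\phi_F:=f|_{A_0}-\tilde h\mathcolon A_0\to K$ admits $g|_K$ as a section at $F$; extending $\phi_F$ by zero on the complementary summand of $A_0$ in $A$ and aggregating the $\phi_F$'s as in (1) produces the desired locally split epimorphism onto $K$.

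Part (3) is the main obstacle. Since every locally split monomorphism is pure, the ambient $A=\bigoplus_i C_i\in\Add(\mathcal C)$ contains $M$ purely, and $K$ is then pure in $A$ by transitivity of purity, so the task reduces to showing that every pure submodule $K$ of $\bigoplus_i C_i$ with $C_i$ finitely generated lies in $\lsm(\mathcal C)$. This extends Villamayor's classical argument, which in the case $C_i=R$ works by writing $x\in K$ as $\sum t_j e_j$ in a basis, applying purity to obtain $y_j\in K$ with $\sum t_j y_j=x$, and defining the retraction by $e_j\mapsto y_j$ on the support of $x$ and to zero elsewhere; freeness of $A$ ensures the retraction is well defined. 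The extension to $C_i$ merely finitely generated is delicate because the generators of each $C_i$ now satisfy relations from a potentially non-finitely-generated relation module, while purity provides solutions only to finite systems. The technical heart of the proof is a careful argument that handles these relations; the most natural approach is to pass to a free cover $P\twoheadrightarrow A$, apply Villamayor to the pure submodule $\tilde K\subseteq P$ obtained by pullback, and descend to $A$ by choosing lifts of a single element $x$ compatibly with a finite set of controlling relations, exploiting the purity of $K$ in $A$ to solve the resulting finite system. The hard part is precisely verifying that this finite control suffices in spite of the infinite relation module.
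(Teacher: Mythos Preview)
Your arguments for (1), (2), and (4) are correct and more explicit than the paper's, which simply cites \cite[Lemma~2.1]{Angeleri} for closure under locally split submodules, the Zimmermann--Huisgen splitting result for (2), and \cite[Proposition~2.3]{Angeleri} for (4).

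The real gap is in (3). Your reduction---show that every pure submodule $K$ of $A=\bigoplus_i C_i$ with each $C_i$ finitely generated lies in $\lsm(\mathcal C)$---is legitimate, but the free-cover strategy breaks at the first step. If $\pi\mathcolon P\twoheadrightarrow A$ is a free presentation and $\tilde K=\pi^{-1}(K)$, there is no reason for $\tilde K$ to be pure in $P$: a system $\bar p\,\Lambda=\bar k$ with $\bar k\in\tilde K^{\,n}$ and $\bar p\in P^{\,m}$ pushes down to one solvable in $K$ by purity of $K$ in $A$, but lifting that $K$-solution back to $\tilde K$ leaves a discrepancy in $(\ker\pi)^n$, and solving for it inside $\ker\pi$ would require $\ker\pi$ to be pure in $P$---equivalently $A$ to be flat---which is not assumed. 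So Villamayor's theorem is unavailable for $\tilde K\subseteq P$ and the descent never starts; the ``finite control of relations'' you flag is not the obstruction---the purity of $\tilde K$ itself is.

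The paper avoids free covers entirely. For a finitely generated $U\leq K$ it chooses a finite $J$ with $U\leq\bigoplus_{j\in J}N_j$ and sets up the two-row diagram with rows $0\to U\to\bigoplus_{j\in J} N_j\to C'\to 0$ and $0\to K\to\bigoplus_{i\in I} N_i\to C\to 0$. Purity of $K$ in $\bigoplus_{i\in I} N_i$ together with \cite[7.16]{Wis} then yields a map $\overline g\mathcolon\bigoplus_{j\in J} N_j\to K$ fixing $U$ pointwise; extending by zero to $\bigoplus_{i\in I} N_i$ and restricting to $N$ gives the desired local retraction $N\to K$. Hence $K$ is locally split in $N$, and composition of locally split monomorphisms (\cite[Proposition~1.3]{Zimm}) gives $K\in\lsm(\mathcal C)$. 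The point is to stay inside the given direct sum and exploit its finite sub-sums directly, rather than pulling back to a free module whose relation module you cannot control.
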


\begin{proof}
  (1) The class $\lse(\mathcal C)$ is clearly closed under locally
  split quotients. It is closed under locally split submodules as a
  consequence of \cite[Lemma 2.1]{Angeleri}.
 
  \noindent
  (2) Since a locally split epimorphism onto a countably generated
  module is split, a countably generated module belonging to
  $\lse(\mathcal C)$ actually belongs to $\Add(\mathcal C)$. As
  $\lsm(\mathcal C) \subseteq \lse(\mathcal C)$, countably generated
  modules in $\lsm(\mathcal C)$ belong to $\Add (\mathcal C)$ too.

\noindent
(3) Let $N \in \lsm(\mathcal C)$ and $K$ a pure submodule of
$N$. Since $N$ is isomorphic to a locally split submodule of a module
$A$ which is a direct summand of a direct sum of finitely generated
modules, we may assume that there exists a family
$\{N_i\mid i \in I\}$ of finitely generated modules such that $N$ is a
locally split submodule of $\bigoplus_{i \in I}N_i$.

\noindent  
We prove that $K$ is a locally split submodule of $N$ which implies,
by \cite[Proposition 1.3]{Zimm}, that $K \in \lsm(\mathcal C)$. Let
$U$ be a finitely generated submodule of $K$. There exists a finite
set $J \subseteq I$ and a commutative diagram
\[\begin{tikzcd}
  0 \arrow{r} & U \arrow[hook]{r} \arrow[hook]{d} & \bigoplus_{j \in
    J}N_j \arrow[hook]{d}
  \arrow{r}{p'} & C' \arrow{r} \arrow{d}{q} & 0\\
  0 \arrow{r} & K \arrow[hook]{r} & \bigoplus_{i \in I}N_i
  \arrow{r}{p} & C \arrow{r} & 0.
\end{tikzcd}\]
Since $K$ is pure in $\bigoplus_{i \in I}N_i$ and $C'$ is finitely
presented, there exists a morphism
$r\colon C' \rightarrow \bigoplus_{i \in I}N_i$ such that $pr=q$. By
\cite[7.16]{Wis} there exists
$\overline g\mathcolon \bigoplus_{j \in J}N_j \rightarrow K$ such that
$\overline g(u)=u$ for each $u \in U$. This $\overline g$ extends to a
morphism $g'\colon \bigoplus_{i \in I}N_i \rightarrow K$ whose
restriction to $N$, $g\colon N \rightarrow K$, trivially satisfies
$g(u)=u$ for each $u \in U$. Then $K$ is a locally split submodule of
$N$.

\noindent
(4). This is \cite[Proposition 2.3]{Angeleri}.
\end{proof}

A short exact sequence
$$0\to K\stackrel{f}{\to} M\stackrel{g}{\to}N\to 0$$ is said to be
locally split if $f$ is a locally split monomorphism and $g$ is a
locally split epimorphism. Contrary to the case of pure exact
sequences, there exist locally split epimorphisms and locally split
monomorphisms such that the corresponding short exact sequence is not
locally split (see \cite[Example 13]{ZH}). However, if in a short exact sequence $$0\to
K\stackrel{f}{\to} M\stackrel{g}{\to}N\to 0,$$ $g$ is locally split
and $M$ is a direct sum of countably generated modules, then the
sequence is locally split by \cite[Proposition 12]{ZH}. This is the
situation of our Theorem \ref{Intr.Thm 1}.

\subsection{Complete cotorsion pairs}

A pair of classes $(\A,\B)$ in an abelian category $\G$ is called a
\emph{cotorsion pair} if $\A^\perp=\B$ and $^{\perp} \B=\A$, where,
for a given class of objects $\C$, the right orthogonal class
$\C^\perp$ is defined as the class of objects $M$ such that
$\Ext^1_{\G}(C,M)=0$ for each object $C\in \C$. Similarly, the left
orthogonal class $^\perp \C$ is defined. The cotorsion pair is called
\emph{hereditary} if $\Ext^i_{\G}(A,B)=0$ for all $A\in \A$ and
$B\in \B$, and $i\geq 1$. Finally we say that the cotorsion pair is
\emph{complete} when it has \emph{enough injectives} and \emph{enough
  projectives}. This means that for each $M\in \G$ there exist exact
sequences $0\to M\to A\to B\to 0$ (enough injectives) and
$0\to B'\to A'\to M\to 0$ (enough projectives), where $A,A'\in \A$ and
$B,B'\in \B$.

\subsection{Chain complexes of modules}
Let $\G$ be an abelian category, we denote by $\Ch(\G)$ the category
of unbounded chain complexes of objects in $\G$, i.e. complexes $G$ of
the form
$$\cdots\to G_{n+1}\xrightarrow{d^G_{n+1}}G_n\xrightarrow{d^G_{n}}
G_{n-1}\to\cdots.$$
We will denote by $Z_n G$ the \emph{$n$ cycle of} $G$,
i.e. $Z_nG=\Ker(d^G_n)$. Given a chain complex $G$ the
\emph{$n^{th}$-suspension of $G$}, $\Sigma^n G$, is the complex
defined as $(\Sigma^n G)_k=G_{k-n}$ and
$d^{\Sigma^n G}_k=(-1)^n d_{k-n}^G$. And for a given object $A\in \G$,
the \emph{$n$-disk} complex is the complex with the object $A$ in the
components $n$ and $n-1$, $d_n$ as the identity map, and 0
elsewhere. Given a covariant functor $F\colon\G \rightarrow H$ between
abelian categories, we shall denote by $F(G)$ the complex
\begin{displaymath}
  \cdots\to
  F(G_{n+1})\xrightarrow{F(d_{n+1}^G)}F(G_n)\xrightarrow{F(d_{n}^G)} F(G_{n-1})\to\cdots
\end{displaymath}
If $F$ is contravariant, we define $F(G)$ in a similar way.

In case $\G=\Modr R$, we will denote $\Ch(\G)$ simply by $\Ch(R)$. An
acyclic complex $G$ in $\Ch(R)$ consisting of projective modules
(resp. injective modules) is said to be \emph{totally acyclic} if
$\Hom_R(G,P)$ (resp. $\Hom_R(I,G)$) is acyclic for each projective
module $P$ (resp. injective module $I$).  For every class $\C$ of
modules, we shall consider the following classes of complexes of
modules:

\begin{itemize}
\item $\dw\C$ is the class of all complexes $X\in\Ch(R)$ such that
  $X_n\in\C$ for all $n\in\bbZ$.  $\Ch(\C)$ will denote the full
  subcategory of $\Ch(R)$ with objects in $\dw\C$.
\item $\ex\C$ is the class of all acyclic complexes in $\dw\C$.
\item $\widetilde{\C}$ is the class class of all complexes
  $X\in \ex\C$ with the cycles $Z_nX$ in $\C$ for all $n\in \bbZ$.
  % $\Ch_{ac}(\C)$ will denote the full subcategory of $\Ch(R)$ with
  % objects in $\tilde\C$.
  % \item $\Cot$ denotes the class of cotorsion modules and $\FpInj$
  %   the class of absolutely pure modules.
  % \item $\Proj$ denotes the class of projective modules and $\Flat$
  %   the class of flat modules.

\item If $(\A, \B)$ is a cotorsion pair in $\Modr{R}$, then $\dg\A$ is
  the class of all complexes $X\in \dw\A$ such that any morphism
  $f\mathcolon X\to Y$ with $Y\in\tilclass \B$ is null
  homotopic. Since $\Ext^1_{R}(A_n, B_n)=0$ for every $n\in \bbZ$, a
  well known formula shows that $\dg\A={}^\perp{} \tilclass
  \B$.
  Similarly, $\dg\B$ is the class of all complexes $Y \in \dw\B$ such
  that any morphism $f\mathcolon X\to Y$ with $X\in\tilde \A$ is null
  homotopic. Hence $\dg\B=\tilde\A{}^\perp{}$.
\end{itemize}
If $\mathcal C$ is a class of complexes of modules, we shall denote by
$\textrm{Z}(\mathcal C)$ the class of all modules isomorphic to a
cycle of some complex belonging to $\mathcal C$.

\subsection{$\C$-periodic modules.}
We are interested in periodic modules with respect to a class of modules.

\begin{defn}
  Let $\mathcal C$ be a class of modules and $M$ a module. We say that
  $M$ is $\mathcal C$-periodic (resp. pure $\mathcal C$-periodic) if
  there exists an exact sequence (resp. a pure exact sequence)
$$0\to M\to C\to M\to 0$$
with $ C \in \mathcal C$.
\end{defn}

These modules are related with Gorenstein modules. Recall that a
module $M$ is \emph{Gorenstein projective} (resp. \emph{Gorenstein
  injective}) if there exists a totally acyclic complex of projective
modules (resp. injective modules) such that $M \cong Z_0G$. And $M$ is
said to be \emph{strongly Gorenstein projective} (resp. \emph{strongly
  Gorenstein injective}) if it is Gorenstein projective and
$\Proj$-periodic (resp. Gorenstein injective and $\Inj$-periodic), see
\cite[Proposition 2.9]{BennisMahdou}. By \cite[Theorem
2.7]{BennisMahdou}, each Gorenstein projective (resp. Gorenstein
injective) module is a direct summand of a strongly Gorenstein
projective (resp. strongly Gorenstein injective) module.

Analogously, a module $M$ is said to be Gorenstein flat if there
exists an acyclic complex $G$ consisting of flat modules such that
$G \otimes_R I$ is exact for each injective left $R$-module $I$ and
$M \cong Z_0G$. And $M$ is strongly Gorenstein flat if it is
$\Flat$-periodic and Gorenstein flat. By \cite[Theorem
3.5]{BennisMahdou}, each Gorenstein flat module is a direct summand of
a strongly Gorenstein flat module.

The relationship between Gorenstein modules and strongly Gorenstein
modules  observed by Bennis and Mahdou can be extended to cycles of certain chain complexes and
periodic modules, as we show in the following proposition. This approach was
used by Christensen and Holm \cite{CH} and by Fu and Herzog
\cite{FH16} in the special case of flat $\Proj$-periodic modules. We present now the general formulation that appears in \cite{EFI}. 

\begin{prop}\label{P:cycles}(\cite[Proposition 1 and Proposition
  2]{EFI}) Let $\C$ be a class of modules closed under direct sums or under direct
  products and let $\D$ be a class closed under direct summands. The
  following are equivalent:
  \begin{enumerate}
  \item Every cycle of an acyclic complex with components in $\C$
    belongs to $\D$, that is $\Z(\ex(\mathcal C)) \subseteq \D$.
  \item Every $\C$-periodic module belongs to $\D$.
  \end{enumerate}
\end{prop}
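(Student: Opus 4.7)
The direction (2) $\Rightarrow$ (1) is the substantive one; the converse is almost immediate. For (1) $\Rightarrow$ (2), given a $\C$-periodic module $M$ with short exact sequence $0\to M\to C\to M\to 0$ and $C\in\C$, I would splice countably many copies of this sequence on both sides to produce an acyclic complex with every component equal to $C\in \C$ and every cycle equal to $M$; then (1) gives $M\in\D$.

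For (2) $\Rightarrow$ (1), the plan is to convert an arbitrary acyclic complex $X\in \ex(\C)$ into a single $\C$-periodic module that has the prescribed cycle $Z_0X$ as a direct summand. The trick is to form the ``total shift'' complex
\[
D \;=\; \bigoplus_{n\in\bbZ}\Sigma^n X
\quad\text{or}\quad
D \;=\; \prod_{n\in\bbZ}\Sigma^n X,
\]
using the direct sum when $\C$ is closed under direct sums and the direct product when $\C$ is closed under direct products. In either case $D$ is acyclic and lies in $\dw\C$, because each degree of $D$ is a (co)product of terms $X_{k}$ which all belong to $\C$. The essential observation is that $D$ is invariant under suspension: reindexing gives $\Sigma D\cong D$, and consequently $Z_mD\cong Z_{m-1}D$ for every $m\in\bbZ$.

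Applied to $m=0$, this isomorphism combined with the acyclicity short exact sequence
\[
0\longrightarrow Z_0D\longrightarrow D_0\longrightarrow Z_{-1}D\longrightarrow 0
\]
yields a short exact sequence $0\to Z_0D\to D_0\to Z_0D\to 0$ with $D_0\in\C$. Thus $Z_0D$ is a $\C$-periodic module, so by (2) we have $Z_0D\in\D$. On the other hand, since $Z_0D$ is either $\bigoplus_{n}Z_{-n}X$ or $\prod_{n}Z_{-n}X$, the cycle $Z_0X$ appears as a direct summand of $Z_0D$ (as one factor of the coproduct, or via the projection/inclusion of one factor in the product). Because $\D$ is closed under direct summands, $Z_0X\in\D$. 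Shifting if necessary, the same argument shows $Z_mX\in\D$ for every $m$.

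The only step that requires any care is verifying that $\Sigma D\cong D$ really does imply $Z_0D\cong Z_{-1}D$ in a way compatible with the boundary map of $D$, so that the induced sequence $0\to Z_0D\to D_0\to Z_0D\to 0$ is genuinely of the $\C$-periodic form; this, however, is a formal reindexing. The dichotomy between the sum and product construction is exactly what forces the hypothesis ``$\C$ closed under direct sums \emph{or} direct products'', and beyond this point the argument is purely formal.
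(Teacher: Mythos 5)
Your proof is correct and uses the same construction the paper relies on (cf.\ the proof of Corollary~\ref{p:ConstructionPeriodic}, and the cited \cite{EFI}): from an acyclic $X\in\ex(\C)$ one passes to the $\C$-periodic module $\bigoplus_{n}Z_nX$ (resp.\ $\prod_n Z_nX$), which is precisely your $Z_0D$ for $D=\bigoplus_n\Sigma^nX$ (resp.\ $\prod_n\Sigma^nX$), and then uses closure of $\D$ under direct summands. The reverse direction by splicing copies of $0\to M\to C\to M\to 0$ is likewise the standard one.
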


Let $\mathcal C$ be a class of modules. One of our main concerns is
when $\mathcal C$-periodic modules are trivial, in the sense that they
belong to the initial class $\mathcal C$. The preceeding result gives
us a procedure to construct non trivial $\mathcal C$-periodic modules:
we only have to find a complex in $\ex \mathcal C$ whose cycles do not
lie in $\mathcal C$. This complex exists if $\mathcal C$ is a
generating and cogenerating class in $\rmod{R}$.

\begin{cor}\label{p:ConstructionPeriodic}
  Let $\mathcal C$ be a generating and cogenerating class of modules
  closed under direct sums or direct products. Let $\mathcal D$ be a
  class of modules closed under direct summands which is not equal to
  the whole category $\Modr R$. Then there exist periodic modules not
  belonging to $\mathcal D$.
\end{cor}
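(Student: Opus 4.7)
The strategy is to invoke Proposition \ref{P:cycles} in contrapositive form. That proposition, whose hypotheses on $\mathcal C$ and $\mathcal D$ are exactly those assumed here, says that every $\mathcal C$-periodic module belongs to $\mathcal D$ if and only if $\Z(\ex\mathcal C)\subseteq\mathcal D$. Therefore it is enough to exhibit a single acyclic complex with components in $\mathcal C$ that has some cycle lying outside $\mathcal D$.

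Since $\mathcal D\neq\Modr R$, I would fix any module $M\notin\mathcal D$ and flank it with two one-sided resolutions by objects of $\mathcal C$. Using that $\mathcal C$ is generating, I inductively build epimorphisms $C_{0}\twoheadrightarrow M$ and $C_{n+1}\twoheadrightarrow\Ker(C_{n}\to C_{n-1})$ with $C_n\in\mathcal C$, yielding an exact sequence $\cdots\to C_{1}\to C_{0}\to M\to 0$. Dually, since $\mathcal C$ is cogenerating, I build monomorphisms $M\hookrightarrow C^{0}$ and $\Coker\hookrightarrow C^{n+1}$ with $C^{n}\in\mathcal C$, producing $0\to M\to C^{0}\to C^{1}\to\cdots$. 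Splicing these along the composite $C_{0}\twoheadrightarrow M\hookrightarrow C^{0}$ gives an unbounded complex
\[
 X\mathcolon\quad \cdots\to C_{1}\to C_{0}\xrightarrow{\;\iota\pi\;} C^{0}\to C^{1}\to\cdots
\]
that lies in $\ex\mathcal C$, and in which $M=\Ker(C^{0}\to C^{1})$ appears as a cycle, so $M\in\Z(\ex\mathcal C)$.

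Since $M\in\Z(\ex\mathcal C)$ while $M\notin\mathcal D$, condition (1) of Proposition \ref{P:cycles} fails, hence condition (2) fails as well, producing a $\mathcal C$-periodic module outside $\mathcal D$. The only step that warrants mild care is the two-sided splicing construction of $X$, but this is entirely routine; no serious obstacle arises, because the generating and cogenerating hypotheses deliver exactly the (co)resolutions required, and the closure of $\mathcal C$ under direct sums or direct products is needed only to license the application of Proposition \ref{P:cycles}.
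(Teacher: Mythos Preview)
Your proof is correct and follows essentially the same approach as the paper: take $M\notin\D$, use that $\C$ is generating and cogenerating to build an acyclic complex $X\in\ex\C$ with $M$ as a cycle, and conclude via Proposition~\ref{P:cycles}. The only cosmetic difference is that the paper carries out the (1)$\Rightarrow$(2) direction of Proposition~\ref{P:cycles} explicitly, naming the periodic module as $\bigoplus_{n\in\mathbb Z}Z_nX$ (or $\prod_{n\in\mathbb Z}Z_nX$) and noting that $M$ is a direct summand of it, whereas you invoke the proposition as a black box.
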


\begin{proof}
  Let $M$ be a module not belonging to $\D$. Since $\mathcal C$ is
  generating and cogenerating, there exists a complex
  $C \in \ex \mathcal C$ such that $M \cong Z_0 C$. Then, if
  $\mathcal C$ is closed under direct sums (resp. direct products),
  $\bigoplus_{n \in \mathbb Z}Z_nC$ (resp.
  $\prod_{n \in \mathbb Z}Z_nC$) is a $\mathcal C$-periodic module
  not belonging to $\mathcal D$.
\end{proof}

\section{Locally split monomorphisms and generalized periodic modules}

% \begin{lemma}
%   Let $G$ be a module which is a direct sum of countably generated
%   modules and $M$ a locally split submodule of $G$. Then, for each
%   cardinal $\lambda$ and each $\lambda$-generated submodule $K$ of
%   $M$, there exists a locally split submodule $L$ of
%   $\bigoplus_{i \in I}G_i$ containing $K$ that is
%   $\Max\{\aleph_0,\lambda\}$-generated.
% \end{lemma}

% \begin{proof}
%   This is essentially proved in \cite[Lemma 1.2]{Simson}; we include
%   the proof for completness. Let $\{x_\alpha:\alpha < \lambda\}$ be
%   a generating system of $K$. For any $\alpha < \lambda$, there
%   exists $f_\alpha\colon G \rightarrow M$ such that
%   $f_\alpha(x_\alpha) = x_\alpha$. Since $x_\alpha$ is contained in
%   a countably generated direct summand of $G$, we may assume that
%   $\Img f_\alpha$ is countably generated. Now let
%   $K_1 = \sum_{\alpha < \lambda}\Img f_\alpha$ which is
%   $\Max\{\aleph_0,\lambda\}$-generated. Note that $K_1$ satisfy that
%   for each $x \in K$ there exists a morphism $f:G \rightarrow K_1$
%   such that $f(x)=x$. Now we can repeat this argument recursively to
%   obtain a chain of $\Max\{\aleph_0,\lambda\}$-generated submodules
%   of $M$, $K_1 \leq K_2 \leq \cdots$, such that $K \leq K_1$ and for
%   each nonzero natural number $n$ and $x \in K_n$, there exists
%   $f\colon G \rightarrow K_{n+1}$ such that $f(x)=x$. Then the proof
%   is completed by taking $L = \bigcup_{n < \omega}K_n$.
% \end{proof}

This section is devoted to prove Theorem \ref{Intr.Thm 1}. We begin
with a technical lemma. In general, if $f_1$ and $f_2$ are split monomorphisms in the commutative diagram
\begin{displaymath}
  \begin{tikzcd}
    K' \arrow{r}{i} \arrow{d}{f_1} & M' \arrow{d}{f_2}\\
    K \arrow{r}{k} & M
  \end{tikzcd}
\end{displaymath}then the splittings of $f_1$
and $f_2$ need not make the diagram commutative. The following lemma
constructs a splitting which makes the diagram commutative in some
particular cases.

\begin{lem}\label{l:lemma1}
  Consider the following diagram in $\ModR$ with exact rows
  \begin{displaymath}
    \begin{tikzcd}
      0 \arrow{r} & K' \arrow[hook]{r}{i} \arrow{d}{f_1} & M'
      \arrow{r}{j} \arrow{d}{f_2}& L'
      \arrow{r} \arrow{d}{f_3} & 0\\
      0 \arrow{r} & K \arrow{r}{k} & M
      \arrow{r}{l} & L \arrow{r} & 0\\
    \end{tikzcd}
  \end{displaymath}
  such that $f_1$, $f_3$ and $k$ are splitting monomorphisms. Then for
  any splitting $\overline f_1$ of $f_1$, there exists a splitting
  $\overline f_2$ of $f_2$ such that $i\overline f_1=\overline f_2k$.
\end{lem}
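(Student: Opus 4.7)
The plan is to build $\overline{f_2}$ as a sum of two maps corresponding to a chosen splitting of the bottom row. Since $k$ is a split monomorphism, I would fix a retraction $p\colon M\to K$ of $k$; then $l$ restricts to an isomorphism $\ker p\xrightarrow{\sim} L$, and its inverse gives a section $s\colon L\to M$ of $l$ characterised by $ls=1_L$ and $ps=0$, producing the internal decomposition $M=k(K)\oplus s(L)$. I would also fix a retraction $\overline{f_3}\colon L\to L'$ of $f_3$, which is given to us. The candidate retraction is
$$\overline{f_2}(m):=i\,\overline{f_1}(p(m))+\psi(l(m))$$
for a map $\psi\colon L\to M'$ to be determined. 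Regardless of $\psi$, the left-square commutativity $\overline{f_2}k=i\,\overline{f_1}$ is immediate from $pk=1_K$ and $lk=0$; so one equation is free, and everything has to be arranged so that $\overline{f_2}f_2=1_{M'}$.

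Using $lf_2=f_3 j$, the retraction identity reduces to requiring
$$\psi\,f_3\,j=\phi,\qquad\text{where}\qquad \phi(m'):=m'-i\,\overline{f_1}(p\,f_2(m')).$$
The essential observation is that $\phi$ kills $i(K')=\ker j$: using the given relations $f_2 i=k f_1$ and $pk=1_K$ one gets $pf_2 i=f_1$, so $\phi\,i=i-i\,\overline{f_1}f_1=0$. Hence $\phi$ factors through $j$ as $\phi=\phi'j$ for a unique $\phi'\colon L'\to M'$, and it remains only to solve $\psi f_3=\phi'$. Because $f_3$ splits with retraction $\overline{f_3}$, the choice $\psi:=\phi'\,\overline{f_3}$ works, since then $\psi f_3 j=\phi'\overline{f_3}f_3 j=\phi' j=\phi$.

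The main obstacle here is the compatibility between the two requirements: most retractions of $f_2$ (which exist if one can show $f_2$ splits) will fail the left-square condition, while naive extensions of $i\,\overline{f_1}$ off the summand $k(K)$ fail to be a retraction of $f_2$. The argument above succeeds because the defect $\phi$ measuring how far $i\,\overline{f_1}\circ p$ is from being a retraction automatically vanishes on $\ker j$; it is exactly this vanishing that allows $f_3$ (via its splitting) to supply the correction. The splittings of $k$ and $f_3$ thus play complementary roles: $k$ provides the direct sum decomposition of $M$ on which the two summands of $\overline{f_2}$ live, and $f_3$ supplies the adjustment term $\psi$ that repairs $\overline{f_2}f_2=1_{M'}$ without disturbing $\overline{f_2}k=i\,\overline{f_1}$.
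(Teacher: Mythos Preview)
Your proof is correct and takes essentially the same approach as the paper: both decompose $M$ via the splitting of $k$, both rely on the fact that $\overline{f_1}\,p\,f_2$ is a retraction of $i$ (your computation $\phi\,i=0$ is exactly this, and the paper states it as $\overline{i}:=\overline{f_1}\,\overline{k}\,f_2$ splitting $i$), and both then use the splitting of $f_3$ to complete the construction. The only difference is packaging: the paper first produces a section $\overline{j}$ of $j$ and a compatible section $\overline{l}$ of $l$ and then defines $\overline{f_2}$ piecewise on $k(K)\oplus\overline{l}(L)$, whereas you write the ansatz $\overline{f_2}=i\,\overline{f_1}\,p+\psi\,l$ and solve for $\psi=\phi'\overline{f_3}$ directly---your $\phi'$ is precisely the paper's $\overline{j}$.
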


\begin{proof}
  Let $\overline f_3$ be a splitting of $f_3$ and $\overline k$ a
  splitting of $k$. Then
  $\overline i:=\overline f_1 \overline k f_{2}$ is a splitting of $i$
  and, consequently, there exists $\overline j$ a splitting of $j$.

\noindent
Now let $l'$ be a splitting of $l$ and note that
$L = \Img f_3 \oplus N$ for some $N$. Denoting by $\overline l$ the
direct sum of the morphisms
$f_2\overline j (\overline f_3 \rest \Img f_3)\mathcolon \Img f_3
\rightarrow M$
and $l' \rest N\mathcolon N \rightarrow M$, we obtain a splitting of
$l$ satisfying $\overline l f_3 = f_2\overline j$.

\noindent
Finally, notice that $M = \Img k \oplus \Img \overline l$. Define
$\overline f_{2}$ as the direct sum of the morphisms
$i\overline f_1\overline k\mathcolon\Img k \rightarrow M'$ and
$\overline j \overline f_3 l\mathcolon\Img \overline l \rightarrow
M'$.
It is easy to see that $\overline f_{2}$ is a splitting of
$f_{2}$ satisfying $i \overline f_1 = \overline f_2 k$.
\end{proof}

In general, the union of a chain of direct summands of a module is not
a direct summand. The following lemma shows a situation in which the
union of a continuous chain of direct summands is a direct summand.

\begin{lem}\label{l:split}
  Let $G$ be a module and $M$ a submodule of $G$. Suppose that there
  exist an ordinal $\kappa$, a continuous chain $\{M_\alpha\mid \alpha < \kappa\}$ of submodules of $M$,
   with
  $M = \bigcup_{\alpha < \kappa}M_\alpha$, and a continuous chain $\{G_\alpha\mid \alpha < \kappa\}$ of
  submodules of $G$,  such that:
  \begin{enumerate}[label=(\alph*)]
  \item $\bigcup_{\alpha < \kappa}G_\alpha$ is a direct summand of
    $G$;

  \item $G_\alpha$ is a direct summand of $G_{\alpha+1}$ for each
    $\alpha < \kappa$;

  \item $M_\alpha=M_{\alpha+1}\cap G_\alpha$ for each
    $\alpha < \kappa$;

  \item $M_0$ is a direct summand of $G_0$ and
    $\frac{M_{\alpha+1}+G_\alpha}{G_\alpha}$ is a direct summand of
    $\frac{G_{\alpha+1}}{G_\alpha}$ for each $\alpha < \kappa$.
  \end{enumerate}
  Then $M$ is a direct summand of $G$.
\end{lem}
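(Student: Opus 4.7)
My plan is to construct, by transfinite recursion on $\alpha < \kappa$, a compatible family of retractions $\pi_\alpha \colon G_\alpha \to M_\alpha$ of the inclusions $M_\alpha \hookrightarrow G_\alpha$. First note that $M_\alpha \subseteq G_\alpha$ for every $\alpha$: it holds at $0$ by (d), is inherited at successors via (c) (which in particular forces $M_{\alpha+1}+G_\alpha \subseteq G_{\alpha+1}$ once (d) is taken into account), and is preserved by continuity at limits. Here ``compatible'' will mean that for $\alpha<\beta$ the restriction $\pi_\beta\rest G_\alpha$ equals $\pi_\alpha$ followed by the inclusion $M_\alpha \hookrightarrow M_\beta$. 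Once the family is built I glue the $\pi_\alpha$ into a retraction $\pi\colon G' \to M$, where $G' = \bigcup_{\alpha<\kappa} G_\alpha$ (well-definedness follows from compatibility together with continuity of the two chains), and then precompose with the retraction $G \to G'$ provided by (a) to conclude that $M$ is a direct summand of $G$.

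The base case $\alpha=0$ is exactly the first half of (d). For the successor step, given $\pi_\alpha$, I would apply Lemma \ref{l:lemma1} to the commutative diagram with exact rows
\[
0 \to M_\alpha \to M_{\alpha+1} \to M_{\alpha+1}/M_\alpha \to 0,
\]
\[
0 \to G_\alpha \to G_{\alpha+1} \to G_{\alpha+1}/G_\alpha \to 0,
\]
whose verticals are the inclusions. Condition (c) is precisely what says that $M_{\alpha+1}/M_\alpha \to G_{\alpha+1}/G_\alpha$ is injective with image $(M_{\alpha+1}+G_\alpha)/G_\alpha$, and this image is a direct summand by the second half of (d); so the right vertical splits. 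The left vertical is split by the given $\pi_\alpha$ and the bottom horizontal inclusion is split by (b). Lemma \ref{l:lemma1} then produces a splitting $\pi_{\alpha+1}\colon G_{\alpha+1} \to M_{\alpha+1}$, and the relation $i\overline{f_1} = \overline{f_2}k$ delivered by that lemma is exactly the compatibility of $\pi_{\alpha+1}$ with $\pi_\alpha$. For a limit ordinal $\lambda<\kappa$, continuity of $\{G_\alpha\}_{\alpha<\lambda}$ gives $G_\lambda=\bigcup_{\alpha<\lambda}G_\alpha$, so I define $\pi_\lambda(g)=\pi_\alpha(g)$ whenever $g\in G_\alpha$; well-definedness is immediate from the inductive compatibility, and the result is a retraction because $M_\lambda=\bigcup_{\alpha<\lambda}M_\alpha$ by the continuity of $\{M_\alpha\}$.

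I expect the delicate point to be the successor step, where I need the three splitting hypotheses of Lemma \ref{l:lemma1} to hold \emph{simultaneously}: this is precisely why conditions (b), (c), (d) are imposed in the statement, and in particular why (c) is posed as the equality $M_\alpha=M_{\alpha+1}\cap G_\alpha$ rather than the weaker inclusion, since the equality is exactly what identifies $M_{\alpha+1}/M_\alpha$ with the direct summand $(M_{\alpha+1}+G_\alpha)/G_\alpha$ of $G_{\alpha+1}/G_\alpha$. Once the successor step is settled, the limit step and the final passage from $G'$ to $G$ are essentially bookkeeping.
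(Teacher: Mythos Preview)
Your proposal is correct and follows essentially the same route as the paper's proof: both build a compatible system of retractions $\pi_\alpha\colon G_\alpha\to M_\alpha$ by transfinite recursion, handle the successor step by invoking Lemma~\ref{l:lemma1} on the two-row diagram with rows $0\to M_\alpha\to M_{\alpha+1}\to M_{\alpha+1}/M_\alpha\to 0$ and $0\to G_\alpha\to G_{\alpha+1}\to G_{\alpha+1}/G_\alpha\to 0$ (using (b), (c), (d) to verify its hypotheses), take the direct limit at limit ordinals, and finish via (a). The only cosmetic difference is that the paper first draws a $3\times 3$ snake diagram to justify that the induced map $M_{\alpha+1}/M_\alpha\to G_{\alpha+1}/G_\alpha$ is a split monomorphism, whereas you argue this directly from (c) and (d).
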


\begin{proof}
  Denote by $f_\alpha\mathcolon M_\alpha \rightarrow G_\alpha$ the
  inclusion for each $\alpha < \kappa$. We are going to construct, for
  each $\alpha < \kappa$, a morphism
  $\overline f_\alpha\mathcolon G_\alpha \rightarrow M_\alpha$ such
  that $\overline f_\alpha f_\alpha = 1_{M_\alpha}$ and
  $i_{\gamma\alpha} \overline f_\gamma = \overline f_\alpha
  j_{\gamma\alpha}$
  for each $\gamma < \alpha$, where
  $i_{\gamma\alpha}\mathcolon M_\gamma \rightarrow M_{\alpha}$ and
  $j_{\gamma\alpha}\mathcolon G_\gamma \rightarrow G_{\alpha}$ are the
  inclusions. Then the direct limit map of the $\overline f_\alpha$'s
  is a splitting of the inclusion
  $M \hookrightarrow \bigcup_{\alpha < \kappa}G_\alpha$, which implies
  that $M$ is a direct summand of $G$ by (a).
  
\noindent
We shall make the construction recursively on $\alpha$. If $\alpha=0$,
take a splitting $\overline f_0\mathcolon G_0 \rightarrow M_0$ of
$f_0$. If $\alpha$ is limit, let $\overline f_\alpha$ be the direct
limit of the system $\{\overline f_\gamma\mid \gamma < \alpha\}$.
Finally, suppose that $\alpha$ is succesor, say $\alpha = \mu+1$.
Using the snake lemma we can construct the following commutative
diagram,
\[\begin{tikzcd}
  0 \arrow{r} & M_\mu \arrow[hook]{r}{f_\mu}
  \arrow[hook]{d}{i_{\mu\mu+1}} & G_\mu \arrow{r}{g_\mu}
  \arrow[hook]{d}{j_{\mu \mu+1}}& \frac{G_\mu}{M_\mu}
  \arrow{r} \arrow{d}{k_\mu} & 0\\
  0 \arrow{r} & M_{\mu+1} \arrow[hook]{r}{f_{\mu+1}} \arrow{d}{p_\mu}
  & G_{\mu+1}
  \arrow{r}{g_{\mu+1}} \arrow{d}{q_\mu} & \frac{G_{\mu+1}}{M_{\mu+1}} \arrow{r} \arrow{d}{r_\mu}& 0\\
  0 \arrow{r} & \frac{M_{\mu+1}}{M_\mu} \arrow{r}{h_{\mu}} &
  \frac{G_{\mu+1}}{G_\mu} \arrow{r}{l_\mu} &
  \frac{G_{\mu+1}}{M_{\mu+1}+G_\mu} \arrow{r} & 0
\end{tikzcd}\]
in which $k_\mu$ is monic, since $M_{\mu+1} \cap G_\mu = M_\mu$ by (c)
and, consequently, $h_\mu$ is monic too. But
$h_\mu(x+M_\mu) = x+G_\mu$ for each $x \in M_{\mu+1}$, which means
that $\Img h_\mu = \frac{M_{\mu+1}+G_\mu}{G_\mu}$. By (d), $h_\mu$
splits. Since $j_{\mu\mu+1}$ splits by (b) and $f_\mu$ splits by induction
hyphotesis, we can apply Lemma \ref{l:lemma1} to the splitting
$\overline f_\mu$ of $f_\mu$ to construct a spliting $\overline f_{\mu+1}$ of $f_{\mu+1}$ such that
$\overline f_{\mu+1}j_{\mu\mu+1}=i_{\mu\mu+1}\overline f_{\mu}$. This
concludes the construction.
\end{proof}

The proof of Theorem \ref{Intr.Thm 1} relies on the following lemmas.

\begin{lem}\label{l:locally-split-epi}
  Let $\lambda$ be an infinite cardinal and
  $g\mathcolon G \rightarrow M$ be a locally split epimorphism. Then
  for each $\leq \lambda$-generated submodule $C$ of $M$, there exists a
  $\leq \lambda$-generated submodule $D$ of $G$ such that $C\subseteq g(D)$
  and $g\rest D\colon D \rightarrow g(D)$ satisfies the following
  property: for each $x \in g(D)$, there exists $f\mathcolon M \rightarrow G$
  such that $gf(x)=x$ and $fg(D) \leq D$.

  If, in addition, $G=\oplus_{i \in I}G_i$ for a family
  $\{G_i:i \in I\}$ of countably generated modules, then there exists
  a subset $J$ of $I$ of cardinality less than or equal to $\lambda$
  such that $D=\oplus_{j \in J}D_j$.
\end{lem}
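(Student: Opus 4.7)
The plan is to construct $D$ as the union $\bigcup_{n<\omega}D_n$ of an increasing chain of $\leq\lambda$-generated submodules of $G$, designed so that each splitting of $g$ chosen at step $n$ continues to behave well on the final $D$. First I would pick $D_0$ to be a $\leq\lambda$-generated submodule of $G$ with $C\subseteq g(D_0)$, taking one preimage in $G$ for each of the $\leq\lambda$ generators of $C$ and letting $D_0$ be the submodule they generate.

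For the inductive step, assume $D_n$ has been built and fix a generating set $\{y_\alpha^{(n)}:\alpha<\lambda\}$ of $g(D_n)$, which exists because $g(D_n)$ is $\leq\lambda$-generated. Using the characterisation of locally split epimorphisms recalled just before the lemma (from \cite[Corollary 2]{Azumaya}), for every finite subset $F\subseteq\{y_\alpha^{(n)}:\alpha<\lambda\}$ pick a map $f_{n,F}\colon M\to G$ with $gf_{n,F}(y)=y$ for all $y\in F$. Then define
\[
D_{n+1}\;:=\;D_n\;+\;\sum_{\substack{m\leq n\\ F\subseteq\{y_\alpha^{(m)}\}\text{ finite}}}f_{m,F}\bigl(g(D_n)\bigr).
\]
Since the index set has cardinality $\omega\cdot\lambda^{<\omega}=\lambda$ and each summand is $\leq\lambda$-generated, $D_{n+1}$ remains $\leq\lambda$-generated. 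Put $D:=\bigcup_{n<\omega}D_n$. Given $x\in g(D)$, choose $n$ with $x\in g(D_n)$ and a finite $F\subseteq\{y_\alpha^{(n)}\}$ expressing $x$ as an $R$-linear combination of its elements; then $gf_{n,F}(x)=x$ by linearity. A case split on whether $k\geq n$ or $k<n$, combined with the definition of $D_{k+1}$ in the first case and with $D_k\subseteq D_n$ in the second, yields $f_{n,F}(g(D_k))\subseteq D$ for every $k<\omega$; taking the union gives $f_{n,F}\,g(D)\subseteq D$, as required.

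The main obstacle, and the reason the sum in the definition of $D_{n+1}$ must range over all $m\leq n$ rather than only $m=n$, is the persistence of the splittings: at stage $n$ one knows only that $f_{n,F}$ splits $g$ on a finite set, yet one must guarantee that the same $f_{n,F}$ sends \emph{all} of $g(D)$, including pieces introduced at later stages, back into $D$. Building closure under every previously chosen splitting into each inductive step is what makes the union well-behaved. For the final assertion, when $G=\bigoplus_{i\in I}G_i$ with each $G_i$ countably generated, I would run the same recursion but at every stage enlarge the just-constructed $D_n$ to $\bigoplus_{j\in J_n}G_j$, where $J_n\subseteq I$ is chosen so that $|J_n|\leq\lambda$ and $D_n\subseteq\bigoplus_{j\in J_n}G_j$; such $J_n$ exists because any $\leq\lambda$-generated submodule of $\bigoplus_i G_i$ is supported on at most $\lambda$ indices, the support of each generator being finite. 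Then $D=\bigoplus_{j\in J}G_j$ with $J:=\bigcup_n J_n$ satisfies $|J|\leq\lambda$, and the closure property carries over since enlarging each $D_n$ only enlarges the ambient generating sets to which the same inductive closure is applied.
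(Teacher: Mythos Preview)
Your proof is correct and follows essentially the same approach as the paper's: both build $D$ as a countable union of $\leq\lambda$-generated submodules, at each stage choosing finitely-indexed local splittings via \cite[Corollary~2]{Azumaya} and closing under all previously chosen splittings so that the final verification splits into the cases $k\geq n$ and $k<n$ exactly as you describe. The only cosmetic difference is that the paper packages the splittings into an increasing chain of sets $H_n\subseteq\Hom_R(M,G)$ with $H_n\subseteq H_{n+1}$ and sets $D_{n+1}=\sum_{f\in H_n}f(g(D_n))$, whereas you achieve the same closure by letting the sum defining $D_{n+1}$ range over all $m\leq n$ and including $D_n$ explicitly; your treatment of the direct-sum refinement is likewise the same idea, phrased slightly differently.
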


\begin{proof}
  We are going to construct a chain $\{D_n:n < \omega\}$ of $\leq \lambda$-generated submodules
  of $G$  with $C \leq g(D_0)$, and a
  chain $\{H_n:n < \omega\}$ of subsets of $\Hom_R(M,G)$ such that
  $|H_n|\leq \lambda$, verifying the following properties for every $n < \omega$:
  \begin{enumerate}[label=(\alph*)]
  \item given $x \in g(D_n)$ there exists $f \in H_n$ with $gf(x)=x$,

  \item and $fg(D_n) \leq D_{n+1}$ for each $f \in H_n$.
  \end{enumerate}
  We shall make the construction recursively on $n$. For $n = 0$ let
  $D_0$ be a $\lambda$-generated submodule of $G$ such that
  $C \leq g(D_0)$ and take a generating set
  $\{x_\alpha^0:\alpha < \lambda\}$ of $g(D_0)$. Since $g$ is locally
  split, for each finite subset $\Gamma$ of $\lambda$ there exists, by \cite[Corollary 2]{Azumaya}, a
  morphism $f^0_\Gamma \dd M \rightarrow G$ such that
  $gf^0_\Gamma(x^0_\gamma) = x^0_\gamma$ for each $\gamma \in \Gamma$.
  Then the set
  $H_0:=\{f^0_\Gamma:\Gamma \subseteq \lambda \textrm{ is finite}\}$
  has cardinality less than or equal to $\lambda$ and for each
  $x \in g(C)$, there exists $f \in H_0$ with $gf(x)=x$.

\noindent
Assume that we have constructed $D_n$ and $H_n$ for some
$n<\omega$. Let $D_{n+1}=\sum_{f \in H_n}f(g(D_n))$, which is a
$\leq \lambda$-generated submodule (since $|H_n| \leq \lambda$ and $D_n$ is
$\leq \lambda$-generated), and satisfies $fg(D_n) \leq D_{n+1}$ for
each $f \in H_n$.  Let $\{x_\alpha^{n+1}:\alpha < \lambda\}$ be a
generating system of $g(D_{n+1})$. Again by \cite[Corollary 2]{Azumaya} since $g$ is locally split, there
exists,  for each $\Gamma \subseteq \lambda$ finite, a morphism
$f^{n+1}_\Gamma\dd M \rightarrow G$ such that
$gf^{n+1}_\Gamma(x^{n+1}_\gamma) = x^{n+1}_\gamma$ for each
$\gamma \in \Gamma$. Set
$H_{n+1}=\{f^{n+1}_\Gamma:\Gamma \subseteq \lambda \textrm{ is
  finite}\}\cup H_n$. Then $H_{n+1}$
has cardinality less than or equal to $\lambda$ and satisfies
that for each $x \in g(D_{n+1})$ there exists $f \in H_{n+1}$ with
$gf(x)=x$. This concludes the construction.

  \noindent
  Now take $D=\bigcup_{n < \omega}D_n$, which is
  $\leq \lambda$-generated. Given $x \in g(D)$, there exists $n < \omega$
  such that $x \in g(D_n)$. By (a) there exists $f \in H_n$ such that
  $fg(x)=x$. Moreover, this $f$ satisfies that $fg(D) \leq D$ since,
  given $k < \omega$ with $k \geq n$, $fg(D_k) \leq D_{k+1} \leq D$ by
  (b), because $f \in H_k$. And, if $k < n$, then $D_k \leq D_n$ which
  implies that $f(D_k) \leq D_{n+1}$ by (b) again. This finishes the
  proof of the first part of the lemma.

\noindent
In order to prove the last statement simply note that, if $G$ is a
direct sum of countably generated modules, $G=\oplus_{i \in I}G_i$,
then, in the previous construction, $D_n$ can be taken of the form
$\oplus_{i \in I_n}G_i$ for some subset $I_n$ of $I$ satisfying
$I_n \subseteq I_{n+1}$ for each $n < \omega$.
\end{proof}

\begin{lem}\label{l:SuccesorStep}
  Let
  \[\begin{tikzcd}
    0 \arrow{r} & M \arrow[hook]{r} & G \arrow{r}{g} & M \arrow{r} & 0
  \end{tikzcd}\]
  be a short exact sequence in $\ModR$ such that $g$ is locally split and $G$ is
  a direct sum of a family $\{G_i\mid i \in I\}$ of countably
  generated modules. Then, for each countably generated submodule $K$
  of $M$, there exists a countable subset $J$ of $I$ such that
  $K \leq \oplus_{j \in J}G_j$,
  $g(\oplus_{j \in J}G_j) = M \cap (\oplus_{j \in J}G_j)$ and the
  sequence
  \[\begin{tikzcd}
    0 \arrow{r} & M \cap (\oplus_{j \in J}G_j) \arrow[hook]{r}&
    \oplus_{j \in J}G_j \arrow{r}{g} & g(\oplus_{j \in J}G_j)
    \arrow{r} & 0
  \end{tikzcd}\] is split exact.
\end{lem}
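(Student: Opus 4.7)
The plan is to construct $J$ as the union of a countable chain $J^{(0)} \subseteq J^{(1)} \subseteq \cdots$ of countable subsets of $I$ so that the corresponding summands $D^{(k)} = \bigoplus_{j \in J^{(k)}} G_j$ satisfy, at each stage $k$: $g(D^{(k)}) \subseteq D^{(k)}$ (as subset of $G$), the restriction $g\rest D^{(k)} \colon D^{(k)} \to g(D^{(k)})$ is a split epimorphism, and (for $k \geq 1$) $M \cap D^{(k-1)} \subseteq g(D^{(k)})$. Then $D = \bigoplus_{j \in J} G_j$ with $J = \bigcup_k J^{(k)}$ will inherit $K \subseteq D$, the equality $g(D) = M \cap D$, and the splittness of the restricted sequence.

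To build $D^{(0)}$ I run a single $\omega$-iteration of sub-steps. Let $J^{(0)}_0$ be countable and contain $\operatorname{supp}(K)$. Given $J^{(0)}_n$, I apply Lemma \ref{l:locally-split-epi} with $\lambda = \aleph_0$ to the countably generated submodule $g(D^{(0)}_n)$ of $M$; its ``in addition'' clause produces a countable $\tilde J^{(0)}_n \subseteq I$ and $\tilde D^{(0)}_n = \bigoplus_{j \in \tilde J^{(0)}_n} G_j$ with the closure-under-splittings property. I then set $J^{(0)}_{n+1} = J^{(0)}_n \cup \tilde J^{(0)}_n \cup \operatorname{supp}(g(D^{(0)}_n))$, which is countable and forces $g(D^{(0)}_n) \subseteq D^{(0)}_{n+1}$. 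At the union $D^{(0)}$, given $x \in g(D^{(0)}) = \bigcup_n g(D^{(0)}_n)$, I pick $n$ with $x \in g(D^{(0)}_n)$ and take the $f \colon M \to G$ supplied by the lemma; since $x = g(y)$ for some $y \in \tilde D^{(0)}_n$ we have $f(x) = fg(y) \in \tilde D^{(0)}_n \subseteq D^{(0)}$. Using the projection $\pi \colon G \to D^{(0)}$ along $\bigoplus_{i \notin J^{(0)}} G_i$, the module map $s \colon g(D^{(0)}) \to D^{(0)}$ defined by $s(y) = \pi(f(y))$ satisfies $g(s(x)) = g(f(x)) = x$. Hence $g\rest D^{(0)}$ is locally split, and because $g(D^{(0)})$ is countably generated this actually splits by \cite[Proposition 11]{ZH}. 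In particular $M \cap D^{(0)} = \ker(g\rest D^{(0)})$ is a direct summand of $D^{(0)}$, hence countably generated.

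To pass from $D^{(k)}$ to $D^{(k+1)}$ I fix a countable generating set $\{m_i\}_{i < \omega}$ of the (inductively) countably generated module $M \cap D^{(k)}$, and for each $i$ use local splittness of $g$ to pick $y_i \in G$ with $g(y_i) = m_i$. I then start the next $\omega$-iteration from $J^{(k+1)}_0 \supseteq J^{(k)} \cup \bigcup_i \operatorname{supp}(y_i)$ and repeat the procedure of the previous paragraph. The resulting $D^{(k+1)}$ satisfies the same three properties as $D^{(0)}$, and additionally every $m_i = g(y_i)$ lies in $g(D^{(k+1)})$, so $M \cap D^{(k)} \subseteq g(D^{(k+1)})$.

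Finally, for $D = \bigcup_k D^{(k)} = \bigoplus_{j \in J} G_j$ with $J$ countable: the inclusion $K \subseteq D$ is immediate, $g(D) \subseteq D$ (hence $g(D) \subseteq M \cap D$) is inherited from each stage, and conversely each $m \in M \cap D$ lies in some $M \cap D^{(k)}$ and therefore in $g(D^{(k+1)}) \subseteq g(D)$, yielding $g(D) = M \cap D$. The same local-splittness argument as in the construction of $D^{(0)}$ applies to $g\rest D$, and combined with countable generation of $g(D)$ produces the desired splitting via \cite[Proposition 11]{ZH}. The main obstacle I expect is precisely obtaining $g(D) = M \cap D$ rather than just the splittness of the restricted sequence: for a general countably generated $D_n$ the intersection $M \cap D_n$ need not itself be countably generated over an arbitrary ring, so one cannot simply list its elements and process them in a single $\omega$-iteration. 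The two-level strategy above circumvents this by first splitting $g\rest D^{(k)}$, which forces $M \cap D^{(k)}$ to be a countably generated direct summand of $D^{(k)}$, so that at the next phase its countable generating set can be covered by a countable family of preimages.
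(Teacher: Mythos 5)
Your proof is correct and follows essentially the same route as the paper: apply Lemma \ref{l:locally-split-epi}, use \cite[Proposition 11]{ZH} to upgrade the locally split restriction of $g$ to a countable subsum $D'$ into an actual splitting (so that $M\cap D'$ is a countably generated direct summand), absorb preimages of its generators into a larger countable subsum, and iterate. The only difference is bookkeeping: you organize the construction as a nested $\omega\times\omega$ iteration (an outer $k$-loop to capture intersections and an inner $n$-loop to achieve the split restriction), while the paper runs a single interleaved chain $I_n\subseteq J_n\subseteq I_{n+1}$ that performs both tasks at each step; the resulting countable $J$ and the verification at the union are the same.
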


\begin{proof}
  We are going to construct two chains of countable subsets of $I$,
  $\{I_n\mid n < \omega\}$ and $\{J_n\mid n < \omega\}$, such that
  $K \leq \oplus_{i \in I_0}G_i$, and a chain of subsets of
  $\Hom_R(M,G)$, $\{H_n:n < \omega\}$, satisfying, for each
  $n < \omega$:
  \begin{enumerate}[label=(\alph*)]
  \item $I_n \leq J_n \leq I_{n+1}$;

  \item
    $M \cap \left(\oplus_{i \in I_n}G_i\right) \leq g(\oplus_{i \in
      J_n}G_i) \leq \oplus_{i \in I_{n+1}}G_i$,

  \item and for each $x \in g(\oplus_{i \in I_n}G_i)$ there exists
    $f \in H_n$ such that $gf(x)=x$ and
    $f(g(\oplus_{i \in I_n}G_i)) \leq \oplus_{i \in I_{n}}G_i$.
  \end{enumerate}
  We shall make the construction recursively on $n$. For $n=0$ let
  $L_0$ be a countable subset of $I$ such that $K \leq \oplus_{i \in
    L_0}G_i$. Now apply
  Lemma \ref{l:locally-split-epi} to get a countable subset $I_0$ of
  $I$ containing $L_0$ such that $g(\oplus_{i \in L_0}G_i) \leq g(\oplus_{i \in I_0}G_i)$, and a subset
  $H_0$ of $\Hom_R(M,G)$ satisfying that for each
  $x \in g(\oplus_{i \in I_0}G_i)$ there exists $f \in H_0$ with
  $gf(x)=x$ and
  $f(g(\oplus_{i \in I_0}G_i)) \leq \oplus_{i \in I_{0}}G_i$. In
  particular, $g \upharpoonright \oplus_{i \in I_0}G_i$ is locally
  split and, as $g(\oplus_{i \in I_0}G_i)$ is countably generated,
  $g \upharpoonright \oplus_{i \in I_0}G_i$ is actually split. This
  means that $M \cap (\oplus_{i \in I_0}G_i)$ is a direct summand of
  $\oplus_{i \in I_0}G_i$ and, consequently, it is countably
  generated. Therefore, there exists a countable subset $J_0$ of
  $I$ containing $I_0$ such that
  $M \cap (\oplus_{i \in I_0}G_i) \leq g(\oplus_{i \in J_0}G_i)$. This
  concludes case $n=0$.

\noindent
Now assume that we have constructed $I_n$ and $J_n$ for some
$n < \omega$, and let us construct $I_{n+1}$, $J_{n+1}$ and
$H_{n+1}$. Since $g(\oplus_{i \in J_n}G_i)$ is countably generated
there exists a countable subset $L_n$ containing $J_n$ such that
$g(\oplus_{i \in J_n}G_i) \leq \oplus_{i \in L_n}G_i$. Then we can
apply again Lemma \ref{l:locally-split-epi} to find a countable subset
$I_{n+1}$ of $I$ containing $L_n$ such that
$g(\oplus_{i \in L_n}G_i) \leq g(\oplus_{i \in I_{n+1}}G_i)$, and a
subset $H_{n+1}$ of $\Hom_R(M,G)$ satisfying that for each
$x \in g(\oplus_{i \in I_{n+1}}G_i)$ there exists $f \in H_{n+1}$ with
$gf(x)=x$ and
$f(g(\oplus_{i \in I_{n+1}}G_i)) \leq \oplus_{i \in I_{n+1}}G_i$. In
particular, $g \upharpoonright \oplus_{i \in I_{n+1}}G_i$ is locally
split and, as $g(\oplus_{i \in I_{n+1}}G_i)$ is countably generated,
$g \upharpoonright \oplus_{i \in I_{n+1}}G_i$ is actually split. This
means that $M \cap (\oplus_{i \in I_{n+1}}G_i)$ is a direct summand of
$\oplus_{i \in I_{n+1}}G_i$ and so it is countably
generated. Consequently, there exists a countable subset $J_{n+1}$ of
$I$ containing $I_{n+1}$ such that
$M \cap (\oplus_{i \in I_{n+1}}G_i) \leq g(\oplus_{i \in
  J_{n+1}}G_i)$. This concludes the construction.

\noindent
Finally, let $J=\bigcup_{n<\omega}J_n=\bigcup_{n<\omega}I_n$. Then, by
(b), $g(\oplus_{j \in J}G_j) = M \cap \oplus_{j \in J}G_j$ so that the
sequence
\[\begin{tikzcd}
  0 \arrow{r} & M \cap (\oplus_{j \in J}G_j) \arrow[hook]{r}&
  \oplus_{j \in J}G_j \arrow{r}{g} & g(\oplus_{j \in J}G_j) \arrow{r}
  & 0
\end{tikzcd}\]is
exact. Moreover, as a consequence of (c),
$g \rest \oplus_{j \in J}G_j$ is locally split and, since
$g(\oplus_{j \in J}G_j)$ is countably generated, it is actually
split. This concludes the proof.
\end{proof}

We are now in position to prove Theorem \ref{Intr.Thm 1}.

\begin{thm}\label{t:SplittingSequence}
  Any short exact sequence
  \[\begin{tikzcd}
    0 \arrow{r} & M \arrow[hook]{r} & G \arrow{r}{g} & M \arrow{r} & 0
  \end{tikzcd}\]
  in which $g$ is locally split and $G$ is a direct sum of countably
  generated modules is split.
\end{thm}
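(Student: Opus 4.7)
The strategy is transfinite induction aimed at verifying the hypotheses of Lemma \ref{l:split}. Write $G = \bigoplus_{i \in I} G_i$ with each $G_i$ countably generated, enumerate $I = \{i_\alpha : \alpha < \kappa\}$, and build a continuous chain $\{I_\alpha : \alpha \le \kappa\}$ of subsets with $I_\kappa = I$ and $|I_{\alpha+1}\setminus I_\alpha|\le\omega$. Setting $G_\alpha := \bigoplus_{i \in I_\alpha} G_i$ and $M_\alpha := M \cap G_\alpha$, hypotheses (a)--(c) of Lemma \ref{l:split} are immediate from the direct-sum structure and the choice of $M_\alpha$. The base case of (d) (that $M_0$ be a direct summand of $G_0$) comes from Lemma \ref{l:SuccesorStep} applied with $K = 0$; limit ordinals are handled by unions. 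The inductive hypothesis to maintain is that the sequence $0 \to M_\alpha \to G_\alpha \to M_\alpha \to 0$ is split by some $s_\alpha\colon M_\alpha\to G_\alpha$, and the entire argument lies at successor steps.

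At a successor step $\alpha+1$, writing $G = G_\alpha \oplus G'$ with $G' := \bigoplus_{i \in I\setminus I_\alpha}G_i$ and $G_\alpha = M_\alpha \oplus K_\alpha$ where $K_\alpha := s_\alpha(M_\alpha)$, one verifies directly the decomposition $M = M_\alpha \oplus M_1$ with $M_1 := M \cap (K_\alpha \oplus G')$. The key auxiliary claim is that the induced epimorphism $\overline g\colon G/G_\alpha \to M/M_\alpha$ is locally split. Given $\overline x = x + M_\alpha$ with $x = x_\alpha + x_1 \in M_\alpha \oplus M_1$, I would pick $h\colon M \to G$ satisfying $gh(x_1) = x_1$ (by local splitness of $g$) and form
\[
h'(y) \;:=\; h\bigl(y - \pi_{M_\alpha}(y)\bigr) + s_\alpha\bigl(\pi_{M_\alpha}(y)\bigr),
\]
where $\pi_{M_\alpha}\colon M\to M_\alpha$ is the projection from the decomposition. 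A direct check gives $gh'(x) = x$ and $h'(M_\alpha) \subseteq G_\alpha$, so $h'$ descends to a homomorphism $\overline h\colon M/M_\alpha \to G/G_\alpha$ locally splitting $\overline g$ at $\overline x$.

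With $\overline g$ locally split, Lemma \ref{l:SuccesorStep} applies to the snake-lemma exact sequence $0 \to M/M_\alpha \to G/G_\alpha \to M/M_\alpha \to 0$ (whose middle term $G/G_\alpha = \bigoplus_{i \in I \setminus I_\alpha} G_i$ is a direct sum of countably generated modules) and yields a countable $J' \subseteq I \setminus I_\alpha$ containing $i_\alpha$ such that, setting $I_{\alpha+1} := I_\alpha \cup J'$, one has $g(G_{\alpha+1}) = M_{\alpha+1}$ and a splitting of $0 \to M_{\alpha+1}/M_\alpha \to G_{\alpha+1}/G_\alpha \to M_{\alpha+1}/M_\alpha \to 0$; identified with Row 3 of the snake diagram in the proof of Lemma \ref{l:split}, this is condition (d). Writing the splitting as $G_{\alpha+1}/G_\alpha = \tilde M \oplus Q$ (with $\tilde M$ the image of $M_{\alpha+1}/M_\alpha$) and lifting $Q$ to $G_{\alpha+1}$ via $G_{\alpha+1} = G_\alpha \oplus G_{J'}$, an explicit computation using the constraint $g(m)=0$ for $m \in M_{\alpha+1}$ shows that $K_\alpha \oplus Q$ is a complement of $M_{\alpha+1}$ in $G_{\alpha+1}$, yielding a splitting $s_{\alpha+1}$ extending $s_\alpha$ and carrying the induction forward. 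Lemma \ref{l:split} applied to the resulting chain then exhibits $M$ as a direct summand of $\bigcup_\alpha G_\alpha = G$, proving the theorem. The main obstacle is the local splitness of $\overline g$ at each successor step, resolved by the explicit decomposition $M = M_\alpha \oplus M_1$ supplied by the running splitting $s_\alpha$.
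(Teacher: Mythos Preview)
Your proposal is correct and follows essentially the same strategy as the paper: build a continuous chain of direct-sum summands $G_\alpha\subseteq G$ with $M_\alpha=M\cap G_\alpha=g(G_\alpha)$, maintain the splitting of $0\to M_\alpha\to G_\alpha\to M_\alpha\to 0$ as induction hypothesis, pass to the quotient sequence at successor steps, apply Lemma~\ref{l:SuccesorStep} there, and conclude via Lemma~\ref{l:split}. The only cosmetic differences are that the paper enumerates a generating set of $M$ rather than the index set $I$, and that where you verify the local splitness of $\overline g$ and the splitting of $M_{\alpha+1}\hookrightarrow G_{\alpha+1}$ by explicit formulas, the paper invokes Lemma~\ref{l:LocallySplitEpi} and Lemma~\ref{l:lemma1} respectively; your constructions are precisely what those lemmas produce when unwound in this setting.
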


\begin{proof}
  Write $G=\oplus_{i \in I}C_i$ as a direct sum of countably generated
  modules and fix $\{x_\alpha\mid \alpha < \kappa\}$ a generating
  system of $M$ for some cardinal $\kappa$. Our aim is to use Lemma \ref{l:split} with the
  submodule $M$ of $G$.  In order to construct the chain of submodules
  $\{M_\alpha\mid \alpha < \kappa\}$ and
  $\{G_\alpha\mid \alpha < \kappa\}$ satisfying the hypothesis of this
  lemma, we are going to apply recursively Lemma
  \ref{l:SuccesorStep}. Actually, we are going to construct a
  continuous chain of subsets of $I$,
  $\{I_\alpha\mid \alpha < \kappa\}$, such that
  $G_\alpha = \oplus_{i \in I_\alpha}C_i$.

\noindent
Let us make the construction of the chain $\{M_\alpha\mid \alpha < \kappa\}$ of submodules of $M$,
 and of the chain $\{I_\alpha\mid \alpha < \kappa\}$ of subsets of $I$ satisfying Lemma \ref{l:split}
and, for each $\alpha < \kappa$,

\begin{enumerate}[label=(\alph*)]
\item $x_\alpha \in M_\alpha$; 

\item $M_\alpha = M \cap G_\alpha = g(G_\alpha)$,

\item and $M_\alpha$ is a direct summand of $G_\alpha$.
\end{enumerate}
We shall proceed recursively on $\alpha$.

\noindent
For $\alpha = 0$, take $I_0$ the countable subset obtained in Lemma
\ref{l:SuccesorStep} for the countably generated submodule $Rx_0$ of
$M$, and set $M_0 = M \cap (\oplus_{i \in I_0}C_i)$ and
$G_0 = \oplus_{i \in I_0}C_i$. Notice that, as a consequence of Lemma
\ref{l:SuccesorStep}, $M_0$ is a direct summand of $G_0$.

\noindent
Let $\alpha$ be a nonzero ordinal such that we have made the
construction for each ordinal smaller than $\alpha$. If $\alpha$ is
limit, simply take $M_\alpha = \bigcup_{\gamma < \alpha}M_\gamma$,
$I_\alpha = \bigcup_{\gamma < \alpha}I_\gamma$ and
$G_\alpha = \oplus_{i \in I_\alpha}C_i$.

\noindent
Finally, assume that $\alpha$ is succesor, say $\alpha = \mu+1$. We
can construct, using the snake lemma, the following commutative
diagram with exact rows, in which $f_\mu$ is the inclusion, $g_\mu$ is
the restriction of $g$, and $p_\mu$ and $q_\mu$ are projections:
\[\begin{tikzcd}
  0 \arrow{r} & M_\mu \arrow[hook]{r}{f_\mu} \arrow[hook]{d}{i_\mu} &
  G_\mu \arrow{r}{g_\mu} \arrow[hook]{d}{j_\mu}& M_\mu
  \arrow{r} \arrow[hook]{d}{i_\mu} & 0\\
  0 \arrow{r} & M \arrow[hook]{r} \arrow{d}{p_\mu} & G
  \arrow{r}{g} \arrow{d}{q_\mu} & M \arrow{r} \arrow{d}{p_\mu}& 0\\
  0 \arrow{r} & \frac{M}{M_\mu} \arrow{r}{\hat f_{\mu}} &
  \frac{G}{G_\mu} \arrow{r}{\hat g_\mu} & \frac{M}{M_{\mu}} \arrow{r}
  & 0
\end{tikzcd}\]
Note that $f_\mu$ and $j_\mu$ are split, so that $i_\mu$ is split
too. Moreover, by Lemma \ref{l:LocallySplitEpi}, $\hat g_\mu$ is
locally split.

\noindent
Now, since $\hat f_\mu(x+M_\mu) = x+G_\mu$ for each $x \in M$,
$\Img \hat f_\mu = \frac{M+G_\mu}{G_\mu}$ and, consequently, we have a
short exact sequence
\[\begin{tikzcd}
  0 \arrow{r} & \frac{M+G_\mu}{G_\mu} \arrow[hook]{r} &
  \frac{G}{G_\mu} \arrow{r}{\hat f_\mu \hat g_\mu}&
  \frac{M+G_\mu}{G_\mu} \arrow{r}& 0
\end{tikzcd}\]
in which $\hat f_\mu \hat g_\mu$ is a locally split epimorphism and
$\frac{G}{G_\mu}$ is the direct sum of the family of countably
generated modules
$\left\{\frac{C_i+G_\mu}{G_\mu}\mid i \in I-I_\mu\right\}$. Then we
can apply Lemma \ref{l:SuccesorStep} to the countably generated
submodule $R(x_{\mu+1}+G_\mu)$ of $\frac{M+G_\mu}{G_\mu}$, to get a
countable subset $J_\mu$ of $I-I_\mu$ such that $x_{\mu+1}+G_\mu \in \bigoplus_{i \in
      J_\mu}\frac{C_i+G_\mu}{G_\mu}$, 
\begin{displaymath}
  \frac{M+G_\mu}{G_\mu} \bigcap \left(\bigoplus_{i \in
      J_\mu}\frac{C_i+G_\mu}{G_\mu}\right) = \hat f_\mu \hat g_\mu
  \left(\bigoplus_{i \in J_\mu}\frac{C_i+G_\mu}{G_\mu}\right)
\end{displaymath}
and the morphism
$\hat f_\mu \hat g_\mu \rest \left( \bigoplus_{i \in
    J_\mu}\frac{C_i+G_\mu}{G_\mu}\right)$
from $\bigoplus_{i \in J_\mu}\frac{C_i+G_\mu}{G_\mu}$ to
$\hat f_\mu \hat g_\mu\left(\bigoplus_{i \in
    J_\mu}\frac{C_i+G_\mu}{G_\mu}\right)$
is split exact. Set $I_{\mu+1} = I_{\mu} \cup J_{\mu}$,
$G_{\mu+1} = \oplus_{i \in I_{\mu+1}}C_i$ and
$M_{\mu+1}=g(G_{\mu+1})$. It is easy to see that
$g(G_{\mu+1}) = M \cap G_{\mu+1}$ and that $x_{\mu+1} \in \oplus_{i
  \in I_{\mu+1}}C_i$.

\noindent
Now we see that $\frac{M_{\mu+1}+G_\mu}{G_\mu}$ is a direct summand of
$\frac{G_{\mu+1}}{G_\mu}$ and $M_{\mu+1}$ is a direct summand of
$G_{\mu+1}$ to finish the proof. Applying again the snake lemma, we
get a commutative diagram with exact rows
\begin{displaymath}
  \begin{tikzcd}
    0 \arrow{r} & M_\mu \arrow[hook]{r}{f_\mu} \arrow[hook]{d} & G_\mu
    \arrow{r}{g_\mu} \arrow[hook]{d}& M_\mu
    \arrow{r} \arrow[hook]{d} & 0\\
    0 \arrow{r} & M_{\mu+1} \arrow[hook]{r}{f_{\mu+1}} \arrow{d} &
    G_{\mu+1}
    \arrow{r}{g_{\mu+1}} \arrow{d} & M_{\mu+1} \arrow{r} \arrow{d}& 0\\
    0 \arrow{r} & \frac{M_{\mu+1}}{M_\mu} \arrow{r}{\tilde f_{\mu}} &
    \frac{G_{\mu+1}}{G_\mu} \arrow{r}{\tilde g_\mu} &
    \frac{M_{\mu+1}}{M_{\mu}} \arrow{r} & 0
  \end{tikzcd}
\end{displaymath}
Since there exists a commutative diagram
\begin{displaymath}
  \begin{tikzcd}
    \bigoplus_{i \in J_\mu}\frac{C_i+G_\mu}{G_\mu} \arrow{r}{\hat
      f_\mu \hat g_\mu} \arrow{d}{\cong} & \hat f_\mu \hat g_\mu
    \left(\bigoplus_{i \in
        J_\mu}\frac{C_i+G_\mu}{G_\mu}\right) \arrow{d}{\cong}\\
    \frac{G_{\mu+1}}{G_\mu} \arrow{r}{\tilde g_\mu} & \frac{M_{\mu+1}}{M_\mu}\\
  \end{tikzcd}
\end{displaymath}
we conclude that $\tilde g_\mu$ is split. Then, by Lemma
\ref{l:lemma1}, $f_{\mu+1}$ is split too and $M_{\mu+1}$ is a direct
summand of $G_{\mu+1}$. Moreover, $\Img \tilde f_\mu$ is a direct
summand of $\frac{G_{\mu+1}}{G_\mu}$. But this image is precisely
$\frac{M_{\mu+1}+G_\mu}{G_\mu}$. This concludes the construction.
\end{proof}

We apply the previous result to get that some pure periodic modules
are trivial.

\begin{cor}\label{c:SplittingSequence}
  Let $\mathcal P$ be a class of finitely presented modules. Then each
  pure $\Add (\mathcal P)$-periodic module belongs to
  $\Add(\mathcal P)$. In particular we get the following:
  \begin{enumerate}
  \item \cite[Theorem 1.3]{Sim02} Each pure $\PProj$-periodic module
    is pure projective.
  \item \cite[Theorem 2.5]{BG} Each pure $\Proj$-periodic module is
    projective (equivalently, each flat $\Proj$-periodic module is projective).
  \end{enumerate}
\end{cor}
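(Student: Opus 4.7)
The plan is to reduce the corollary to an application of Theorem \ref{t:SplittingSequence}: if the hypotheses of that theorem can be verified for the pure exact sequence $0 \to M \to C \to M \to 0$, then the sequence splits, $M$ becomes a direct summand of $C \in \Add(\mathcal P)$, and hence $M \in \Add(\mathcal P)$ because the latter class is closed under direct summands. The first hypothesis, that $C$ is a direct sum of countably generated modules, is immediate from Kaplansky's theorem on direct summands of direct sums of countably generated modules: $C$ is a direct summand of a direct sum of the finitely presented (hence countably generated) modules in $\mathcal P$.

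The main work, and where I expect the effort to lie, is in showing that $g\colon C \to M$ is locally split. For this I would combine the pure-projective lifting property with Lemma \ref{l:PropertiesOfG}(4). Since $M$ is a pure submodule of $C$ and $C \in \Add(\mathcal P) \subseteq \lse(\mathcal P)$ (the identity on $C$ is trivially a locally split epimorphism), that lemma yields $M \in \lse(\mathcal P)$; choose a locally split epimorphism $h\colon A \to M$ with $A \in \Add(\mathcal P)$. Because $\mathcal P$ consists of finitely presented modules, $A$ is pure projective, so the map $h$ lifts through the pure epimorphism $g$ to a homomorphism $\bar h\colon A \to C$ with $g\bar h = h$. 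Given any finite subset $F \subseteq M$, local splitness of $h$ produces $s\colon M \to A$ with $hs(x) = x$ for every $x \in F$, and then $f := \bar h s\colon M \to C$ satisfies $gf(x) = hs(x) = x$ on $F$. By \cite[Corollary 2]{Azumaya} this suffices to conclude that $g$ is locally split.

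With both hypotheses verified, Theorem \ref{t:SplittingSequence} splits the sequence and gives $M \in \Add(\mathcal P)$. The two special cases (1) and (2) follow by specialising $\mathcal P$: taking $\mathcal P$ to be the class of all finitely presented modules gives $\Add(\mathcal P) = \PProj$ and Simson's theorem, while $\mathcal P = \{R\}$ gives $\Add(\mathcal P) = \Proj$ and Benson--Goodearl (the equivalence with the flat formulation coming from the observation that a $\Proj$-periodic sequence $0 \to M \to P \to M \to 0$ is pure precisely when $M$ is flat). The technical heart of the proof is thus the verification of local splitness in the second paragraph; the remainder is a routine assembly of ingredients already present in the paper.
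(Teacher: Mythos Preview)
Your proof is correct and follows essentially the same approach as the paper: both apply Lemma~\ref{l:PropertiesOfG}(4) to obtain $M\in\lse(\mathcal P)$, lift the resulting locally split epimorphism through $g$ using pure projectivity of $\Add(\mathcal P)$, conclude that $g$ is locally split, and then invoke Theorem~\ref{t:SplittingSequence}. The only cosmetic differences are that you verify local splitness of $g$ by hand where the paper cites Lemma~\ref{l:LocallySplitEpi}, you make the appeal to Kaplansky's theorem explicit, and you deduce (2) directly from $\mathcal P=\{R\}$ rather than via (1) as the paper does.
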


\begin{proof} Let $M$ be a pure $\Add(\mathcal P)$-periodic
  module. Then there exists a pure exact sequence
  \begin{displaymath}
    \begin{tikzcd}
      0 \arrow{r} & M \arrow{r} & P \arrow{r}{g} & M \arrow{r} & 0
    \end{tikzcd}
  \end{displaymath}
  with $P \in \Add (\mathcal P)$. By Lemma \ref{l:PropertiesOfG} (4),
  $M \in \lse (\mathcal P)$, so that there exists a locally split
  epimorphism $f\mathcolon Q \rightarrow M$ with $Q \in \Add(\mathcal P)$. Since
  $Q$ is pure projective by \cite[33.6]{Wis}, there exists
  $h\mathcolon Q \rightarrow P$ such that $gh=f$. Applying Lemma
  \ref{l:LocallySplitEpi} to the commutative diagram
  \begin{displaymath}
    \begin{tikzcd}
      Q \arrow[equal]{r} \arrow{d}{h} & Q \arrow{d}{f}\\
      P \arrow{r}{g} & M\\
    \end{tikzcd}
  \end{displaymath}
  we conclude that $g$ is locally split. By Theorem
  \ref{t:SplittingSequence} $g$ is actually split and
  $M \in \Add (\mathcal P)$.

\noindent
Now, to get (1) we simply note that $\PProj$ is equal to
$\Add(\mathcal P)$ for the class $\mathcal P$ of all finitely
presented modules. Finally to get (2), if $M$ is a pure
$\Proj$-periodic module then it is pure projective by (1). Since it is
flat too, we conclude that $M$ is projective.
\end{proof}

It is easy to see that the class of projective modules is not closed
under periodic modules. Actually, there exist $\Proj$-periodic modules
that are not pure projective.

\begin{example}\label{e:PeriodicNonProjective}
  Let $R$ be a QF ring which is not right pure semisimple. Then we can
  apply Corollary \ref{p:ConstructionPeriodic} with $\mathcal C$ the
  class $\Proj$ and $\mathcal D$ the class $\PProj$ to construct a
  $\Proj$-periodic module $M$ which is not pure projective. In
  particular, $M$ is not trivial. Since $\Flat=\Proj$ in this case,
  this also gives us an example of a $\Flat$-periodic module which is
  not trivial.
\end{example}

Another consequence of Corollary \ref{c:SplittingSequence} is that a
flat and strongly Gorenstein projective module is projective.

\begin{cor}
  Any flat and strongly Gorenstein projective module is projective.
\end{cor}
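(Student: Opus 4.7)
The plan is to observe that the statement is essentially a direct corollary of Corollary~\ref{c:SplittingSequence}(2) once we unpack the definition of strongly Gorenstein projective. Recall that by definition (from the preliminaries), a strongly Gorenstein projective module $M$ is, in particular, $\Proj$-periodic: there exists a short exact sequence
\begin{equation*}
  0 \to M \to P \to M \to 0
\end{equation*}
with $P$ projective.

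First I would use flatness to upgrade this sequence to a pure exact sequence. Indeed, any short exact sequence whose cokernel is flat is automatically pure, so if $M$ is flat then the above sequence exhibits $M$ as a pure $\Proj$-periodic module.

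Then I would simply invoke Corollary~\ref{c:SplittingSequence}(2), which asserts that every pure $\Proj$-periodic module (equivalently, every flat $\Proj$-periodic module) is projective. This gives the conclusion immediately.

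There is no real obstacle: the work has already been done in Theorem~\ref{t:SplittingSequence} and Corollary~\ref{c:SplittingSequence}. The only thing to verify is that ``strongly Gorenstein projective'' indeed entails $\Proj$-periodicity, which is built into the definition recalled earlier, and that the periodic sequence is pure when $M$ is flat, which is a standard property of flat modules.
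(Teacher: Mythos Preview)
Your proposal is correct and matches the paper's approach: the paper states this corollary without an explicit proof, introducing it simply as ``another consequence of Corollary~\ref{c:SplittingSequence}'', and your unpacking is exactly the intended argument. Note that Corollary~\ref{c:SplittingSequence}(2) already records the equivalence between ``pure $\Proj$-periodic'' and ``flat $\Proj$-periodic'', so you could even skip the purity step and apply the parenthetical formulation directly.
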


A natural question arises:
\begin{ques}\label{Q:1} Is every flat Gorenstein projective module
  projective?
\end{ques}

Using Corollary \ref{c:SplittingSequence} we can give a partial answer
to this question.

\begin{prop} \label{p:FlatGorensteinProj} Suppose that
  $\FPD_{\Flat}(R) < \infty$. If a cycle in an acyclic complex of projective modules is flat, then all cycles are projective modules. 
  In particular, every flat Gorenstein projective
  module is projective.
\end{prop}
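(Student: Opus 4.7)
The strategy is to reduce, via Corollary \ref{c:SplittingSequence}(2), to showing that every cycle of the given acyclic complex $P$ is flat. Once that is established, the module $M=\bigoplus_{n\in\bbZ}Z_nP$ is flat, and the componentwise differentials (after the reindexing $Z_{n-1}P\cong Z_nP$) yield an exact sequence
\[
  0\to M\to\bigoplus_{n\in\bbZ}P_n\to M\to 0,
\]
exhibiting $M$ as flat and $\Proj$-periodic. Corollary \ref{c:SplittingSequence}(2) then forces $M$, and hence each of its direct summands $Z_nP$, to be projective. So the whole content of the argument is to prove that if one cycle of $P$ is flat, then every cycle of $P$ is flat.

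Assume without loss of generality that $Z_0P$ is flat. For $n\geq 0$, the short exact sequence $0\to Z_{n+1}P\to P_{n+1}\to Z_nP\to 0$ is pure because its right-hand term is flat, so $Z_{n+1}P$ is a pure submodule of a flat module and is itself flat; induction then gives $Z_nP\in\Flat$ for every $n\geq 0$. For $n<0$, the Tor long exact sequence applied to the same short exact sequences yields the Tor-shift
\[
  \Tor_i^R(Z_{n+1}P,-)\cong\Tor_{i+1}^R(Z_nP,-)\qquad (i\geq 1),
\]
from which induction gives $\operatorname{fd}(Z_{-k}P)\leq k$ for every $k\geq 0$. Hence every cycle of $P$ has finite flat dimension, and the hypothesis $\FPD_{\Flat}(R)=n_0<\infty$ provides the uniform bound $\operatorname{fd}(Z_nP)\leq n_0$ for all $n\in\bbZ$.

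The decisive step, and the only real obstacle, is to upgrade this uniform bound into flatness. The same Tor-shift gives $\operatorname{fd}(Z_{n+1}P)\leq\max(0,\operatorname{fd}(Z_nP)-1)$ for every $n$; since the family $\{Z_nP\}_{n\in\bbZ}$ is invariant under the shift $n\mapsto n+1$, a uniform bound $\operatorname{fd}(Z_nP)\leq D$ upgrades automatically to $\operatorname{fd}(Z_nP)\leq\max(0,D-1)$ for every $n$. Iterating $n_0$ times brings the bound down to $0$, so every cycle of $P$ is flat, and the periodic-module argument of the first paragraph then completes the proof of the first assertion. The Gorenstein projective statement is immediate: by definition a Gorenstein projective module is a cycle of a totally acyclic (in particular, acyclic) complex of projectives, so a flat Gorenstein projective module is a flat cycle of an acyclic complex of projectives and is therefore projective.
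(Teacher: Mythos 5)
Your argument is correct and matches the paper's approach: both reduce to showing every cycle is flat (forward cycles by purity/flatness, backward cycles by the $\FPD_\Flat(R)$ bound combined with the syzygy shift), then apply Corollary~\ref{c:SplittingSequence}(2) to the flat $\Proj$-periodic module $\bigoplus_{n\in\bbZ}Z_nP$. The only cosmetic difference is that you lower the uniform bound one syzygy at a time while the paper jumps directly $d+1$ steps from $Z_{n-d-1}(P)$; the underlying mechanism is identical.
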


\begin{proof}
Let $P$ be an acyclic complex of projectives and suppose that $M=
  Z_0(P)$ is flat. By \cite[36.6]{Wis}, $Z_n(P)$ is flat for each $n >
  0$.
  Now let $d=\FPD_{\Flat}(R)$ and $n < 0$. Since $Z_{n-d-1}(P)$ has
  finite flat dimension, it has flat dimension less than or equal to
  $d$. As $Z_n(P)$ is a syzygy of $M$, it has to be flat. The conclusion
  is that $Z_n(P)$ is flat for each $n \in \mathbb Z$.
Now each $Z_n(P)$ is a direct summand of
$\bigoplus_{n \in \mathbb Z}Z_n(P)$, which is a flat $\Proj$-periodic
module. By Corollary \ref{c:SplittingSequence}(2), the module
$\bigoplus_{n \in \mathbb Z}Z_n(P)$ is projective, and so is $Z_n(P)$, $n\in \mathbb{Z}$.
\end{proof}

\section{Periodic modules with respect to hereditary cotorsion pairs}
% \section{Preliminaries}

In this section we study the dual notion of flat $\Proj$-periodic
modules: $\Inj$-periodic absolutely pure modules. In order to do this,
we consider periodic modules with respect to the right class of a
hereditary cotorsion pair. Let us start with the following two lemmas,
which are useful for computing the $\Ext$ functors with periodic
modules.

\begin{lem}\label{L:Ext-1} Let $(\A, \B)$ be a hereditary 
  cotorsion pair in $\Modr R$ and let $M$ be a $\B$-periodic
  module. Then, for every module $L\in \A$ and non zero natrual nuber
  $n$, $\Ext^n_R(L, M)\cong\Ext^1_R(L, M)$.
\end{lem}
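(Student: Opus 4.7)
The plan is to apply the long exact sequence of $\Ext$ to the defining short exact sequence of the $\B$-periodic module $M$ and to use the hereditary hypothesis to kill all the middle terms.

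More precisely, since $M$ is $\B$-periodic there is a short exact sequence
\begin{equation*}
0\to M\to B\to M\to 0
\end{equation*}
with $B\in\B$. For any $L\in\A$ I would apply the contravariant functor $\Hom_R(L,-)$ to obtain the long exact sequence
\begin{equation*}
\cdots\to \Ext^{n}_R(L,M)\to \Ext^n_R(L,B)\to \Ext^n_R(L,M)\xrightarrow{\delta_n} \Ext^{n+1}_R(L,M)\to \Ext^{n+1}_R(L,B)\to\cdots
\end{equation*}
Now comes the key observation: because the cotorsion pair $(\A,\B)$ is hereditary and $L\in\A$, $B\in\B$, we have $\Ext^i_R(L,B)=0$ for every $i\geq 1$. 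Therefore the connecting homomorphism $\delta_n\colon\Ext^n_R(L,M)\to\Ext^{n+1}_R(L,M)$ is an isomorphism for every $n\geq 1$.

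An immediate induction on $n$ then yields $\Ext^n_R(L,M)\cong\Ext^1_R(L,M)$ for all $n\geq 1$, which is the claim. There is no genuine obstacle here — the only thing worth being careful about is that the hereditary property gives vanishing of $\Ext^i$ for \emph{all} $i\geq 1$ (not merely $i=1$), which is precisely what allows the long exact sequence to collapse into the desired chain of isomorphisms.
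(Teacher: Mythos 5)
Your proof is correct and follows exactly the same route as the paper: apply the long exact sequence of $\Ext_R(L,-)$ to the defining sequence $0\to M\to B\to M\to 0$, use the hereditary hypothesis to kill $\Ext^i_R(L,B)$ for all $i\geq 1$, and conclude that the connecting maps $\Ext^n_R(L,M)\to\Ext^{n+1}_R(L,M)$ are isomorphisms. One small slip: $\Hom_R(L,-)$ is a \emph{covariant} functor, not contravariant, though this does not affect the argument.
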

\begin{proof} Let $L\in \A$. Since $M$ is $\B$-periodic, there exists
  an exact sequence $0\to M\to B\to M\to 0$ with $B\in \B$. The usual
  long exact sequence of cohomology attained to this short exact
  sequence, gives us an exact sequence
$$\cdots \to \textrm{Ext}^n_R(L,B)\to \textrm{Ext}^n_R(L,M) \to \textrm{Ext}^{n+1}_R(L,M)\to \textrm{Ext}^{n+1}_R(L,B)\to\cdots. $$
Since $(\A,\B)$ is hereditary we have that $\textrm{Ext}^i_R(L,B)=0$,
for every $i\geq 1$. Therefore, it follows that
$\textrm{Ext}^n_R(L,M) \cong \textrm{Ext}^{n+1}_R(L,M)$, for every
$n\geq 1$. So we get our claim.
\end{proof}
Symmetrically we have:
\begin{lem}\label{L:Ext-1-A} Let $(\A, \B)$ be a hereditary
  cotorsion pair in $\Modr R$ and let $M$ be an $\A$-periodic
  module. Then, for every module $T\in \B$ and non zero natural number
  $n$, $\Ext^n_R(M, T)\cong\Ext^1_R(M, T)$.
\end{lem}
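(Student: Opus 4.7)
The plan is to mirror the proof of Lemma \ref{L:Ext-1} on the contravariant side. Since $M$ is $\A$-periodic, I start by picking a short exact sequence $0\to M\to A\to M\to 0$ with $A\in\A$ witnessing the periodicity.

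Next, I apply the contravariant functor $\Hom_R(-,T)$ to this sequence. Because $(\A,\B)$ is a cotorsion pair with $T\in\B$, this yields a long exact cohomology sequence of the form
\[
\cdots\to\Ext^n_R(M,T)\to\Ext^n_R(A,T)\to\Ext^n_R(M,T)\to\Ext^{n+1}_R(M,T)\to\Ext^{n+1}_R(A,T)\to\cdots
\]
for each $n\geq 1$.

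The crucial input is that the cotorsion pair is \emph{hereditary}: this gives $\Ext^i_R(A,T)=0$ for all $i\geq 1$, since $A\in\A$ and $T\in\B$. Substituting these vanishings into the long exact sequence collapses the connecting maps into isomorphisms $\Ext^n_R(M,T)\cong\Ext^{n+1}_R(M,T)$ for every $n\geq 1$. Chaining these isomorphisms starting from $n=1$ yields $\Ext^n_R(M,T)\cong\Ext^1_R(M,T)$ as required.

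There is no real obstacle here; the argument is entirely parallel to Lemma \ref{L:Ext-1}, with the only substantive ingredient being the hereditary property of $(\A,\B)$ applied in the direction $A\in\A$, $T\in\B$ rather than the reverse.
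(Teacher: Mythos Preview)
Your proof is correct and is precisely the symmetric argument the paper intends: the paper states Lemma~\ref{L:Ext-1-A} without proof, prefacing it only with ``Symmetrically we have,'' so the expected argument is exactly the contravariant version of the proof of Lemma~\ref{L:Ext-1} that you have written out.
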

%
% \begin{proof} Let $X\in \A$.  If $\Ext^1_R(X, M)=0$ then by
%   dimension shifting $\Ext^i_R(X, M)=0$ for every $i\geq 1$, since
%   $\Ext^j(X, B)=0$ for every $j\geq 1$.  Assume that the minimum
%   natural number such that $\Ext^n_R(X, M)=0$ is greater than
%   $1$. By dimension shifting
%   $\Ext^{n-1}_R(X, M)\cong \Ext^n_R(X, M)$, a contradiction.
% \end{proof}

We shall use the following relative version of the 2-out-of-3 property
for a class of modules.

\begin{defn}\label{D:2-3} Let $\C$, $\D$ be two classes of modules. We
  say that $\D$ has the $2$-out-of-$3$ property with respect to $\C$
  if the following holds: for every exact sequence
  $0\to C_1\to C_2\to C_3\to 0$ in $\C$, if two of the $C_i$'s are in
  $\D$ then the the third term is in $\D$ too.\end{defn}

Now we prove that the left orthogonal of a periodic module with
respect to the right class of a hereditary cotorsion pair has the
relative 2-out-of-3 property.

\begin{lem}\label{L:2-out-3} Let $(\A, \B)$ be a hereditary  cotorsion pair in $\Modr R$ and let $M$ be a $\B$-periodic module.
  Then $^\perp M$ has the $2$-out-of-$3$ property with respect to
  $\A$.
\end{lem}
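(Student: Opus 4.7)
The plan is to exploit the periodic structure that Lemma \ref{L:Ext-1} imposes on the groups $\Ext^n_R(A, M)$ for $A \in \A$, and then to read off the 2-out-of-3 property directly from the long exact sequence in the first variable.

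First, I would fix an exact sequence $0 \to A_1 \to A_2 \to A_3 \to 0$ with all $A_i \in \A$ and write down the corresponding long exact $\Ext$ sequence
\[
\cdots \to \Ext^n_R(A_3,M) \to \Ext^n_R(A_2,M) \to \Ext^n_R(A_1,M) \xrightarrow{\partial_n} \Ext^{n+1}_R(A_3,M) \to \cdots
\]
By Lemma \ref{L:Ext-1}, for each $A_i \in \A$ and each $n \geq 1$ the connecting map $\delta^n_i \mathcolon \Ext^n_R(A_i,M) \to \Ext^{n+1}_R(A_i,M)$ associated to the defining sequence $0 \to M \to B \to M \to 0$ is an isomorphism. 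The key observation is that $\delta^n$ is a natural transformation in the first variable, so the maps $\delta^n_i$ commute with the horizontal maps of the long exact sequence above. Hence if we put $E_i := \Ext^1_R(A_i,M)$ and identify $\Ext^{n+1}_R(A_i,M)$ with $E_i$ via the iterated isomorphism $\delta^n_i \circ \cdots \circ \delta^1_i$, the long exact sequence collapses to an infinite exact sequence of period three,
\[
\cdots \to E_3 \to E_2 \to E_1 \to E_3 \to E_2 \to E_1 \to \cdots,
\]
in which each of the three consecutive patterns appears infinitely often and each map is exact at its middle term.

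Now the conclusion is a one-line diagram chase: assume two of $E_1,E_2,E_3$ vanish. Pick three consecutive terms of the periodic sequence so that the two zero groups flank the third; exactness at the middle position then forces that third group to be zero as well. This covers all three cases of 2-out-of-3, since every ordered triple $(E_i,E_j,E_k)$ with $\{i,j,k\}=\{1,2,3\}$ occurs as a consecutive segment.

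The only point requiring genuine care is the compatibility claim, namely that the connecting maps coming from $0 \to M \to B \to M \to 0$ (acting on the second variable) and the connecting maps of $0 \to A_1 \to A_2 \to A_3 \to 0$ (acting on the first variable) commute; but this is the standard naturality of the $\delta$-functor $\Ext^\bullet_R(-,M)$ applied to the morphism of short exact sequences $0 \to M \to B \to M \to 0$, so once this is pointed out the rest of the argument is purely formal.
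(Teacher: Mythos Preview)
Your proof is correct and follows essentially the same line as the paper: both arguments read off the three cases from suitable three-term segments of the long $\Ext$ sequence attached to $0\to A_1\to A_2\to A_3\to 0$, using Lemma~\ref{L:Ext-1} to control the terms appearing in degree~$2$. One remark: the compatibility of the two connecting maps you worry about, while true (up to sign), is not actually needed---since you only use \emph{which} of the groups vanish, the bare isomorphisms $\Ext^n_R(A_i,M)\cong\Ext^1_R(A_i,M)$ from Lemma~\ref{L:Ext-1} already suffice, and this is precisely how the paper argues.
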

\begin{proof} Let $0\to X\to Y\to Z\to 0$ be an exact sequence in
  $\mathcal A$. We have the long exact sequence of cohomology
  \begin{align*}
    0 & \to \textrm{Hom}_R(Z,M)\to \textrm{Hom}_R(Y,M)\to \textrm{Hom}_R(X,M)\to \\
      &\to \textrm{Ext}^1_R(Z,M)\to \textrm{Ext}^1_R(Y,M)\to \textrm{Ext}^1_R(X,M)\to \\
      & \to \textrm{Ext}^2_R(Z,M)\to \textrm{Ext}^2_R(Y,M)\to \textrm{Ext}^2_R(X,M)\to\cdots\:.
  \end{align*}
  Now, if $X,Z\in ^\perp\!\! M$ the exact sequence
  $$\textrm{Ext}^1_R(Z,M)\to \textrm{Ext}^1_R(Y,M)\to
  \textrm{Ext}^1_R(X,M) $$
  gives us that $Y\in ^\perp\!\! M$.  If $X,Y\in ^\perp\!\! M$ the
  sequence
  $$\textrm{Ext}^1_R(X,M)\to
  \textrm{Ext}^2_R(Z,M)\to\textrm{Ext}^2_R(Y,M)$$
  together with Lemma \ref{L:Ext-1}, gives that $Z\in ^\perp\!\! M$.
  Finally, if $Y,Z\in ^\perp\!\! M$, the sequence
  $$\textrm{Ext}^1_R(Y,M)\to
  \textrm{Ext}^1_R(X,M)\to\textrm{Ext}^2_R(Z,M)$$
  together with Lemma \ref{L:Ext-1}, gives that $X\in ^\perp\!\! M$.
\end{proof}

We have also the symmetric statement.
\begin{lem}\label{L:2-out-3-A} Let $(\A, \B)$ be a hereditary
  cotorsion pair in $\Modr R$ and let $M$ be an $\A$-periodic module.
  Then $M^\perp $ has the $2$-out-of-$3$ property with respect to
  $\B$.
\end{lem}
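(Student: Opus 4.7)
The plan is to mirror the proof of Lemma \ref{L:2-out-3}, replacing the contravariant functor $\Hom_R(-, M)$ with the covariant $\Hom_R(M, -)$, and invoking Lemma \ref{L:Ext-1-A} in place of Lemma \ref{L:Ext-1}. Starting from a short exact sequence $0 \to X \to Y \to Z \to 0$ with all three terms in $\B$, I would apply $\Hom_R(M, -)$ to obtain the long exact sequence of cohomology
\[
\cdots \to \Ext^{n-1}_R(M, Z) \to \Ext^n_R(M, X) \to \Ext^n_R(M, Y) \to \Ext^n_R(M, Z) \to \Ext^{n+1}_R(M, X) \to \cdots
\]
and then extract the vanishing of $\Ext^1_R(M, -)$ on the missing term from its vanishing on the two given ones, using Lemma \ref{L:Ext-1-A} (which applies because all $X, Y, Z$ lie in $\B$ and $M$ is $\A$-periodic) to collapse any higher Ext groups that appear.

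The first two cases are routine. If $X, Z \in M^\perp$, the piece $\Ext^1_R(M, X) \to \Ext^1_R(M, Y) \to \Ext^1_R(M, Z)$ squeezes $\Ext^1_R(M, Y)$ between two zeros. If $X, Y \in M^\perp$, then in the piece $\Ext^1_R(M, Y) \to \Ext^1_R(M, Z) \to \Ext^2_R(M, X)$ the right-hand term is zero by Lemma \ref{L:Ext-1-A} applied to $X \in \B$, giving $Z \in M^\perp$.

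The only case requiring a slight extra step is when $Y, Z \in M^\perp$, since the naive piece $\Hom_R(M, Z) \to \Ext^1_R(M, X) \to \Ext^1_R(M, Y)$ has a potentially nonzero left term. I would instead shift one degree up and consider $\Ext^1_R(M, Z) \to \Ext^2_R(M, X) \to \Ext^2_R(M, Y)$: here $\Ext^1_R(M, Z) = 0$ by hypothesis and $\Ext^2_R(M, Y) \cong \Ext^1_R(M, Y) = 0$ by Lemma \ref{L:Ext-1-A}, so $\Ext^2_R(M, X) = 0$. Applying Lemma \ref{L:Ext-1-A} once more (now to $X \in \B$) gives $\Ext^1_R(M, X) \cong \Ext^2_R(M, X) = 0$, hence $X \in M^\perp$. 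Since this exhausts the three possible 2-out-of-3 configurations, the statement follows. No genuine obstacle is expected; the only subtlety is recognizing that in the third case the index must be shifted up rather than down in order to exploit Lemma \ref{L:Ext-1-A}.
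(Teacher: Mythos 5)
Your proof is correct and is exactly the direct dualization the paper intends by saying "We have also the symmetric statement": you replace $\Hom_R(-,M)$ with $\Hom_R(M,-)$ and Lemma \ref{L:Ext-1} with Lemma \ref{L:Ext-1-A}, and your observation that the third case needs the index shift (and two invocations of the dimension-shifting lemma rather than one) is precisely the mirror image of the second case in the paper's proof of Lemma \ref{L:2-out-3}.
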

We will prove now that under the hypothesis of Lemma \ref{L:2-out-3}
and assuming that $\mathcal{A}$ is also closed under pure epimorphic
images, the class $^\perp\! M\cap \A$ is closed under direct
limits. We start by showing that $^\perp\! M\cap \A$ is closed under
well ordered direct unions of pure submodules.
\begin{lem}\label{L:direct-unions}

  Let $(\A, \B)$ be a hereditary cotorsion pair in $\Modr R$ and let
  $M$ be a $\B$-periodic module. Assume that $\A$ is closed under pure
  epimorphic images. Let $K$ be the direct union of a chain
  $\{K_{\alpha}\mid \alpha< \lambda\}$ of pure submodules with
  $K_{\alpha}\in\!\! ^\perp M\cap \A $, for each
  $\alpha<\lambda$. Then $K\in \!\!^\perp M\cap \A $.
\end{lem}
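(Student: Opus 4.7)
The plan is to reduce to a continuous (smooth) chain and then verify the hypotheses of Eklof's lemma for the two classes $\A$ and ${}^\perp M$.

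First I would replace the given chain by a continuous one. Setting $K'_{\alpha}=\bigcup_{\beta<\alpha}K_{\beta}$ at each limit ordinal $\alpha$ and $K'_{\alpha}=K_{\alpha}$ at successor stages gives a continuous chain with the same union. Since a directed union of pure submodules of $K$ is again pure in $K$, each $K'_{\alpha}$ remains a pure submodule of $K$, and all the inclusions $K'_{\alpha}\subseteq K'_{\alpha+1}$ are pure monomorphisms. So we may assume from the start that the chain itself is continuous and that $K_{\alpha}$ is pure in $K_{\alpha+1}$ for every $\alpha<\lambda$.

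Next I would establish the quotients. For each $\alpha<\lambda$ we have the pure exact sequence
\[
0\to K_{\alpha}\to K_{\alpha+1}\to K_{\alpha+1}/K_{\alpha}\to 0.
\]
Since $K_{\alpha+1}\in\A$ and $\A$ is closed under pure epimorphic images by hypothesis, $K_{\alpha+1}/K_{\alpha}\in\A$. Therefore all three terms of this short exact sequence belong to $\A$, with $K_{\alpha},K_{\alpha+1}\in{}^\perp M$. Applying Lemma \ref{L:2-out-3} (the $2$-out-of-$3$ property of ${}^\perp M$ with respect to $\A$), we conclude $K_{\alpha+1}/K_{\alpha}\in{}^\perp M\cap\A$.

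Finally I would invoke Eklof's lemma, which asserts that any class of the form ${}^\perp N$ is closed under transfinite extensions along continuous chains. Since $K_{0}\in{}^\perp M$ and each quotient $K_{\alpha+1}/K_{\alpha}\in{}^\perp M$, the continuous union $K=\bigcup_{\alpha<\lambda}K_{\alpha}$ lies in ${}^\perp M$. Applying the same principle to $\A={}^\perp\B$, with $K_{0}\in\A$ and $K_{\alpha+1}/K_{\alpha}\in\A$, yields $K\in\A$. Hence $K\in{}^\perp M\cap\A$, as required. The main obstacle is really just the second step: securing simultaneously that each factor $K_{\alpha+1}/K_{\alpha}$ belongs to $\A$ (where the new hypothesis on $\A$ is used in an essential way) and that it lies in ${}^\perp M$ (where Lemma \ref{L:2-out-3}, and hence the heredity of the cotorsion pair, is essential); once both hold, Eklof's lemma finishes the argument mechanically.
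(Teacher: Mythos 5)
Your overall plan is the same as the paper's: pass to a continuous chain, use closure of $\A$ under pure epimorphic images to get the consecutive quotients in $\A$, invoke Lemma~\ref{L:2-out-3} to get them in ${}^\perp M$, and finish with Eklof's lemma. There is, however, a real gap in the way you structure the argument.

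After you replace $\{K_\alpha\}$ by the continuous chain $\{K'_\alpha\}$, the terms at limit ordinals are $K'_\alpha=\bigcup_{\beta<\alpha}K_\beta$, and these are \emph{not} known a priori to lie in ${}^\perp M$; membership of such a directed union in ${}^\perp M$ is exactly what the lemma is trying to establish. Yet your second step applies Lemma~\ref{L:2-out-3} to
\[
0\to K_\alpha\to K_{\alpha+1}\to K_{\alpha+1}/K_\alpha\to 0
\]
asserting ``$K_\alpha, K_{\alpha+1}\in{}^\perp M$'' for every $\alpha<\lambda$, which is unjustified when $\alpha$ is a limit. Consequently you cannot conclude that all consecutive quotients lie in ${}^\perp M$ before invoking Eklof's lemma: the hypothesis of Eklof's lemma at a limit stage $\alpha$ depends on already knowing $K_\alpha\in{}^\perp M$, which in turn requires having applied (Eklof's lemma or an equivalent argument to) the chain below $\alpha$. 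The argument as written is circular.

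The fix is precisely what the paper does: run a transfinite induction on $\alpha\le\lambda$ proving $K_\alpha\in{}^\perp M$, where the successor case is immediate from the hypothesis, and the limit case first uses the induction hypothesis together with the $2$-out-of-$3$ property to put the quotients $K_{\gamma+1}/K_\gamma$ (for $\gamma<\alpha$) into ${}^\perp M$, and \emph{then} applies Eklof's lemma to the chain below $\alpha$. (The $\A$-side is unproblematic: $\A$ is closed under direct limits since it is closed under direct sums and pure epimorphic images, so $K_\alpha\in\A$ at every stage.) Once that induction is made explicit, your argument becomes a correct rendering of the paper's proof.
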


\begin{proof}
  Consider the \emph{continuous} chain
  $\{L_{\alpha}\mid \alpha<\lambda\}$ of submodules of $K$ given by:
  \begin{itemize}
  \item $L_{\alpha}:=K_{\alpha}$ if $\alpha$ is succesor.
  \item $L_{\alpha}:=\bigcup_{\gamma<\alpha}L_{\gamma}$ if $\alpha$ is
    a limit ordinal.
  \end{itemize}
  Since $\mathcal A$ is closed under direct sums and pure epimorphic
  images, $\A$ is closed under direct limits by
  \cite[33.9]{Wis}. Therefore, $L_{\alpha}\in \mathcal A$, for each
  $\alpha<\lambda$. It is also clear that $K$ is the direct union of
  the continuous chain $\{L_{\alpha}\mid \alpha<\lambda\}$. Since the
  class of pure submodules of a given module is closed under direct
  unions \cite[33.8]{Wis}, we follow that $L_{\alpha}$ is a pure
  submodule of $K$, for each $\alpha<\lambda$.  Let us call
  $L_{\lambda}=K$. We prove now by induction that
  $L_{\alpha}\in \!\!^\perp M$, for every $\alpha\leq\lambda$. If
  $\alpha$ is succesor, $K_{\alpha}=L_\alpha$ which belongs to
  $\!\!^\perp M$ by hypothesis. Assume that $\alpha$ is a limit
  ordinal. For each $\gamma<\alpha$ we have the exact sequence
  \begin{equation}\label{eq:1}
    0\to L_{\gamma}\to L_{\gamma+1}\to \frac{L_{\gamma+1}}{L_{\gamma}}\to 0
  \end{equation}
  in which $L_{\gamma}$ is a pure submodule of $K$ so that, by
  \cite[33.3]{Wis}, it is a pure submodule of $L_{\gamma+1}$. Since
  $\mathcal A$ is closed under pure epimorphic images, we infer that
  $\frac{L_{\gamma+1}}{L_{\gamma}}\in \mathcal A$. By our induction
  hypothesis $L_{\gamma}$ and $L_{\gamma+1}$ belong to $ ^\perp M $.
  Thus, the exact sequence (\ref{eq:1}) has all its terms in
  $\mathcal A$ and therefore by Lemma \ref{L:2-out-3}, the quotient
  module $\frac{L_{\gamma+1}}{L_{\gamma}}$ belongs to $ ^\perp M$.
  Finally by Eklof Lemma (\cite[Lemma 6.2]{GT12}), we conclude that
  $L_{\alpha}$ lies in $ ^\perp M$.
\end{proof}

\begin{thm}\label{T:direct-limits}  Let $(\A, \B)$ be a hereditary  cotorsion pair in $\Modr R$ and let $M$ be a $\B$-periodic module. Assume that $\A$ is closed under pure epimorphic images.
  If $\{X_i; f_{ji}\mid i\leq j \in I\}$ is a direct system of modules
  in $^\perp M\cap \A $, then $\varinjlim\limits_{i\in I}X_i$ is in
  $^\perp M.$
\end{thm}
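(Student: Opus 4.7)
My plan is to do transfinite induction on $\kappa=|I|$, using the canonical pure-epimorphic presentations of directed colimits and combining them with Lemma \ref{L:2-out-3} (for the base case) and Lemma \ref{L:direct-unions} (for the induction step). Set $L:=\varinjlim_{i\in I}X_{i}$. First I would note that $\A$ is closed under direct sums and pure epimorphic images, hence under direct limits by \cite[33.9]{Wis}; so $L\in\A$ for free, and the task reduces to showing $\Ext^{1}_{R}(L,M)=0$.

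For the countable base case, I would pass to a cofinal chain and use the telescope short exact sequence
\[
0\to \bigoplus_{n<\omega}X_{n}\xrightarrow{\,1-\sigma\,}\bigoplus_{n<\omega}X_{n}\to L\to 0,
\]
where $\sigma$ acts by the transition maps. Tensoring on the left with any $R$-module commutes with both $\bigoplus$ and $\varinjlim$, so this sequence is pure exact. All three terms then lie in $\A$, while the two outer terms belong to ${}^{\perp}M$ because $\Ext^{1}_{R}$ converts a direct sum in the first argument into a product. Lemma \ref{L:2-out-3} then forces $L\in{}^{\perp}M$.

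For the inductive step $\kappa>\aleph_{0}$, I would write $I$ as a continuous union $\bigcup_{\alpha<\kappa}I_{\alpha}$ of directed subsets $I_{\alpha}$ with $|I_{\alpha}|<\kappa$, arranged so that every canonical map $L_{\alpha}:=\varinjlim_{i\in I_{\alpha}}X_{i}\to L$ is a pure monomorphism. Such a filtration is obtained by a downward L\"{o}wenheim--Skolem style closure inside the filtered index category: starting from any small $I'\subseteq I$ one iteratively adjoins witnesses for directedness of $I'$ and for the vanishing in $L$ of the elements coming from $X_{i}$ with $i\in I'$, then closes under countable joins. By the induction hypothesis each $L_{\alpha}\in\A\cap{}^{\perp}M$, so $L=\bigcup_{\alpha<\kappa}L_{\alpha}$ is a continuous directed union of pure submodules in $\A\cap{}^{\perp}M$, and Lemma \ref{L:direct-unions} then yields $L\in{}^{\perp}M$.

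The main obstacle will be the inductive step, specifically verifying that one can continuously filter $L$ by pure subcolimits indexed by strictly smaller directed subsets; the countable base reduces to a one-line verification once the telescope purity observation is in hand.
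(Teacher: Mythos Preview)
Your base case is correct (modulo a slip: it is the first two terms of the telescope sequence, not the two \emph{outer} terms, that lie in ${}^{\perp}M$). The inductive step, however, has a genuine gap. The L\"owenheim--Skolem closure you sketch need not preserve the bound $|I'|<\kappa$, because the witnesses for ``vanishing in $L$ of elements coming from $X_i$'' are indexed by \emph{elements of the modules $X_i$}, not by $I$; if some $X_i$ has cardinality $\geq\kappa$, the closure can be forced to be cofinal in $I$. For a concrete obstruction take $R$ a field, $I=\omega_1$, $X_\alpha=R^{(\omega_1\setminus\alpha)}$ with transition maps the canonical projections, so that $L=0$. For any countable $I'\subseteq\omega_1$ one has $L_{I'}\cong R^{(\omega_1\setminus\sup I')}\neq 0$, so $L_{I'}\to L$ is not even injective, let alone pure. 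Your closure applied to $I'=\{0\}$ must adjoin, for every basis vector $e_\beta\in X_0=R^{(\omega_1)}$, some index $j>\beta$, and therefore returns all of $\omega_1$. Thus there is in general no continuous filtration of $L$ by pure subcolimits over strictly smaller index sets, and Lemma~\ref{L:direct-unions} cannot be applied to $L$ in the way you propose.

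The paper avoids this obstacle by never attempting to embed subcolimits purely into $L$. After reducing to a well-ordered system (via \cite[Corollary~1.7]{AR}) it works instead with the canonical presentation $0\to K\to\bigoplus_{\alpha<\lambda}X_\alpha\to L\to 0$ and filters $K$: by \cite[Lemma~2.1]{GPGA} the kernel $K$ is a directed union of submodules $K_\alpha$ each of which is a \emph{direct summand} of $\bigoplus_{\alpha}X_\alpha$ (and hence of $K$). These $K_\alpha$ lie in ${}^{\perp}M\cap\A$ for free and are split, so pure, in $K$, whence Lemma~\ref{L:direct-unions} gives $K\in{}^{\perp}M$, and then Lemma~\ref{L:2-out-3} yields $L\in{}^{\perp}M$. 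The moral is that the filtration one needs lives inside $\bigoplus_\alpha X_\alpha$, where summands come for free, rather than inside $L$, where purity of subcolimits is not available in general.
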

\begin{proof} By \cite[Corollary 1.7]{AR} we can assume that $I$ is an ordinal $\lambda$ so that the direct system is a $\lambda$-sequence of the form $\{X_{\alpha}; f_{\beta\alpha}\mid \alpha\leq\beta<\lambda\}.$ \\
  If $\lambda=\omega$, then the well known presentation of a countable
  direct limit (e.g. \cite[Lemma 2.12]{GT12}) gives a short exact
  sequence
 $$0\to \bigoplus_{\alpha<\omega}X_{\alpha}\to \bigoplus_{\alpha<\omega}X_{\alpha}\to \varinjlim\limits_{\alpha<\omega} X_{\alpha}\to 0.$$ This is an exact sequence with all its terms in $\mathcal A$ and where the first two terms belong to $ ^\perp M$. Hence by Lemma \ref{L:2-out-3}, we get that $\varinjlim\limits_{\alpha<\omega} X_{\alpha}\in \!\!^\perp M$.\\
 Now for an arbitrary limit ordinal $\lambda$, let
 \begin{equation}\label{eq:2}
   0\to K\to \bigoplus_{\alpha<\lambda}X_{\alpha}\to \varinjlim\limits_{\alpha<\lambda} X_{\alpha}\to 0
 \end{equation}
 be the canonical exact sequence associated to the direct limit. Since $(\mathcal A,\mathcal B)$ is hereditary, we get that $K\in \mathcal A$. Then, the exact sequence (\ref{eq:2}) has all its terms in $\mathcal A$, and clearly $\bigoplus_{\alpha<\lambda}X_{\alpha}\in \!\!^{\perp}M$.  So in view of Lemma \ref{L:2-out-3}, to get our claim we only need to show that $K\in \!\!^{\perp} M$. We use Lemma \ref{L:direct-unions} to prove this.\\
 As in the proof of \cite[Lemma 2.1]{GPGA} we have that $K$ is the
 direct union of a chain $\{K_{\alpha}\mid \alpha<\lambda\}$, where
 each $K_{\alpha}$ is a direct summand of
 $\bigoplus_{\alpha<\lambda}X_{\alpha}$. We need to check that the
 system $\{K_{\alpha}\mid \alpha<\lambda\}$ fulfills the requirements
 of Lemma \ref{L:direct-unions}, i.e.
 \begin{itemize}
 \item $K_{\alpha}\in \!\!^\perp M\cap \A$, for each $\alpha<\lambda$.
 \item $K_{\alpha}$ is pure in $K$, for each $\alpha<\lambda$.
 \end{itemize}
 Since both classes $\mathcal A$ and $ ^{\perp} M$ are closed under
 direct summands, we get that $K_{\alpha}\in \!\!^\perp M\cap \A $,
 for each $\alpha<\lambda$. For the second condition, note that
 $K_\alpha$ actually is a direct summand of $K$.
\end{proof}

We illustrate some consequences of the previous result.

\begin{prop}\label{P:Fp-inj} The following hold true:
  \begin{enumerate}
  \item An $\Inj$-periodic absolutely pure module is injective.
  \item Let $(\A, \B)$ be a hereditary cotorsion pair in $\Modr
    R$.
    Assume that $\A$ is closed under pure epimorphic images. Then
    every $\B$-periodic module is cotorsion. In particular, every
    $\Cot$-periodic module is trivial, i.e. cotorsion (and so every
    $\Inj$-periodic module is cotorsion).
  \item Assume that each finitely generated right ideal has finite
    flat dimension. Then each $\Inj$-periodic module is trivial.
  \end{enumerate}
\end{prop}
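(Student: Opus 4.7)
The plan is to derive all three statements from Theorem~\ref{T:direct-limits} by invoking it against suitably chosen hereditary cotorsion pairs, of which only two are needed: the classical Flat--Cotorsion pair $(\Flat,\Cot)$ and the trivial pair $(\Modr R,\Inj)$.

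For part (2), given a hereditary cotorsion pair $(\A,\B)$ with $\A$ closed under pure epimorphic images and a $\B$-periodic module $M$, I would take an arbitrary flat module $F$ and write it, by Lazard's theorem, as a directed colimit of finitely generated projectives $P_i$. Each $P_i$ lies in $\A$ (projectives belong to the left class of every cotorsion pair) and trivially in $^\perp M$, so Theorem~\ref{T:direct-limits} yields $F\in {}^\perp M$, i.e.\ $\Ext^1_R(F,M)=0$, which is the definition of $M$ being cotorsion. The two special cases then follow at once: applying this to the pair $(\Flat,\Cot)$ (whose left class is closed under pure quotients by the standard character-module argument) shows every $\Cot$-periodic module is cotorsion and hence trivial; and since $\Inj\subseteq\Cot$, every $\Inj$-periodic module is also $\Cot$-periodic, hence cotorsion.

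For part (1), I would use the trivial hereditary cotorsion pair $(\Modr R,\Inj)$: one checks immediately that ${}^\perp\Inj=\Modr R$ and $(\Modr R)^\perp=\Inj$, that $\Inj$ being coresolving makes the pair hereditary, and that the left class is tautologically closed under pure epimorphic images. Absolute purity of $M$ says $\Ext^1_R(F,M)=0$ for every finitely presented $F$, so every such $F$ lies in $^\perp M\cap\A$. Because every module $X$ is a directed colimit of finitely presented modules, Theorem~\ref{T:direct-limits} forces $X\in {}^\perp M$, i.e.\ $\Ext^1_R(X,M)=0$ for every $X$, which is the injectivity of $M$.

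For part (3), I would first apply (2) to conclude that $M$ is cotorsion. For any finitely generated right ideal $I$ the hypothesis gives a flat dimension bound $\mathrm{fd}(R/I)\le n$, so in any projective resolution of $R/I$ the $n$-th syzygy $K$ is flat; hence $\Ext^1_R(K,M)=0$ because $M$ is cotorsion, and dimension shifting gives $\Ext^{n+1}_R(R/I,M)=0$. Lemma~\ref{L:Ext-1} applied to $(\Modr R,\Inj)$ collapses all positive Ext onto $\Ext^1$, forcing $\Ext^1_R(R/I,M)=0$ for every finitely generated right ideal $I$. An arbitrary right ideal $J$ is the directed union of its finitely generated sub-ideals $I_\alpha$, so $R/J=\varinjlim R/I_\alpha$ is a directed colimit of finitely presented modules all lying in $^\perp M$; a further application of Theorem~\ref{T:direct-limits} yields $\Ext^1_R(R/J,M)=0$ for every right ideal $J$, and Baer's criterion concludes that $M$ is injective. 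The only step with any real content is the identification of the correct cotorsion pair to feed into Theorem~\ref{T:direct-limits}; once chosen, every remaining verification is routine.
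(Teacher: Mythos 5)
Your proof is correct and follows essentially the same route as the paper: all three parts are reduced to Theorem~\ref{T:direct-limits} by invoking it against the cotorsion pairs $(\Flat,\Cot)$ and $(\Modr R,\Inj)$, exactly as in the paper's proof. The only (immaterial) deviation is in part~(3), where you apply the theorem to the directed system $\{R/I_\alpha\}$ of finitely presented cyclic modules, whereas the paper first applies it to the ideals $\{I_\gamma\}$ themselves, obtaining $I\in {}^\perp M$, and then passes to $R/I$ by one final dimension shift along $0\to I\to R\to R/I\to 0$ together with Lemma~\ref{L:Ext-1}.
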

\begin{proof}
  (1) Let $M$ be an $\Inj$-periodic absolutely pure module. Then
  $\Ext^1_R(F, M)=0$ for every finitely presented module.  By
  Theorem~\ref{T:direct-limits}, $\Ext^1_R(X, M)=0$ for every module
  $X$, hence $M$ is injective.

\noindent
(2) Let $M$ be a $\B$-periodic module. Since $\Proj\subseteq \A$,
Theorem~\ref{T:direct-limits} implies that
$\Flat\subseteq ^\perp\!\! M$, hence $M$ is cotorsion.

\noindent
(3) Let $M$ be an $\Inj$-periodic module. We only have to show that
$\Ext^1_R\left(\frac{R}{I},M\right)=0$ for each right ideal $I$ of
$R$. Let $I$ be a right ideal and write
$I=\bigcup_{\gamma \in \Gamma}I_\gamma$ as a direct union of finitely
generated right ideals. By (2) $M$ is cotorsion so, for each
$\gamma \in \Gamma$, there exists a non zero natural number $n_\gamma$
such that $\Ext^{n_\gamma}_R(I_\gamma,M)=0$. By Lemma \ref{L:Ext-1}
$I_\gamma \in {^\perp}M$, and by Theorem \ref{T:direct-limits} we get
that actually $I$ belongs to ${^\perp}M$. But then, again by Lemma
\ref{L:Ext-1}, we get that
$\Ext^2_R\left(\frac{R}{I},M\right)\cong
\Ext^1_R\left(\frac{R}{I},M\right) = 0$.
\end{proof}

The same argument used in Example \ref{e:PeriodicNonProjective} can be
used to see that there exist $\Inj$-periodic modules which are not
pure injective.

\begin{example}
  Let $R$ be a QF ring which is not right pure semisimple. We can
  apply Corollary \ref{p:ConstructionPeriodic} with $\mathcal C$ the
  class $\Inj$ and $\mathcal D$ the class $\PInj$ to construct an
  $\Inj$-periodic module which is not pure injective. In particular,
  $M$ is a non-trivial $\Inj$-periodic module.
\end{example}

\begin{rem}
  Note that $\Cot$ does not satisfy the hyphotesis of Corollary
  \ref{p:ConstructionPeriodic}, so that we cannot use that result to
  construct a non-trivial $\Cot$-periodic module. This is because if
  $\Cot$ is generating, then $R_R$ is $\Sigma$-cotorsion (that is,
  $R^{(I)}$ is a cotorsion right module for each set $I$) and, by
  \cite[Corollary 10]{GH}, $R$ is right perfect. This means that
  $\Cot = \Modr R$. Here we give a short proof that right
  $\Sigma$-cotorsion rings are right perfect.
\end{rem}
\begin{thm}\label{t:sigmacot}
  If $R_R$ is $\Sigma$-cotorsion, then $R$ is right perfect.
\end{thm}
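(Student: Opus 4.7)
The plan is to show that every flat right $R$-module is projective; then the theorem follows from Bass's classical characterisation of right perfect rings. I will carry this out in two stages.

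\medskip\noindent\emph{Stage 1: countably generated flat modules are projective.} Let $F$ be a countably generated flat right $R$-module. By Lazard's theorem, $F$ can be written as a countable direct limit
\[
F \cong \varinjlim_{n\in\omega} P_n
\]
of finitely generated free modules $P_n$ with connecting maps $\sigma_n\mathcolon P_n\to P_{n+1}$. The standard presentation of such a colimit (as in \cite[Lemma 2.12]{GT12}) yields an exact sequence
\[
0 \to L \xrightarrow{1-\sigma} L \to F \to 0,
\]
where $L := \bigoplus_{n\in\omega} P_n$ and $\sigma$ is assembled from the $\sigma_n$ in the usual way. The module $L$ is free of rank at most $\aleph_0$, hence is a direct summand of $R^{(\omega)}$, and is therefore cotorsion by the $\Sigma$-cotorsion hypothesis. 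Since $F$ is flat, $\Ext^1_R(F,L)=0$, so the sequence splits and $F$ is a direct summand of the free module $L$. Thus $F$ is projective.

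\medskip\noindent\emph{Stage 2: every flat module is projective.} Let $F$ be an arbitrary flat right $R$-module. By standard results on filtrations of flat modules (Lazard's theorem together with Hill's lemma; see e.g.\ \cite{GT12}), $F$ admits a continuous well-ordered ascending chain of pure submodules $\{F_\alpha : \alpha \le \kappa\}$ with $F_0=0$, $F_\kappa=F$, and each factor $F_{\alpha+1}/F_\alpha$ countably generated. Purity of $F_\alpha$ in $F_{\alpha+1}$ (inherited from purity in $F$) together with flatness of $F_{\alpha+1}$ ensures that each factor $F_{\alpha+1}/F_\alpha$ is flat, and hence projective by Stage 1. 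Consequently every inclusion $F_\alpha \hookrightarrow F_{\alpha+1}$ splits. Choosing splittings $F_{\alpha+1} = F_\alpha \oplus P_\alpha$ with $P_\alpha$ projective, and using continuity of the chain at limit ordinals, a transfinite induction gives $F_\alpha \cong \bigoplus_{\beta < \alpha} P_\beta$ for every $\alpha \leq \kappa$. In particular $F \cong \bigoplus_{\beta < \kappa} P_\beta$ is projective. Bass's theorem now implies that $R$ is right perfect.

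\medskip The main obstacle is the passage from countably generated to arbitrary flat modules in Stage 2: this requires the Raynaud--Gruson / Hill-type pure filtration of flat modules, without which one cannot upgrade the countably generated conclusion. Stage 1, by contrast, is a direct consequence of the hypothesis applied to the canonical Lazard presentation, and is really the only place where $\Sigma$-cotorsion enters — indeed only the cotorsion of $R^{(\omega)}$ (and its direct summands) is needed.
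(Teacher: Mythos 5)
Your Stage 1 is correct and is essentially a modest generalisation of the paper's core step: the paper applies the $\Sigma$-cotorsion hypothesis to one specific countably generated free presentation $0\to G\to F\to F/G\to 0$ of a flat module coming from Bass's Lemma 1.1, whereas you abstract this to show that \emph{every} countably presented flat module $M$ is projective using the Lazard presentation $0\to L\to L\to M\to 0$. (One small caveat: the clean $\omega$-indexed Lazard presentation you invoke is the standard one for \emph{countably presented} flat modules; you write ``countably generated,'' and the two notions can differ over uncountable rings, so it is safest to state Stage 1 for countably presented flats.)

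The gap is in Stage 2. You assert that an arbitrary flat module $F$ admits a continuous chain of pure submodules $\{F_\alpha\}$ with \emph{countably generated} flat factors, citing ``Lazard's theorem together with Hill's lemma.'' This is not what the cited machinery gives. The Bican--El Bashir--Enochs deconstruction of $\Flat$ (the result underlying flat covers, and the one recorded in Göbel--Trlifaj) produces a continuous pure filtration whose successive factors have cardinality at most $|R|+\aleph_0$; Hill's lemma refines an existing filtration but does not shrink the factor size below that deconstruction cardinal. So when $R$ is uncountable, the factors $F_{\alpha+1}/F_\alpha$ need not be countably generated, and Stage 1 does not apply to them. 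As written, Stage 2 is unproven, and without it your plan of showing that \emph{all} flat modules are projective does not go through.

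Fortunately, Stage 2 is also unnecessary: it already suffices to know that the particular countably presented flat module coming from a descending chain of principal right ideals is projective. Bass's Lemma 1.1 produces countably generated free modules $G\subseteq F$ with $F/G$ flat, and his Lemma 1.3 says that if $0\to G\to F\to F/G\to 0$ splits, the chain of principal right ideals terminates. So from your Stage 1 (or, more economically, from the single observation that $G$ is cotorsion by $\Sigma$-cotorsion, hence $\Ext^1_R(F/G,G)=0$) the splitting follows, and Bass's lemma yields right perfectness directly. This is exactly the paper's proof: it bypasses any claim about arbitrary flat modules and uses only the one countably generated flat module that actually tests perfectness. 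I would replace Stage 2 by this reduction, and then your argument matches the paper's in both substance and economy.
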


\begin{proof}
  Let $a_1R \geq a_1a_2 R \geq \cdots$ be a descending chain of
  principal right ideals for some sequence $a_1, a_2, \cdots$ of
  elements of $R$ and denote by $G$ and $F$ the modules constructed in
  \cite[Lemma 1.1]{Bass}. Then $F$ and $G$ are free and $G$ is a
  submodule of $F$. Moreover, it is proved in \cite[Lemma 1.1]{Bass}
  that $\frac{F}{G}$ is flat. Now, using that $G$ is cotorsion, as
  $R_R$ is $\Sigma$-cotorsion, we get that the short exact sequence
  $$0\longrightarrow G\longrightarrow F\longrightarrow \frac{F}{G}\longrightarrow 0$$
  splits. By \cite[Lemma 1.3]{Bass}, the sequence
  $a_1R \geq a_1a_2R \geq \cdots$ terminates. This implies that $R$ is
  right perfect.
\end{proof}

Now, regarding Proposition \ref{P:Fp-inj}(3), we show an example of a
ring with infinite weak global dimension but such that each finitely
generated right ideal has finite flat dimension.

\begin{example}
  Let $k$ be a field and let $k[x_1,\ldots,x_n]$ be the polynomial
  ring in $n$ variables over $k$. The ring
  $R=\varinjlim\limits_{n<\omega}k[x_1,\ldots,x_n]$ has infinite weak
  global dimension but each finitely generated ideal of $R$ has finite
  projective dimension (see Glaz, \cite[p. 202]{Glaz89}).
\end{example}

We can apply Proposition \ref{P:Fp-inj} to Gorenstein injective modules.
\begin{cor}\label{c:GorInj}
  \begin{enumerate}
  \item Any absolutely pure strongly Gorenstein injective module is
    injective.

  \item Assume that each finitely generated right ideal has finite
    flat dimension. Then each Gorenstein injective module is
    injective. In particular, if a ring has finite weak global
    dimension, then each Gorenstein injective module is injective.
  \item Assume that each finitely generated right ideal has finite
    injective dimension. Then each Gorenstein injective module is
    injective.
  \end{enumerate}
\end{cor}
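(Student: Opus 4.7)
All three parts reduce to showing that a \emph{strongly} Gorenstein injective module $M$ is injective under the respective hypothesis, because by \cite[Theorem 2.7]{BennisMahdou} every Gorenstein injective module is a direct summand of such an $M$, and the class of injectives is closed under direct summands. Recall that $M$ strongly Gorenstein injective means $M$ is both Gorenstein injective and $\Inj$-periodic.

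Parts (1) and (2) then follow at once from Proposition~\ref{P:Fp-inj}. For (1), $M$ is absolutely pure and $\Inj$-periodic, so Proposition~\ref{P:Fp-inj}(1) applies. For (2), Proposition~\ref{P:Fp-inj}(3) applied to the $\Inj$-periodic module $M$ yields injectivity under the finite-flat-dimension hypothesis on finitely generated right ideals; the ``in particular'' clause holds because finite weak global dimension forces every module, and \emph{a fortiori} every finitely generated right ideal, to have finite flat dimension.

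Part (3) requires a variant of the proof of Proposition~\ref{P:Fp-inj}(3) in which flat dimension is replaced by injective dimension, this time crucially using that $M$ is \emph{Gorenstein} injective (not merely $\Inj$-periodic). The standard dimension shift along a finite injective resolution, combined with the vanishing $\Ext^{k}_R(E, M) = 0$ for $k\geq 1$ and $E$ injective (built into the total acyclicity defining $M$), yields $\Ext^1_R(X, M) = 0$ for every module $X$ of finite injective dimension. By hypothesis each finitely generated right ideal $I_\gamma$ satisfies this, so $I_\gamma \in {^\perp}M$. Writing an arbitrary right ideal as a direct union $I=\bigcup_\gamma I_\gamma$ of finitely generated ones, one applies Theorem~\ref{T:direct-limits} to a hereditary cotorsion pair $(\A, \B)$ in which $M$ is $\B$-periodic, $\A$ is closed under pure epimorphic images, and each $I_\gamma\in \A\cap{^\perp}M$; the natural candidate is the cotorsion pair whose left class consists of modules of finite injective dimension. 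The expected principal obstacle is verifying pure-epimorphic closure of this left class in the required generality. Once this is established, Theorem~\ref{T:direct-limits} gives $I\in{^\perp}M$, i.e.\ $\Ext^1_R(I, M)=0$; the short exact sequence $0\to I\to R\to R/I\to 0$ then yields $\Ext^2_R(R/I, M)\cong \Ext^1_R(I, M)=0$, and applying $\Hom_R(R/I, -)$ to $0\to M\to E\to M\to 0$ with $E$ injective produces the periodicity isomorphism $\Ext^1_R(R/I, M)\cong \Ext^2_R(R/I, M)=0$. Baer's criterion then concludes that $M$ is injective.
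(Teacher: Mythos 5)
Your handling of parts (1) and (2) is correct and follows the paper's route: reduce via Bennis--Mahdou to the strongly Gorenstein injective case and apply Proposition~\ref{P:Fp-inj}(1) and (3).

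For part (3), however, there is a genuine gap, and it is precisely the one you flag. You propose invoking Theorem~\ref{T:direct-limits} with a hereditary cotorsion pair whose left class $\A$ is the class of modules of finite injective dimension. But that class is not (in general) the left half of a cotorsion pair, and--as you yourself note--it is not known to be closed under pure epimorphic images, so the hypotheses of Theorem~\ref{T:direct-limits} simply are not satisfied. What the paper uses (implicitly, by saying ``argue as in Proposition~\ref{P:Fp-inj}(3)'') is the \emph{trivial} hereditary cotorsion pair $(\Modr R,\Inj)$: since $M$ is strongly Gorenstein injective it is $\Inj$-periodic, the left class $\Modr R$ is trivially closed under pure epimorphic images, and each finitely generated right ideal $I_\gamma$, having finite injective dimension, lies in $\Modr R\cap{}^\perp M$ by your (correct) dimension-shifting observation that ${}^\perp M$ contains every module of finite injective dimension. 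Theorem~\ref{T:direct-limits} then gives $I=\bigcup_\gamma I_\gamma\in{}^\perp M$, and the final Baer-criterion step you wrote goes through verbatim, using Lemma~\ref{L:Ext-1} with $(\A,\B)=(\Modr R,\Inj)$ to transfer the vanishing from $\Ext^2_R(R/I,M)$ to $\Ext^1_R(R/I,M)$. So the argument is correct in outline, but the cotorsion pair you chose does not work; replacing it by $(\Modr R,\Inj)$ closes the gap and makes the ``obstacle'' disappear.
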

\begin{proof} (1) and (2) follow directly from Proposition
  \ref{P:Fp-inj}.

  (3) Let $M$ be a strongly Gorenstein injective module. Since $M$ is
  in particular Gorenstein injective, $^\perp M$ contains the modules
  of finite injective dimension. Then the conclusion follows arguing
  as in the proof of Proposition~\ref{P:Fp-inj}~(3).
\end{proof}

We ask the following:
\begin{ques}\label{Q:2} Is every absolutely pure Gorenstein injective
  module injective?
\end{ques}

As in the case of flat Gorenstein projective modules, we can give a
partial answer to this question in the following result. The proof is
similar to the proof of Proposition \ref{p:FlatGorensteinProj}, but
using Corollary \ref{P:Fp-inj}(1).

\begin{prop}
  Suppose that $R$ is left coherent and $\FID_{\Abs}(R) < \infty$. If
  a cycle in an acyclic complex of injective modules is absolutely
  pure, then all cycles are injective modules.  In particular, every
  absolutely pure Gorenstein injective module is injective.
\end{prop}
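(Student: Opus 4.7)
The plan is to mirror the proof of Proposition \ref{p:FlatGorensteinProj}, working in the dual setting and reducing the global statement to an application of Proposition \ref{P:Fp-inj}(1). Given an acyclic complex $I$ of injective modules with $Z_0(I)=M$ absolutely pure, I would first show that every cycle $Z_n(I)$ is absolutely pure, then assemble them into a single absolutely pure $\Inj$-periodic module. The ``in particular'' claim will follow by applying the main assertion to a totally acyclic injective complex witnessing a given absolutely pure Gorenstein injective module.

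Two closure properties of $\Abs$, both provided by left coherence of $R$, drive the argument: (i) $\Abs$ is closed under cokernels of monomorphisms, because for each finitely presented module $F$ the exact sequence $\Ext^1_R(F,B)\to\Ext^1_R(F,C)\to\Ext^2_R(F,A)\cong\Ext^1_R(\Omega F,A)$ attached to $0\to A\to B\to C\to 0$ has both outer terms vanishing once $A,B\in\Abs$, using that $\Omega F$ is finitely presented over a left coherent ring; and (ii) $\Abs$ is closed under arbitrary direct products, since it equals the right $\Ext^1$-orthogonal of a class of modules. Iterating (i) along the short exact sequences $0\to Z_n(I)\to I_n\to Z_{n-1}(I)\to 0$ downward from $Z_0(I)=M$ yields $Z_n(I)\in\Abs$ for all $n\leq 0$. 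For $n\geq 1$, splicing these sequences produces an injective coresolution $0\to Z_n(I)\to I_n\to I_{n-1}\to\cdots\to I_1\to Z_0(I)\to 0$ of length $n$ ending in the absolutely pure module $Z_0(I)$, so $Z_n(I)$ has finite $\Abs$-injective dimension; with $d=\FID_{\Abs}(R)$, this dimension is forced to be at most $d$. A dual Schanuel argument then shows that the appropriate cosyzygy of $Z_{n+d}(I)$, which in the natural coresolution equals $Z_n(I)$ up to injective summands, is absolutely pure, so $Z_n(I)$ itself is absolutely pure as a direct summand.

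Once every cycle is absolutely pure, I would form $N=\prod_{n\in\bbZ}Z_n(I)$. Taking the product of the short exact sequences $0\to Z_n(I)\to I_n\to Z_{n-1}(I)\to 0$ and reindexing yields
\[
0\to N\to \prod_{n\in\bbZ}I_n\to N\to 0,
\]
whose middle term is injective because arbitrary products of injectives are injective; thus $N$ is $\Inj$-periodic. By (ii) $N$ is also absolutely pure, so Proposition \ref{P:Fp-inj}(1) forces $N$ to be injective, and each $Z_n(I)$ is injective as a direct summand of $N$. The main obstacle I anticipate is the positive-degree half of the first step: identifying the ``deep'' cosyzygy produced by $\FID_{\Abs}(R)\leq d$ with $Z_n(I)$ up to injective summands via dual Schanuel and extracting that $Z_n(I)$ is absolutely pure. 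The left coherence of $R$ does double duty, both in the inductive argument for $n\leq 0$ and in securing closure of $\Abs$ under products at the assembly step.
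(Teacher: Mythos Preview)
Your proof is correct and follows exactly the approach the paper indicates---dualizing Proposition~\ref{p:FlatGorensteinProj}, replacing direct sums by direct products (necessary here since $\Inj$ is closed under products but not under sums in general), and invoking Proposition~\ref{P:Fp-inj}(1) in place of Corollary~\ref{c:SplittingSequence}(2). One minor slip: closure of $\Abs$ under products holds for any right $\Ext^1$-orthogonal class and does not require coherence, so your closing ``double duty'' remark overstates the role of that hypothesis; coherence is used only for the coresolving step.
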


\section{Acyclic complexes and $\C$-periodic modules
}\label{S:acyclic}

This section is devoted to exploit the power of periodic modules in
shortening and simplifying recent proofs of some meaningful results in
homotopy categories.

We apply the previous results to classes of complexes of
$R$-modules. The following result has the rank of Theorem, because of
its relevant statements and its subsequent consequences. But the proof
is an easy and immediate application of propositions \ref{P:cycles}
and \ref{P:Fp-inj}.
\begin{thm}\label{T:cot-Fp}
  \begin{enumerate}
  \item Every acyclic complex of injective modules with absolutely pure cycles
    is contractible,
    i.e. $\dw\Inj\cap\widetilde{\Abs}=\widetilde{\Inj}.$
  \item Every acyclic complex of cotorsion modules has cotorsion
    cycles, that is, $\ex\Cot$ $=\widetilde{\Cot}$. In particular,
    every acyclic complex of flat cotorsion modules with flat cycles
    is contractible, i.e.
    $\widetilde{\Flat}\cap \dw\Cot=\widetilde{\Flat\cap\Cot}$.
  \item Every pure acyclic complex of pure projective modules is
    contractible. As a consequence, if there exists a pure exact
    sequence
$$
0\to M\stackrel{f}{\to} P_n\to P_{n-1}\to \cdots\to P_1\to
P_0\stackrel{g}{\to} M\to 0
$$
in $\Modr R$, where the modules $P_0,\ldots, P_n$ are pure projective,
then $M$ is pure projective.
\end{enumerate}
\end{thm}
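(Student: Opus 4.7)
The plan is to use Proposition~\ref{P:cycles} as a bridge that turns each complex-level statement into a module-level statement about $\mathcal C$-periodic modules already settled earlier. The mechanism is uniform: given an acyclic complex $X$ with components in a class $\mathcal C$ that is closed under direct sums (or direct products), splicing the canonical short exact sequences $0\to Z_nX\to X_n\to Z_{n-1}X\to 0$ over $n\in\bbZ$ produces a $\mathcal C$-periodic module $\bigoplus_n Z_nX$ (respectively $\prod_n Z_nX$) of which every cycle $Z_nX$ is a direct summand. Hence, once this periodic module is shown to lie in a class $\mathcal D$ closed under direct summands, so does each cycle.

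For (1) take $\mathcal C=\Inj$, which is closed under direct products, and assume the cycles of $X$ are absolutely pure. Then $\prod_n Z_nX$ is both absolutely pure and $\Inj$-periodic, so by Proposition~\ref{P:Fp-inj}(1) it is injective; each $Z_nX$ is therefore injective, the sequences $0\to Z_nX\to X_n\to Z_{n-1}X\to 0$ split degreewise, and $X$ is contractible. For (2), $\mathcal C=\Cot$ is likewise closed under direct products, and Proposition~\ref{P:Fp-inj}(2) applied to the hereditary cotorsion pair $(\Flat,\Cot)$, whose left class $\Flat$ is closed under pure epimorphic images, gives that every $\Cot$-periodic module is cotorsion. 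Transporting this back through Proposition~\ref{P:cycles} yields $\ex\Cot=\widetilde\Cot$. The subsidiary identity $\widetilde\Flat\cap\dw\Cot=\widetilde{\Flat\cap\Cot}$ then follows because the step just made promotes flat cycles of a complex with cotorsion components to flat cotorsion cycles, and contractibility comes for free from $\Ext^1_R(\Flat,\Cot)=0$.

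For (3) one works in the pure-exact structure. Direct sums of pure exact sequences remain pure exact, and $\PProj$ is closed under direct sums, so the splicing construction applied to a pure acyclic complex of pure projective modules produces a pure $\PProj$-periodic module $\bigoplus_n Z_nX$; by Corollary~\ref{c:SplittingSequence}(1) this module is pure projective, hence so is each $Z_nX$, and $X$ is contractible. For the consequence, the given pure exact sequence $0\to M\to P_n\to\cdots\to P_0\to M\to 0$ is concatenated periodically into an unbounded pure acyclic complex of pure projective modules in which $M$ occurs as a cycle, and the preceding assertion forces $M$ to be pure projective.

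The only delicate point, and the one place requiring a small adjustment of Proposition~\ref{P:cycles}, is the passage to the pure-exact setting in (3); all ingredients (direct sums, cycles, splicing) are manifestly compatible with purity, so the proof of the proposition adapts verbatim, and we expect this transcription to be the only nontrivial verification in what is otherwise a direct application of the periodic-module results proved in the previous sections.
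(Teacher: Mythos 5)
Your proof is correct and follows the same route as the paper: items (1) and (2) are obtained by feeding the periodic module $\prod_n Z_nX$ (or the splicing construction underlying Proposition~\ref{P:cycles}) into Proposition~\ref{P:Fp-inj}(1) and (2) respectively, and item (3) combines Corollary~\ref{c:SplittingSequence}(1) with the pure version of Proposition~\ref{P:cycles}, the second assertion of (3) coming from periodically concatenating the given pure exact sequence. The only extra care you take, rightly, is to unpack the splicing argument for (1) since the stated Proposition~\ref{P:cycles} does not directly accommodate the additional hypothesis of absolutely pure cycles, but this is exactly what the paper's terse ``direct consequence'' intends.
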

\begin{proof}
  The statements (1) and (2) are direct consequences of propositions
  \ref{P:cycles} and \ref{P:Fp-inj}. The assertion $(3)$ follows from
  Corollary \ref{c:SplittingSequence}(1) and (the pure version of)
  Proposition \ref{P:cycles}. Finally, the second part in statement
  (3) follows because from that pure exact sequence, we get the
  following pure acyclic complex with pure projective
  components
  $$\cdots\to P_0\stackrel{fg}{\to} P_n\to \cdots \to
  P_0\stackrel{fg}{\to} P_n\to\cdots$$
\end{proof}
\begin{rem}
  The assertions (1) and (3) were already proved by
  {\v{S}}{\v{t}}ov{\'{\i}}{\v{c}}ek \cite[Corollary
  5.5]{Stopurity} (see also Emmanouil \cite[Corollary 3.7]{Emm16} for (3)).
  The consequence in statement (3) was firstly shown
  by Simson in \cite[Theorem 1.3]{Sim02}. We just want to emphasize
  how these statements easily follow from the corresponding properties
  of periodic modules.  What seems to be unknown is the remarkable
  statement (2). Notice that, as a consequence of (2), we get that
  every acyclic complex of injectives has cotorsion cycles. This was also
  shown by {\v{S}}{\v{t}}ov{\'{\i}}{\v{c}}ek \cite[Corollary
  5.9]{Stopurity}.
\end{rem}

\begin{thm}\label{T:dgcotdwcot}
  Let $C$ be a complex of cotorsion modules. Then every chain map
  $f\mathcolon F\to C$, where $F\in\widetilde\Flat$ is null-homotopic.
  That is, the classes $\dg\Cot$ and $\dw\Cot$ coincide.
\end{thm}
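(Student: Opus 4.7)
The plan is to combine completeness of the cotorsion pair $(\widetilde{\Flat},\dg\Cot)$ in $\Ch(R)$ with Theorem \ref{T:cot-Fp}(2) to reduce the nontrivial inclusion $\dw\Cot\subseteq\dg\Cot$ (the reverse being immediate from the definition) to a splitting problem. Fix $C\in\dw\Cot$. Since the hereditary cotorsion pair $(\Flat,\Cot)$ is complete in $\Modr R$, Gillespie's machinery provides that $(\widetilde{\Flat},\dg\Cot)$ is a complete cotorsion pair in $\Ch(R)$, so one can find a short exact sequence
$$0\to C\to Y\to A\to 0$$
with $Y\in\dg\Cot$ and $A\in\widetilde{\Flat}$. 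As $\dg\Cot=\widetilde{\Flat}^\perp$ is closed under direct summands, it is enough to split this sequence.

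In each degree $A_n$ is flat and $C_n$ is cotorsion, so the sequence splits degreewise, giving $Y_n\cong C_n\oplus A_n$. Since $Y_n$ is cotorsion, so is $A_n$, and therefore $A$ is an acyclic complex of cotorsion modules. The key step is to invoke Theorem \ref{T:cot-Fp}(2): every cycle $Z_nA$ is then cotorsion. Combined with the flatness of $Z_nA$ (from $A\in\widetilde{\Flat}$), one gets $\Ext^1_R(Z_{n-1}A,Z_nA)=0$, so each short exact sequence $0\to Z_nA\to A_n\to Z_{n-1}A\to 0$ splits, and $A$ is isomorphic to a direct sum of disk complexes on the flat-cotorsion modules $Z_nA$.

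For a disk complex $\Delta$ supported in degrees $n$ and $n-1$ on a module $M$, a projective resolution of $M$ in $\Modr R$ induces one of $\Delta$ in $\Ch(R)$, yielding $\Ext^1_{\Ch(R)}(\Delta,X)\cong\Ext^1_R(M,X_n)$ for any complex $X$; this vanishes when $M$ is flat and $X_n$ is cotorsion. Applying this summand-wise to $A$ and to $X=C$ gives $\Ext^1_{\Ch(R)}(A,C)=0$, so the chosen sequence splits and $C$ is a direct summand of $Y$, hence $C\in\dg\Cot$. The main obstacle is precisely the cotorsion of the cycles of $A$: without Theorem \ref{T:cot-Fp}(2) the degreewise splittings could not be assembled into the global sum-of-disks decomposition, and the argument would collapse. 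Once that key ingredient is in place the rest of the reasoning is formal.
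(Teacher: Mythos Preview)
Your proof is correct and follows essentially the same approach as the paper's: both take a special $\widetilde{\Flat}$-preenvelope $0\to C\to Y\to A\to 0$ from the complete cotorsion pair $(\widetilde{\Flat},\dg\Cot)$, observe degreewise splitting, and then invoke Theorem~\ref{T:cot-Fp}(2) to force $A$ to be contractible. The only difference is in the final step: the paper notes that contractibility of $A$ together with the degreewise splitting makes $C\to Y$ a homotopy equivalence (so $C\in\dg\Cot$ since this class is homotopy-invariant), whereas you unpack contractibility as a sum-of-disks decomposition and compute $\Ext^1_{\Ch(R)}(A,C)=0$ directly to split the sequence; this is a minor variation, not a genuinely different route.
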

\begin{proof}
  Let us consider the complete hereditary cotorsion pair
  $(\tilclass\Flat,\dg\Cot)$ in $\Ch(R)$ (\cite[Corollary
  4.10]{G1}). Then, there is an exact sequence
  $$0\to C\to D\to G\to 0,$$ with $D\in \dg\Cot$,
  $G\in \tilclass \Flat$ and the sequence splits on each degree. Since
  $D_n$ is cotorsion, we follow that $G_n$ is cotorsion for each
  $n \in \mathbb Z$. Therefore $G\in \tilclass \Flat\cap
  \dw\Cot$.
  Now, by Theorem \ref{T:cot-Fp}(2) we get that $G$ is
  contractible. Therefore $C$ and $D$ are homotopically equivalent,
  and so $C\in \dg\Cot$.
\end{proof}

\subsection{Application to finitely accessible additive categories} 
Throughout this section $\G$ will denote a {\it finitely accessible}
additive category. That is, $\G$ has all direct limits, the class of
finitely presented objects is skeletally small and every object in
$\G$ is a direct limit of finitely presented objects. A well-known
Representation Theorem (see \cite[Corollary 2.1.9]{MaPa},
\cite[Theorem 1.4]{CB} and \cite[Theorem 2.26]{AR} and the remark that
follow) states the following

\begin{thm}
  Every finitely accessible additive category $\mathcal G$ is
  equivalent to the full subcategory $\mathrm{Flat}(A)$ of the
  category $\operatorname{Mod-}\!\! A$ of unitary right $A$-modules
  consisting of flat right $A$-modules where $A$ is the functor ring
  of $\mathcal G$ (that is, a ring with enough idempotents). This
  equivalence gives a 1-1 correspondence between pure exact sequences
  in $\mathcal G$ and exact sequences in $\mathrm{Flat}(A)$.
\end{thm}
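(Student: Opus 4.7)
The plan is to construct the equivalence explicitly via a Yoneda-style functor and then match pure exactness with degreewise exactness on the module side. First I would fix a (skeletally small) set of representatives $\{U_i\}_{i\in I}$ of the isomorphism classes of finitely presented objects in $\G$. The functor ring is then defined as $A=\bigoplus_{i,j\in I}\Hom_\G(U_i,U_j)$, with multiplication given by composition where defined and zero otherwise; the identities $e_i=1_{U_i}$ form a family of pairwise orthogonal idempotents with $A=\bigoplus_i Ae_i=\bigoplus_i e_iA$, so that $A$ is a ring with enough idempotents, and a unitary right $A$-module is by definition one satisfying $M=\bigoplus_i Me_i$.

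Next I would consider the functor $\Phi\colon\G\to\Modr A$ defined by
\[
\Phi(X)=\bigoplus_{i\in I}\Hom_\G(U_i,X),
\]
with the right $A$-action given by precomposition. On the chosen representatives $\Phi(U_j)\cong e_jA$ is a finitely generated projective unitary module, and a direct Yoneda computation shows that $\Phi$ restricts to a fully faithful embedding of $\mathrm{fp}(\G)$ onto the category of finitely generated projective unitary right $A$-modules. Because $\Hom_\G(U_i,-)$ preserves direct limits for every finitely presented $U_i$, the functor $\Phi$ preserves direct limits; since every object of $\G$ is a filtered colimit of finitely presented objects, $\Phi(X)$ is a filtered colimit of the projectives $e_jA$ and hence flat by Lazard's theorem, so $\Phi$ lands in $\Flat(A)$.

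To prove that $\Phi\colon\G\to\Flat(A)$ is an equivalence I would invoke Lazard on the target: every flat unitary right $A$-module $M$ is a filtered direct limit of finitely generated projectives, each a summand of a finite direct sum of the $e_jA$. Pulling this diagram back along the fully faithful embedding on the finitely presented side produces a filtered diagram in $\mathrm{fp}(\G)$, whose direct limit $X\in\G$ satisfies $\Phi(X)\cong M$, giving essential surjectivity. Global full faithfulness of $\Phi$ then follows formally, since both $\G$ and $\Flat(A)$ are the free completion of the same small category $\mathrm{fp}(\G)$ under filtered colimits, and $\Phi$ is a filtered-colimit-preserving extension of an equivalence on that small subcategory.

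For the pure-exact correspondence, I would use the intrinsic characterization that a sequence $0\to X\to Y\to Z\to 0$ in $\G$ is pure exact iff $\Hom_\G(U,-)$ turns it into a short exact sequence of abelian groups for every finitely presented $U$, equivalently for every $U_i$. By the componentwise definition of $\Phi$, this is exactly the condition that $0\to\Phi(X)\to\Phi(Y)\to\Phi(Z)\to 0$ be exact at each idempotent $e_i$, i.e.\ exact in $\Modr A$. The main obstacle I anticipate is the bookkeeping around rings with enough idempotents and the unitary hypothesis: one must verify that $\Phi(X)$ is genuinely unitary (it is, by the direct-sum definition), that the finitely generated projective unitary $A$-modules are exactly the direct summands of finite sums $e_{i_1}A\oplus\cdots\oplus e_{i_n}A$, and that Lazard's characterization of flat modules carries over faithfully to the non-unital setting used here.
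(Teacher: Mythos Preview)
The paper does not give its own proof of this theorem: it is quoted as a ``well-known Representation Theorem'' with references to \cite{MaPa}, \cite{CB}, and \cite{AR}, and is used thereafter as a black box. So there is no in-paper argument to compare against.

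That said, your outline is essentially the standard proof one finds in those references, and it is correct in its broad strokes. The construction of the functor ring $A$ and of $\Phi(X)=\bigoplus_i\Hom_\G(U_i,X)$ is the usual one; the identification of $\Phi$ on $\mathrm{fp}(\G)$ with the finitely generated projective unitary $A$-modules via Yoneda, the use of Lazard to see that $\Phi$ lands in $\Flat(A)$ and is essentially surjective, and the $\mathrm{Ind}$-completion argument for full faithfulness are all standard and sound. Your reading of the pure-exact correspondence is also the right one: pure exactness in $\G$ is detected by $\Hom_\G(U_i,-)$, which is precisely exactness of $\Phi(-)$ at each idempotent $e_i$, hence exactness in $\Modr A$; conversely any short exact sequence in $\Flat(A)$ is pure in $\Modr A$ (flat cokernel), so it is exact at each $e_i$ and hence comes from a pure exact sequence in $\G$. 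The caveats you flag about the non-unital setting (unitary modules, the form of finitely generated projectives, Lazard for rings with enough idempotents) are exactly the points that require care, but all of them go through without surprises.
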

In other words, $\mathcal{G}$ with its pure exact structure $\E$ is
equivalent to $\mathrm{Flat}(A)$ with its canonical exact structure
inherited from $\Modr A$. In particular, the equivalence takes
injective objects in $(\mathcal{G};\mathcal E)$ (i.e. pure injectives)
to injective objects in $\mathrm{Flat}(A)$ (cotorsion flat modules).
Thus, from Proposition \ref{P:Fp-inj}(2), we immediately get the
following.
\begin{cor}\label{c:pInjperiodic.trivial}
  Every pure $\PInj$-periodic object of $\G$ is trivial (i.e. pure
  injective).
\end{cor}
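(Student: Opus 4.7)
The plan is to transfer the statement to the module category of the functor ring $A$ of $\G$ via the Representation Theorem recalled just above, and then apply Proposition~\ref{P:Fp-inj}(2).

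Suppose $M \in \G$ fits into a pure exact sequence $0 \to M \to L \to M \to 0$ in $\G$ with $L$ pure injective. Under the equivalence $\G \simeq \Flat(A)$, pure exact sequences in $\G$ correspond to (honest) exact sequences in $\Flat(A)$, and the pure injective objects of $\G$ correspond to the cotorsion flat $A$-modules. The hypothesis therefore translates to an exact sequence $0 \to M \to L \to M \to 0$ in $\Modr A$ in which all three terms are flat and $L$ is cotorsion; and the $\Tor$ long exact sequence (applied to any test module against the flat rightmost $M$) shows that this sequence is in fact pure in $\Modr A$.

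I would then invoke Proposition~\ref{P:Fp-inj}(2) for the flat cotorsion pair $(\Flat,\Cot)$ in $\Modr A$. This cotorsion pair is hereditary, and its left class $\Flat$ is closed under pure epimorphic images (a pure quotient of a flat module is flat, by the same $\Tor$ long exact sequence). The proposition then forces the $\Cot$-periodic module $M$ to be cotorsion. Being both flat and cotorsion, $M$ corresponds, via the equivalence, to a pure injective object of $\G$, which is what we want.

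The only point that requires a little care — and in my view the main (still minor) obstacle — is that the functor ring $A$ is a ring with enough idempotents rather than a strictly unital ring, so one must confirm that Proposition~\ref{P:Fp-inj}(2) and the underlying Theorem~\ref{T:direct-limits} hold in that setting. Their proofs rely only on the standard $\Ext$/$\Tor$ formalism, the Eklof lemma, and the canonical presentation of direct limits, all of which carry over to rings with enough idempotents without modification; so no genuine difficulty is expected.
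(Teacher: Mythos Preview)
Your proposal is correct and follows essentially the same route as the paper: transfer to $\Modr A$ via the Representation Theorem, then use that a $\Cot$-periodic module is cotorsion, noting that the argument carries over to rings with enough idempotents. The paper phrases the last step via Theorem~\ref{T:cot-Fp}(2) (the sequence splits), while you invoke Proposition~\ref{P:Fp-inj}(2) directly to get that $M$ is cotorsion; these are the same idea, and your remark that the transferred sequence is pure in $\Modr A$ is harmless but unnecessary.
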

\begin{proof}
  Let $M$ be a pure $\PInj$-periodic module and
  $0\to M\to E\to M\to 0$ a pure exact sequence with $E$ pure
  injective. By using the Representation Theorem, we get an exact
  sequence in $\Modr A$,
  $0\to \overline{M}\to \overline{E}\to \overline{M}\to 0$, with
  $\overline{M}$ flat and $\overline{E}$ flat cotorsion. But then, by Theorem \ref{T:cot-Fp}(2) (whose argument is still valid for a category of unital modules over a ring with enough idempotents) the sequence
  splits, so $\overline{M}$ is flat cotorsion and therefore $M$ is
  pure injective.
\end{proof}

The equivalence between $(\G;\E)$ and $\Flat(A)$ takes pure acyclic
complexes in $\Ch({\mathcal G})$ (i.e. concatenation of conflations in
$(\mathcal G;\E)$) to acyclic complexes in $\Ch({A})$ with flat
cycles. We will denote by $\Ch(\G)_{{\rm dw}\textrm{-}{\rm pur}}$ the exact
category of unbounded chain complexes $\Ch(\G)$ with the degreewise
pure exact structure.

The following result ({\v{S}}{\v{t}}ov{\'{\i}}{\v{c}}ek \cite[Theorem
5.4]{Stopurity}) can be also easily proved by using the Representation
Theorem for finitely accessible additive categories and Theorem
\ref{T:dgcotdwcot} (which still holds for unital modules over a ring with enough idempotents).
\begin{cor}\label{C:Stov.Cot.Pair}
  Let $\widetilde{\mathrm{PurAc}}$ be the class of pure acyclic chain 
  complexes in $\Ch(\G)$. The pair of clasess
  $(\widetilde{\mathrm{PurAc}},\dw\PInj)$ is a complete hereditary
  cotorsion pair in $\Ch(\G)_{{\rm dw}\textrm{-}{\rm pur}}$.
\end{cor}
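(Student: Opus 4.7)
The plan is to transfer the statement across the Representation Theorem to the category $\Ch(\Flat(A))$ endowed with its degreewise exact structure, where $A$ is the functor ring of $\G$. Under the equivalence $(\G;\E)\simeq \Flat(A)$, an object of $\G$ is pure injective iff the corresponding flat $A$-module is cotorsion (the injectives of the canonical exact structure on $\Flat(A)$ are precisely the flat cotorsion modules, as also reflected in Corollary \ref{c:pInjperiodic.trivial}). Hence $\dw\PInj$ in $\Ch(\G)$ matches $\dw(\Flat\cap\Cot)$, while pure acyclic complexes in $\Ch(\G)$ match acyclic complexes of flats with flat cycles, i.e.\ $\widetilde{\Flat}$. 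The problem therefore reduces to proving that $(\widetilde{\Flat},\dw(\Flat\cap\Cot))$ is a complete hereditary cotorsion pair in $\Ch(\Flat(A))$ with its degreewise exact structure.

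To handle this reduced statement I would start from Gillespie's complete hereditary cotorsion pair $(\widetilde{\Flat},\dg\Cot)$ in $\Ch(A)$ and apply Theorem \ref{T:dgcotdwcot} to replace $\dg\Cot$ by $\dw\Cot$, so that $\dg\Cot\cap\dw\Flat=\dw(\Flat\cap\Cot)$. The key bridge between the two exact structures is the observation that, because $\Flat$ is closed under extensions and under kernels of epimorphisms between flats, any short exact sequence $0\to Y\to E\to X\to 0$ in $\Ch(A)$ with $X,Y\in \Ch(\Flat(A))$ automatically has $E\in \Ch(\Flat(A))$. Consequently $\Ext^1$ computed in $\Ch(A)$ agrees with $\Ext^1$ in the degreewise exact structure on $\Ch(\Flat(A))$ whenever both arguments are complexes of flats, so orthogonality and hereditariness transfer automatically from the Gillespie pair to the restricted pair.

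For completeness, given $X\in \Ch(\Flat(A))$, the enough-injectives approximation in $\Ch(A)$ yields $0\to X\to Y\to Z\to 0$ with $Y\in \dw\Cot$ and $Z\in\widetilde{\Flat}$; the flatness of $X$ and $Z$ forces $Y\in \dw\Flat$, hence $Y\in\dw(\Flat\cap\Cot)$. The enough-projectives approximation is handled symmetrically, and from it one also recovers ${}^{\perp}\dw(\Flat\cap\Cot)\cap \Ch(\Flat(A))=\widetilde{\Flat}$: if $X$ lies in the left orthogonal, the approximation $0\to Y'\to X'\to X\to 0$ with $Y'\in \dw(\Flat\cap\Cot)$ and $X'\in \widetilde{\Flat}$ splits, exhibiting $X$ as a summand of some $X'\in\widetilde{\Flat}$. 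The main obstacle is precisely the comparison of Ext groups and approximation sequences between $\Ch(A)$ and the exact category $\Ch(\Flat(A))$; once this comparison is set up, the entire cotorsion pair transports back along the Representation Theorem to the desired pair in $\Ch(\G)_{{\rm dw}\textrm{-}{\rm pur}}$.
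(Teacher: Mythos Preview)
Your proposal is correct and follows essentially the same route as the paper: reduce via the Representation Theorem to $\Ch(\Flat(A))$, start from Gillespie's pair $(\widetilde{\Flat},\dg\Cot)$ in $\Ch(A)$, invoke Theorem~\ref{T:dgcotdwcot} to replace $\dg\Cot$ by $\dw\Cot$, and restrict to complexes of flats. The only difference is that where the paper cites \cite[Corollary~7.5]{G7} to pass from the cotorsion pair in $\Ch(A)$ to the induced one in $\Ch(\Flat(A))$, you spell this restriction out by hand via the extension-closure and kernel-closure of $\Flat$; both arguments are valid and yield the same conclusion.
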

\begin{proof}
  We have the complete hereditary cotorsion pair
  $(\tilclass\Flat,\dg\Cot)$ in $\Ch(A)$. Then we have the induced
  complete hereditary cotorsion pair
  $(\tilclass\Flat,\dg\Cot\cap \dw\Flat)$ in $\Ch(\Flat)$ (see for
  instance \cite[Corollary 7.5]{G7}). Now, by Theorem
  \ref{T:dgcotdwcot}, $\dg\Cot=\dw\Cot$. Therefore the previous
  cotorsion pair is $(\tilclass\Flat,\dw\Cot\cap \dw\Flat)$. Now we use
  the Representation Theorem to get the complete hereditary cotorsion
  pair $(\widetilde{\mathrm{PurAc}}, \dw\PInj)$ in
  $\Ch(\G)_{{\rm dw}\textrm{-}{\rm pur}}$.
\end{proof}
\begin{rem}
For any complex $E$ of pure injective objects in $\G$ and any complex $A$, we have that $\Ext^1_{\Ch(\G)_{{\rm dw}\textrm{-}{\rm pur}}}(A,E)=0$  if and only if every map from $A\to \Sigma E$ is null-homotopic. Then it follows from Corollary \ref{C:Stov.Cot.Pair} that a complex $A$ is pure acyclic if and only if any map $A\to I$ is null-homotopic, where $I$ is a complex of pure injective objects in $\G$.
\end{rem}
\bibliographystyle{alpha} \bibliography{references}
\end{document}